\documentclass[11pt]{article}
\usepackage[latin1]{inputenc}
\usepackage{float}
\usepackage{multirow}
\usepackage{color}
\usepackage{amsmath,amssymb,amsthm,amsfonts,amstext,amsbsy,amscd}
\usepackage{mathrsfs}
\usepackage{array}
\usepackage{rotating}
\usepackage{enumitem}
 \usepackage{amssymb,amsmath,comment,graphicx}
\usepackage{multicol}
\usepackage{bbm}
\usepackage{faktor}
\usepackage{tikz,tkz-euclide}
\usepackage{epstopdf}
\usepackage{dsfont}
\usepackage{mwe} 
\usepackage[acronym]{glossaries}
\RequirePackage[authoryear]{natbib}
\usepackage{subcaption}

\newcommand{\rmd}{{\mathrm{d}}}
\newcommand{\I}[1]{\mathbbm{1}_{\{#1\}}}
\newcommand{\R}{\mathbb{R}}
\newcommand{\N}{\mathbb{N}}
\newcommand{\PP}{\mathbb{P}}
\newcommand{\V}{\mathbb{V}}
\newcommand{\corr}{\mathrm{corr}}
\newcommand{\Z}{\mathbb{Z}}
\renewcommand{\P}{\mathbb{P}}
\newcommand{\E}{\operatorname{\mathbb{E}}}
\newcommand{\Cov}{\mathrm{Cov}}

\newcommand{\ii}{{\mathbf i}}
\newcommand{\uu}{{\mathbf u}}

\newcommand{\jj}{{\mathbf j}}

\newcommand{\1}{{\mathbf 1}}
\newcommand{\be}{{\mathbf e}}
\newcommand{\vect}{{\rm vect}}
\newcommand{\0}{{\mathbf 0}}
\renewcommand{\hat}{\widehat}
\renewcommand{\bar}{\overline}
\newcommand{\eps}{\varepsilon}
\newcommand{\as}{\qquad\mathrm{\textit{a.s.}}}

\theoremstyle{plain}
\newtheorem{Theorem}{Theorem}[section]
\newtheorem{Corollary}{Corollary}[section]
\newtheorem{Lemma}{Lemma}[section]
\newtheorem{Proposition}{Proposition}[section]
\theoremstyle{definition}
\newtheorem{Definition}{Definition}[section]
\newtheorem{Remark}{Remark}
\newtheorem{Example}{Example}
\renewcommand{\hline}{------------------------------------------------------}

\usepackage{lscape} 
\usepackage{ifpdf}
\ifpdf
\else
\usepackage{graphicx}
\DeclareGraphicsExtensions{.eps,.jpg,.png}
\fi
\usepackage{epstopdf}

\parindent 0mm		

\usepackage[]{algorithmic}
\usepackage[0]{algorithm}

\floatname{algorithm}{}
\definecolor{gris}{gray}{0.5} 

\usepackage[bookmarks = true, colorlinks=true, linkcolor = blue, citecolor = blue, menucolor = black, urlcolor = black, plainpages=false]{hyperref}

\usepackage[left=2.1cm, top=2.1cm, right=2.1cm, nohead, foot=0.6cm]{geometry}

\setcounter{tocdepth}{2}

\parindent 0mm		

\begin{document} 
\title{Surface area and volume of excursion sets observed on  point cloud based polytopic tessellations}

\author{\sc Ryan Cotsakis\footnote{Universit\'{e} C\^{o}te d'Azur, CNRS  UMR  7351,   Laboratoire J.A. Dieudonn\'{e} (LJAD),   France},\,   Elena Di Bernardino$^{*}$  and C\'{e}line Duval\footnote{Universit\'e de Lille, CNRS  UMR 8524, Laboratoire Paul Painlev\'{e},  Lille, France}}
\date{}
\maketitle
\begin{abstract}
The excursion set of a $C^2$ smooth random field carries relevant information in its various geometric measures.
From a computational viewpoint, one never has access to the continuous observation of the excursion set, but rather to observations at discrete points in space.
It has been reported that for specific regular lattices of points in dimensions 2 and 3, the usual estimate of the surface area of the excursions remains biased even when the lattice becomes dense in the domain of observation.
In the present work, under the key assumptions of stationarity and isotropy, we demonstrate that this limiting bias is invariant to the locations of the observation points. Indeed, we identify an explicit formula for the bias, showing that it only depends on the spatial dimension $d$.
This enables us to define an unbiased estimator for the surface area of excursion sets that are approximated by general tessellations of polytopes in $\R^d$, including Poisson-Voronoi tessellations.
We also establish a joint  central limit theorem for the surface area and volume estimates of excursion sets observed over hypercubic lattices.

%\begin{sloppypar}\cel{Resume en fran\c cais \`a ne mettre que dans la version IHP:} La g\'eom\'etrie des ensembles d'excursion d'un champs al\'eatoire $C^{2}$ stationnaire   de $\R^{d}$ apporte de l'information sur le champs. En pratique ces ensembles d'excursion ne sont jamais  observ\'es \emph{contin\^{u}ment}  mais plut\^{o}t sur un \emph{nuage de point}  issu d'un maillage de $\R^{d}$. Il a \'et\'e mis en \'evidence dans la litt\'erature, sur des maillages particuliers et en dimensions 2 et 3, que la surface estim\'ee \`a partir de ces observations  est asymptotiquement biais\'ee, m\^{e}me lorsque que le maillage devient dense dans la fen\^{e}tre d'observation. Dans cet article nous identifions la formule de ce biais dans le cadre isotrope,  en toute dimension et pour des maillages g\'en\'eraux. Cette formule montre que ce biais d\'epend uniquement de la dimension $d$ et  ne d\'epend, dans une certaine mesure, pas de la g\'eom\'etrie du maillage consid\'er\'e. Cela  permet de d\'efinir un estimateur sans biais de la surface des ensembles d'excursion observ\'es sur une grille g\'en\'erale de $\R^{d}$. Nous \'etablissons aussi un th\'eor\`eme central limite \emph{joint} pour la surface et le volume estim\'e sur une grille issue d'un maillage d'hypercubes.\end{sloppypar}
\end{abstract}

\smallskip
{\bf Key words:}  Bias correction, Crofton formula, Crossing probabilities,  Excursion sets,    Geometric inference,   Joint Central Limit Theorem,          Lipschitz-Killing curvatures,  Surface area, Voronoi tessellations.

\smallskip
{\bf MSC Classification:}    {60D05}, {60G60}, {62R30}, {52A22}, {62H11}

\section{Introduction}
 
\subsection{Motivations}

The study of random fields through the geometry of their excursion sets has received a lot of interest in recent literature. This is mainly stimulated by their wide range of applications in domains such as cosmology, for  the study of Cosmic Microwave Background radiation and the distribution of galaxies (see, \emph{e.g.}, \citet{Planck},   \citet{SG}, \citet{GCPPM},  \citet{GHVK}), brain imaging (see \citet{AT11}, Section 5, and the references therein), and the modelling of sea waves (see, \emph{e.g.}, \citet{LH57},   \citet{Wsc85},  \citet{Lin00}).
The geometric features considered are referred to as either Lipschitz-Killing curvatures, intrinsic volumes or Minkowski functionals, depending on the literature. Many studies have been dedicated to computing these objects from the observation of one excursion set on a compact domain $T$ in $\R^d$ (see \textit{e.g.} \cite{AT07}), limit results when the size of $T$ grows to $\R^{d}$ have been established under specific conditions on the field (see \textit{e.g.} \cite{bulinski2010central}, \cite{bulinski2012central}, \cite{Kratz2018}, \cite{meschenmoser2013functional} or \cite{Spodarev13}) and these estimates have been successfully used to derive testing procedures (see \textit{e.g.} \cite{abaach:hal-03138682}, \cite{bierme2018lipschitz}, \cite{di2022statistics} or \cite{Berzin}).
In the present work, we focus on two specific additive measures of the excursion sets of a $C^{2}$ stationary  random field in $\R^{d}$: the surface area and the volume (see Section~\ref{sec:defsig} for their mathematical definition).

The main motivation of this work comes from the following ascertainment: many works rely on the assumption that ``\emph{the excursion set is observed on $T\subset\R^{d}$}'' which is meant to be understood as ``the field is \emph{continuously} observed over $T$''. This seems unrealistic as in practice, for instance in dimension 2 where excursion sets can be viewed as images,  they are encoded through a matrix whose entries make a one-to-one connection with the pixels of the image. Even in cases where the resolution of the image is very high, the image of the excursion set remains a discretization of the continuous object that is the excursion set on $T$.

 There exist estimators of the surface area and the volume taking into account the discrete nature of the observations. For the surface area, whose computation is more challenging, local counting algorithms are studied for example, in \cite{miller1999} and \cite{bierme:hal-02793752}.  Both studies consider specific two-dimensional regular lattices of points (square and hexagonal) and bring to light an interesting phenomenon: for each of the considered lattices, the expected surface area (in this case, perimeter length) of the discretized excursion set does not converge to that of the continuous excursion set; there is a bias factor of $4/\pi$ (see \cite{bierme:hal-02793752}, Proposition 5). In dimension 3, \cite{miller1999} observes a similar behavior, with a bias factor of $3/2$ for the cubic lattice. Interestingly, \cite{thale2016asymptotic} shows that in any dimension $d$, for an excursion set  observed on a Poisson-Voronoi mosaic, ``\emph{the surface area asymptotics involve a universal correction factor}''. However, this correction factor is not explicitly calculated in the aforementioned paper. In dimension 2, for a square lattice, a statistical estimating strategy  is developed in \cite{cotsakis2022perimeter} to avoid this asymptotic bias (see \cite{cotsakis2022perimeter}, Remark 3 for further details)  at the cost of an introduced hyperparameter.

The main result of this paper offers a general picture for the mean surface area of an excursion set that is approximated by convex polytopes in the following sense: for a general tessellation of polytopes in $\R^d$ (in the sense of Definition~\ref{def:lattice}), we provide an explicit formula for the limiting mean surface area of the polytopic region that approximates an excursion set as the scale of the tessellation is decreased. The limiting mean surface area of the approximated excursion set approaches a constant multiple of that of the true  excursion set; surprisingly, the constant is independent of the geometry of the tessellation, and only depends on the dimension $d$ (see Equation~\eqref{eqn:honeycomb_deterministic}). We compute the constant in every dimension (see Equations~\eqref{eqn:honeycomb_deterministic} and~\eqref{betad}) and identify it with the unspecified \emph{universal correction factor} mentioned in \cite{thale2016asymptotic} (see our Corollary~\ref{corollaryBiais}). 

Moreover, thanks to a second order expansion of this bias (see Theorem~\ref{prp:Pcrossing_Esurfacearea}), it is possible, to derive for a \emph{hypercubic} lattice a \emph{joint} central  limit theorem for the estimated surface area and volume (see Theorem~\ref{CLTjoint}) by imposing additional strong mixing assumptions on the  underlying  field. This limit result is novel among existing limit results since it gives the joint asymptotics of two \emph{different} Lipschitz-Killing curvatures, the surface area and the volume, whereas most multivariate limit theorems hold for a single curvature at multiple levels (see for instance Corollary~\ref{CLT}).\\

The outline of the paper is the following. In Section~\ref{sec:Crof} we define the geometric measures that we consider, as well as the Crofton formula, which is an essential tool to prove the main result (Theorem~\ref{prp:Pcrossing_Esurfacearea}). In Section~\ref{sec:defsig}, the surface area and volume as well as their corresponding estimators on general point clouds are introduced. Since the bias of the estimated {volume} is a well understood deterministic quantity that is asymptotically negligible, Section~\ref{sec:BiasSA} focuses on the study of the bias of the estimated {surface area}; the main results, which hold for general point clouds in any dimension (Theorems~\ref{prp:Pcrossing_Esurfacearea} and~\ref{thm:honeycomb}), are stated and proved. Section~\ref{sec:TCL} restricts to the hypercubic lattice and proposes under additional strongly-mixing assumptions the joint CLT (Theorem~\ref{CLTjoint}) for the estimated surface area and volume. Sections~\ref{prf:secSA} and~\ref{prf:secTCL} contain additional results and proofs related to Sections~\ref{sec:BiasSA} and~\ref{sec:TCL} respectively.  Finally, an Appendix Section includes a refined result for the variance of the estimated volume (see Section~\ref{QuasiAssociativitySection}), several examples (see Section~\ref{sec:annexeExamples}) and some considerations on alternative approaches to recover the dimensional constant  appearing in Theorem~\ref{prp:Pcrossing_Esurfacearea} and to approximate   the  surface area  (see Section~\ref{DimensionalConstantSQUARE}).

\subsection{Geometric measures and the Crofton formula\label{sec:Crof}}

In the following,   $\| \cdot \|_p$  denotes the $L_p$  norm; $\| \cdot \|_{\infty}$, the supremum norm;   $| \cdot |$,  the absolute value; $\mathbbm{1}_{A}$, the indicator of a set~$A$; and $\partial A$, the boundary of a set $A$. The closed ball of radius $r$ centered at the origin $\0$ in $\R^d$ is denoted $B^d_r$. Finally, recall that $(\be_{i})_{1\le i\le d}$ denotes the canonical basis of $\R^{d}$.

\paragraph{Hausdorff measures.}

We first introduce the different measures considered in this article.  For $k\in\{0,\ldots,d\}$, let $\sigma_k(B)$ be the $k$-dimensional Hausdorff measure of a measurable set $B \subset \R^d$,
\begin{align} \label{measures}
\sigma_{k}(B):= \frac{\pi^{k/2}}{2^{k}\Gamma\left(\frac k2 + 1\right)}\lim_{\delta \rightarrow 0} \sigma_{k}^{\delta}(B),
\end{align}
where $\Gamma$ denotes the gamma function and
\begin{align} \label{deltacovering}
\sigma_{k}^{\delta}(B) := \inf\left\{\sum_{i\in \N} \mbox{diam}(U_i)^{k} : \mbox{diam}(U_i) < \delta,\  \bigcup_{i=1}^\infty U_i \supseteq B \right\},
\end{align}
where diam$(U) :=\sup\{\|u-v\|_{2}, u,v\in U\}$ and the infimum is taken over all countable covers of $B$ by arbitrary subsets $U_{i}$ of $\R^{d}$ (see, \emph{e.g.}, \cite{Weil08} p.634). The Hausdorff dimension of $B$ is the unique integer value $ d_{B}$ such that $\sigma_{k}(B)=0$ if $k<d_{B}$ and $\sigma_{k}(B)=+\infty$ if $k>d_{B}$ (see, \emph{e.g.}, \cite{rogers1998hausdorff}).
We have chosen to normalize  $\sigma_{k}(B)$ in \eqref{measures}  such that for $k=\{0, \ldots, d\}$, $\sigma_k(B)$ corresponds to the $k$-dimensional Lebesgue measure of $B$.

In the present work we focus on the measures in~\eqref{measures} for   $k=d-1$ and $k= d$. They correspond respectively to what we call \textit{surface area} and \textit{volume} in dimension $d$. In the proofs, two other measures play an important role: $\sigma_{0}$, the counting measure for sets of isolated points,  and  $\sigma_{1}$, the measure of length.

\paragraph{Main tool: Crofton formula.}
In the present work,  we heavily  rely  on the Crofton formula. For a $k$-dimensional smooth object embedded in $\R^d$, this classical result in integral geometry relates the $\sigma_{k}$ measure of the object with the average number of times it is intersected by randomly oriented $(d-k)$-flats. We will be particularly interested in the case of $k=d-1$, in which the $1$-flats correspond to randomly oriented lines.\medskip

To  state the Crofton formula, we first need to introduce the affine Grassmanian $A(d,m)$, for $m\in\{1,...,d\}$, which is the set of affine $m$-dimensional subspaces of $\R^d$.  Since the set $A(d,1)$ of lines in $\R^{d}$ plays a crucial role in the Crofton formula  we  use, we propose a particular parametrization.

It is shown in \cite{Weil08} p.168 that $A(d,1)$ is equipped with a unique locally finite motion invariant measure $\mu_1$, that is normalized such that
\begin{equation*}
\mu_1(\{l\in A(d,1): l\cap B^d_1\neq \emptyset\}) = \sigma_{d-1}(B^{d-1}_1).
\end{equation*}
For $\mathbf{s}\in \partial B_1^d$ and $\mathbf{v}\in\vect(\mathbf{s}^\perp)$, denote by $l_{\mathbf{s},\mathbf{v}}$ the element of $A(d,1)$ that is parallel with $\mathbf{s}$ and passes through the point $\mathbf{v}$,
\begin{equation}\label{eqlines}
l_{\mathbf{s},\mathbf{v}}:=\{\mathbf{v}+\lambda\mathbf{s}:\lambda\in\R\}.
\end{equation}   For each $\mathbf{s}\in \partial B_1^d$, there exists a unitary (rotation) operator $\theta_\mathbf{s}$ that maps $\be_1$ to $\mathbf{s}$. Therefore, we define the parametrization $\varphi:\partial B^d_1 \times \R^{d-1}\rightarrow A(d,1)$ satisfying
$\varphi(\mathbf{s},\mathbf{u})=l_{\mathbf{s}, \mathbf{v_\mathbf{s}(u)}}$, with $\mathbf{v_\mathbf{s}(u)}:=\theta_\mathbf{s}\circ\begin{pmatrix}
0\\
\mathbf{u}
\end{pmatrix}\in\vect(\mathbf{s}^\perp)$.
Notice that for $E\subset A(d,1)$,
$$\mu_1(E) = \frac{(\sigma_{d-1}\otimes \sigma_{d-1})(\varphi^{-1}(E))}{\sigma_{d-1}(\partial B^d_1)},$$
where $\otimes$ denotes the product measure.\\

We recall here a particular version of the Crofton formula in Theorem~5.4.3 in \cite{Weil08}, which implies that for a manifold $M\subset \R^d$ satisfying $0 < \sigma_{d-1}(M) < \infty$, it holds that
\begin{equation}\label{eqn:crofton_appendix}
\sigma_{d-1}(M) = \frac{\sqrt{\pi}\ \Gamma(\frac{d+1}{2})}{\Gamma(\frac{d}{2})}
\int_{A(d,1)} \sigma_0(M\cap l)\ \mu_1(\rmd l).
\end{equation}
By writing $A(d,1)$ in terms of the above parametrization $\varphi$, Equation~\eqref{eqn:crofton_appendix} takes the form
\begin{equation}\label{eqn:crofton}
\sigma_{d-1}(M) = \frac{\sqrt{\pi}\ \Gamma(\frac{d+1}{2})}{\Gamma(\frac{d}{2})}
\int_{\R^{d-1}}\int_{\partial B^d_1} \frac{\sigma_0(M\cap l_{\mathbf{s},\mathbf{v_\mathbf{s}(u)}})}{\sigma_{d-1}(\partial B^d_1)}\ \rmd \mathbf{s}\ \rmd\mathbf{u}.
\end{equation}
A helpful interpretation of the Crofton formula in Equation~\eqref{eqn:crofton} is as follows: the expected $\sigma_{d-1}$ measure of the projection of $M$ on a $(d-1)$-dimensional hyperplane with uniformly random orientation is simply a constant  multiple   of $\sigma_{d-1}(M)$.\\

The Crofton formula can also be exploited to propose algorithms to compute the surface area, \textit{e.g.}, it is considered in \cite{lehmann2012efficient} for  objects in 2 and 3 dimension and  recently in  \cite{aaron2020surface} which provides consistent estimators for the surface area of a compact domain $S$ from the observation  of \emph{i.i.d.} random variables supported on $S$. The interested reader is referred to Appendix~\ref{sec:annexeExamples} for an illustration of Equation~\eqref{eqn:crofton} on two simple examples.

\subsection{Estimated volume and surface area of excursion sets observed over a point cloud\label{sec:defsig}}

\paragraph{Excursion sets and associated measures.} We now apply the previous $\sigma_{d}$ and $\sigma_{d-1}$ measures to specific manifolds: the excursion sets of $d$-dimensional smooth random fields.

\begin{Definition}[$\sigma_{d}$ and $\sigma_{d-1}$ measures of excursion sets] \label{defLK}
Let  $\{X(t), t \in \R^d\}$,  for  $d \geq 2$,  be a random field   satisfying the following assumption
\begin{enumerate}[label={($\mathcal{A}$0)}]
\item\label{A0}
$X$ is stationary with  positive finite variance and is almost surely twice differentiable. Furthermore, the probability density of $\big(X(\0),\nabla X(\0)\big)$ is bounded uniformly on $\R^{d+1}$.
\end{enumerate}
Let   $u \in \R$ and    $T \subset \R^d$  be a bounded  closed  hypercube with non empty interior.
 We consider the excursion set within $T$ above level $u$:
$$E^{T}_{X}(u):= \{t\in T\,:\,X(t)\ge u\}= T \cap E_{X}(u),\quad \mbox{where }E_{X}(u):= X^{-1}([u,+\infty)).$$
Similarly, the level curves within $T$ are defined by
\begin{center}
$L_X^T(u) := \{t\in T\,: \,X(t) = u\} = T \cap \partial E_{X}(u)$, \emph{a.s.}
\end{center}
Remark that Assumption~\ref{A0} guarantees that $X$ admits no critical points at the level $u$ almost surely, which implies that $L_X^T(u)$ is a $(d-1)$-dimensional manifold possessing a $\sigma_{d-1}$ measure with finite first and second moments (see, \textit{e.g.}, \cite{cabana1987affine}, Theorem 11.2.1 and Lemma 11.2.11 in \cite{AT07}).   In addition, since the considered random field $X$ is of class $C^2$ \textit{a.s.}, the random set $E_X(u)$ is a $C^2$ submanifold of $\R^d$ and its intersection with the compact, convex hypercube $T$ provides the positive reach property (see \cite{bierme2018lipschitz}). \\

Define the normalized $\sigma_{d}$ and $\sigma_{d-1}$ measures of the excursion set, for $u\in\R$, as
\begin{align}
C^T_{d-1}(u) : &=   \frac{1}{\sigma_d(T)} \sigma_{d-1}\big(L_X^T(u)\big) = \frac{1}{\sigma_d(T)}\int_{L^{T}_X(u)}  \sigma_{d-1}(\rmd s),  \label{eq:UestCT1bis}\\
C^T_d(u)  :& =  \frac{1}{\sigma_d(T)} \sigma_d\big(E_X^T(u)\big)= \frac{1}{\sigma_d(T)}\int_{T} \mathbbm{1}_{\{X(t)\ge u\}}	\rmd t.\label{eq:UestCT2}
 \end{align}
Assumption \ref{A0} guarantees the existence of the associated  densities
\begin{align} \label{defCast}
C_k^{*}(u) := \E[C^T_k(u)],~\mbox{ for } k=d, d-1,
\end{align}
which are independent of the size of the hypercube $T$. \end{Definition}
The independence of $C_d^{*}$ from the size of $T$ is trivially verified  using that  $X$ is stationary and  Fubini-Tonelli theorem which give immediately that the density of the normalised volume satisfies
\begin{equation} \label{areadensity}
C_d^{*}(u) = \frac{1}{\sigma_d(T)}\E\Big[\int_{T} \mathbbm{1}_{\{X(t)\ge u\}}	\rmd t\Big] = \PP(X(\0) \geq u).
\end{equation}
Furthermore, note that we consider in~\eqref{eq:UestCT1bis} the $\sigma_{d-1}$ measure of $L_{X}^{T}(u)=T \cap \partial E_{X}(u)$ and not of $\partial E^{T}_{X}(u)$. Therefore, from Definition  2.1  and Proposition 2.5  in  \cite{bierme2018lipschitz}, we get via  kinematic formulas that  $\E[C^T_{d-1}(u)]$ is equal to the surface area density. Indeed we do not add the artificial contribution of $\partial T$ to the level curves in Definition~\ref{defLK}. Notice that the density $C^*_{d-1}(u)$ can be   explicitly obtained for certain specific random fields. Two classical examples (the isotropic Gaussian  and chi-square  random  fields) are presented in Appendix~\ref{sec:annexeExamples} (see Example \ref{ModelsGaussianType1}). \\

The random quantities in~\eqref{eq:UestCT1bis}-\eqref{eq:UestCT2}  can only be used as estimators of $C^{*}_{d}(u)$ and $C^{*}_{d-1}(u)$ if we observe  the excursion set $E_{X}^{T}(u)$ on the whole domain $T$. In practice, or at least numerically,  images of excursion sets are not objects defined on all $T$ but discretely encrypted objects, \textit{i.e.}, for each point of a discrete  grid. Then,    quantities in~\eqref{eq:UestCT1bis}-\eqref{eq:UestCT2}  are never empirically accessible. In the remainder of this section we propose estimators of $C^{*}_{d}(u)$ and $C^{*}_{d-1}(u)$ from the observation of the excursion set on a general point cloud (\emph{i.e.}, based on the knowledge of which points fall in the excursion set).

\paragraph{Polytopic tessellations based on point clouds.}

For an arbitrary point cloud, we describe the set of tesselations of $\R^d$ that are permissible for the construction of our estimators of $C^{*}_{d}(u)$ and $C^{*}_{d-1}(u)$. 

\begin{Definition}\label{def:lattice}
Let $\mathcal{H}$ be a set of convex, closed polytopes that tessellates $\R^d$ in such a way that satisfies the following condition. To each $P\in \mathcal{H}$, one can assign a reference point $P^{\bullet}\in P$ such that for any two adjacent cells $P_1,P_2\in\mathcal{H}$, the intersection of their boundaries  is normal to the vector spanned between $P_1^\bullet$ and $P_2^\bullet$. We say that $\mathcal{H}$ is \textit{point-referenceable}, and that the set of pairs $\dot{\mathcal{H}} = \{(P,P^\bullet):P\in\mathcal{H}\}$ is a \textit{point-referenced $d$-honeycomb}. Let $\mathfrak{T}^d$ denote the space of point-referenceable $d$-honeycombs, so that $\mathcal{H}\in\mathfrak{T}^d$.
\end{Definition}

Recall that the interiors of the polytopes in a tessellation do not intersect, \emph{i.e.}, $\sigma_{d}(P_{1}\cap P_{2})=0$, for any different $P_1,P_2\in\mathcal{H}$. However, if two polytopes $P_1$ and $P_2$ are \textit{adjacent} in $\mathcal{H}$, then $\sigma_{d-1}(P_1\cap P_2)>0$.

\begin{Remark}
The set of point-referenceable $d$-honeycombs $\mathfrak{T}^d$ in Definition~\ref{def:lattice} contains a wide variety of  tessellations. For example, the Voronoi diagram of any point cloud in $\R^d$ is in $\mathfrak{T}^d$. It contains also $d$-honeycombs that cannot be realised as a Voronoi diagram as, for example,  the Archimedean tessellations   or the  tiling of $\R^2$ in Figure~\ref{fig:lattice} (a). Simple examples of tessellations of $\R^2$ that are not in $\mathfrak{T}^2$ are Pythagorean tilings and, of course, tilings with curved tiles.
\end{Remark}

Figure~\ref{fig:lattice} provides some examples of point-referenceable tessellations of $\R^2$. Notice that regular triangular and regular hexagonal lattices can be seen as limiting cases of the tessellation in Figure~\ref{fig:lattice}   (a); furthermore, as depicted, it is not a Voronoi tessellation. Figure~\ref{fig:lattice} (b) represents the well-known  square lattice, and panel (c) depicts the Voronoi tessellation of an arbitrary point cloud.

\begin{figure}
[H]
  \centering
  \hspace{0.5cm}
  \begin{subfigure}{0.25\textwidth}
    \includegraphics[width=\linewidth]{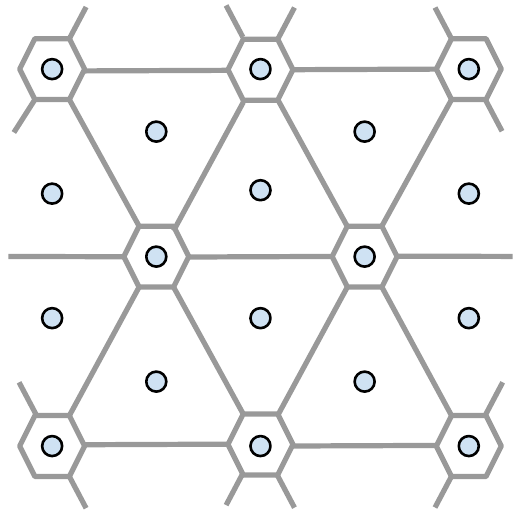}
    \caption{Hexagones and     truncated triangles lattice} \label{fig:1a}
  \end{subfigure}
  \hspace*{\fill}    
  \begin{subfigure}{0.23\textwidth}
    \includegraphics[width=\linewidth]{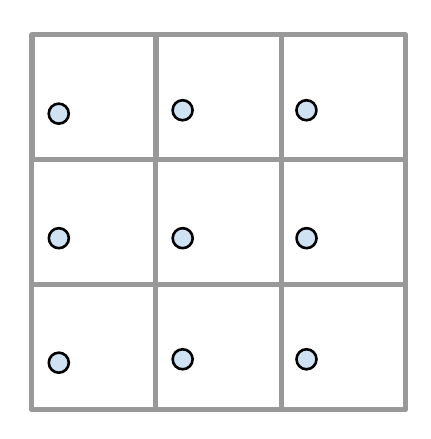}
    \caption{Square lattice} \label{fig:1b}
  \end{subfigure}
  \hspace*{\fill}    
  \begin{subfigure}{0.285\textwidth}
    \includegraphics[width=\linewidth]{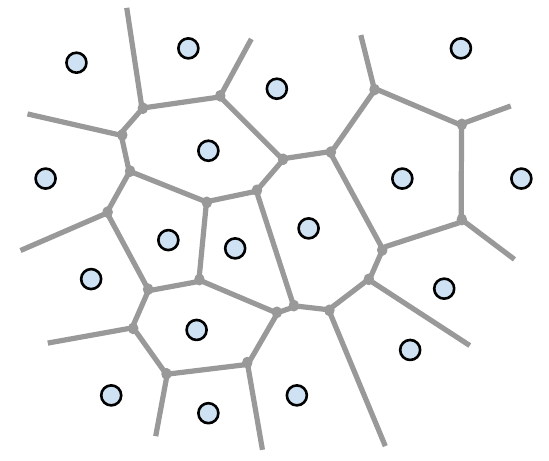}
    \caption{Voronoi tessellation} \label{fig:1c}
  \end{subfigure}\hspace{0.5cm}

\caption{Three examples of tilings in $\mathfrak{T}^2$ (see Definition~\ref{def:lattice})  with a particular choice of reference points shown as light-blue circles. The resulting structures are  point-referenced 2-honeycombs.}\label{fig:lattice}
\end{figure}

\paragraph{Estimators for $C_{d}^{T}(u)$ and $C^{T}_{d-1}(u)$.}
For a given point cloud in $\R^d$, that can be random, it is always possible to construct a point-referenced $d$-honeycomb (see Definition~\ref{def:lattice}) with the point cloud as its reference points (take the Voronoi diagram, for example). Thus, for each element of $\mathfrak{T}^d$ there is at least one corresponding point cloud, and to each point cloud in $\R^d$ there is at least one corresponding element of $\mathfrak{T}^d$.
We propose the following estimators for the quantities in~\eqref{eq:UestCT1bis} and~\eqref{eq:UestCT2} based on the knowledge of at which points in the point cloud the random field $X$ exceeds the level $u$.

\begin{Definition}\label{def:estGenTil} Let $T\subset \R^d$ be a compact domain  with non empty interior.  Let $\mathcal{H}\in \mathfrak{T}^d$, and let $\dot{\mathcal{H}} = \{(P,P^\bullet):P\in\mathcal{H}\}$ be a corresponding point-referenced $d$-honeycomb in the sense of Definition~\ref{def:lattice}.
Let $\mathcal{H}^T\subset \mathcal{H}$ be the set of polytopes $P$ in $\mathcal{H}$ such that $P\subseteq T$. We define  an estimator of $C_{d-1}^{T}(u)$   in~\eqref{eq:UestCT1bis} as
\begin{align}\label{GeneralLatticeEstimaor}
\widehat C^{(\dot{\mathcal{H}},T)}_{d-1}(u) &:= \frac{1}{\sigma_d(T)}\underset{P_1\neq P_2}{\sum_{P_1,P_2\in\mathcal{H}^T}}\sigma_{d-1}(P_1\cap P_2)\I{X(P_1^\bullet) \leq u < X(P_2^\bullet)}
\end{align}
and  of $C_{d}^{T}(u)$  in~\eqref{eq:UestCT2} as
\begin{equation}\label{eqCdHoneycomb}
\widehat C^{(\dot{\mathcal{H}},T)}_{d}(u) := \frac{1}{\sigma_d(T)}\sum_{P\in \mathcal{H}^T}\sigma_{d}(P)\I{X(P^\bullet) \geq u}.
\end{equation}
\end{Definition}

Computing the quantities in Definition~\ref{def:estGenTil} only requires the knowledge of the excursion set $E^{T}_{X}(u)$ on the reference points of a point-referenced $d$-honeycomb \textit{i.e.} a black and white image indicating at which points $P^{\bullet}$, for $P\in \mathcal{H}^{T}$, the field is above level $u$. In Equation~\eqref{GeneralLatticeEstimaor}, since the role of $P_{1}$ and $P_{2}$ is symmetric, both $\I{X(P_1^\bullet) \leq  u < X(P_2^\bullet)}$ and $\I{X(P_2^\bullet) \leq  u < X(P_1^\bullet)}$ are evaluated in the sum. Notice that it is crucial that the cells $P\in\mathcal{H}$ are closed to ensure that for adjacent cells $\sigma_{d-1}(P_{1}\cap P_{2})>0.$  Figure~\ref{fig:blue_paths} provides an illustration of the behaviour of the estimator in~\eqref{GeneralLatticeEstimaor} for the point-referenced 2-honeycombs in Figure \ref{fig:lattice}.

\begin{figure}[ht!]
    \centering
    \includegraphics[width=0.8\textwidth]{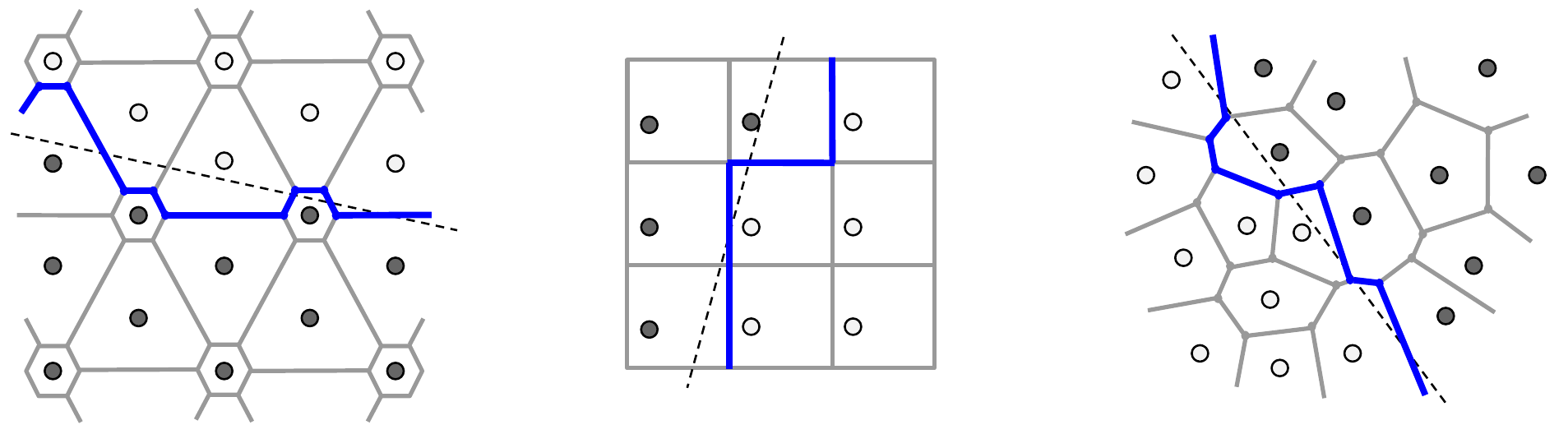}
    \caption{ \label{fig:blue_paths}
    For each of the point-referenced 2-honeycombs in Figure~\ref{fig:lattice}, a portion of the  level curve $\{X=u\}$ is illustrated as a dotted line. The estimator $\widehat C^{(\dot{\mathcal{H}},T)}_{d-1}(u)$  in~\eqref{GeneralLatticeEstimaor}  corresponds to the length of the resulting blue curve that separates the reference points in the same way as the curve $\{X=u\}$.}
\end{figure}

It is easy to see that  $\widehat C^{(\dot{\mathcal{H}},T)}_{d}(u)$ in~\eqref{eqCdHoneycomb}  is, in general, biased. Indeed, it follows from the stationarity in Assumption~\ref{A0} that
\begin{align}\label{eqbiaisarea}
\E[\widehat C^{(\dot{\mathcal{H}},T)}_{d}(u)]=\frac{\sigma_d\Big(\bigcup_{P\in\mathcal{H}^T}P\Big)}{\sigma_{d}(T)}\,C^{*}_{d}(u).
\end{align} Nevertheless, the bias factor ${\sigma_d\big(\bigcup_{P\in\mathcal{H}^T}P\big)}/{\sigma_{d}(T)}\leq 1$ is deterministic and it only relies on the structure of the point-referenced $d$-honeycomb. It is nearly unity if
$\sup\{\mathrm{diam}(P):P\in\mathcal{H}\} \ll \mathrm{diam}(T)$ and it is exactly unity for tessellations that satisfy $\bigcup_{P\in\mathcal{H}^T}P = T$. This can be easily obtained for simple tessellations such as the hypercubic one,  specifically described in Section~\ref{hypercubic:section}.
This is why we focus our attention to the bias  in~\eqref{GeneralLatticeEstimaor} for the $(d-1)$-volume of $L_X^T(u)$. In this case, the bias does not approach unity as the tile sizes become negligibly small with regards to the size of $T$, as previously observed \textit{e.g.} in \cite{bierme:hal-02793752} and \cite{miller1999}  for some periodic tessellations and in \cite{thale2016asymptotic} for Poisson-Voronoi mosaics. Explicit calculations of the bias have been made for square and hexagonal tilings in dimension 2 (\cite{bierme:hal-02793752}) and cubic lattices in dimension 3 (\cite{miller1999}). The approach in \cite{miller1999} is adapted to the $d$-dimensional hypercubic setting in Appendix \ref{DimensionalConstantSQUARE} (see Equation \eqref{constantbetad}).

Our Theorem~\ref{thm:honeycomb} provides a general formula for this limiting bias that holds in arbitrary dimension $d$, for a large subset of point-referenced $d$-honeycombs.

\section{Bias of the estimated surface area\label{sec:BiasSA}}

\paragraph{Assumptions.}

To control the bias of the estimated surface area we require additional sufficient assumptions   related to the regularity of the derivatives of $X$ in univariate directions.
\begin{enumerate}[label={($\mathcal{A}$1)}]
\item\label{A1}  Fix $\mathbf{w}\in\partial B_1^d$.
Assume   that $Y:=\{X(t \mathbf{w}), \, t \in \R\}$   is such that, $Y(0)\big|\{Y'(0)=0\}$ has a bounded density  and $\|Y''\|_{\infty,[0,1]} := \sup_{t\in[0,1]}|Y''(t)|\in L^2$.
\end{enumerate}

\begin{enumerate}[label={($\mathcal{A}$2)}]
\item\label{A2}
Let $L_X(u) := X^{-1}(\{u\})$, $\mathbf{w}\in  \partial B_1^d$ and $\eps\in(0,1)$. Assume that  $\E[ \sigma_0(L_X(u) \cap l_{\mathbf w,\mathbf{0}} \cap [0,1]^d)^{1/\eps}]$ is finite, where $ l_{\mathbf w,\mathbf{0}}$ is defined in~\eqref{eqlines}.
\end{enumerate}
Note that if the random field $X$ is isotropic, the vector $\mathbf{w}\in  \partial B_1^d$ appearing in~\ref{A1} and~\ref{A2} can be chosen arbitrarily.   Assumption~\ref{A1} permits to  apply Lemma~\ref{lem:1D_small_sojourn}  to the unidirectional  process $Y$.  The aforementioned lemma   controls the probability that a one-dimensional field crosses a level $u$ more than once in a small time interval. Assumption~\ref{A2} is technical and allows for a H\"older control in the proof of Theorem \ref{prp:Pcrossing_Esurfacearea}.

\paragraph{A first result on crossings.}

The following key result provides a first order approximation of the probability of crossings for the isotropic random field  $X$ in a given arbitrary direction. This result plays a central role in deriving a formula for the expected surface area of the limiting approximation of an excursion set by elements of a point-referenceable $d$-honeycomb in the sense of Definition~\ref{def:lattice} (see Theorem~\ref{thm:honeycomb} below).

\begin{Theorem}\label{prp:Pcrossing_Esurfacearea}
Consider a scalar $q\in\R^+$ and a fixed $\mathbf{w}\in\partial B^d_1$. Let $X$ be an isotropic random field satisfying Assumption~\ref{A0}.
Then for any fixed  $u\in\R$,
\begin{equation}\label{eqn:first_item_thm2.1}
\lim_{q\to 0}\frac{1}{q}\P\big(X(\mathbf{0}) \leq u < X( q\mathbf{w})\big) = \frac{C_{d-1}^*(u)}{\beta_d},
\end{equation}
where \begin{align}\label{betad}
\beta_d = \frac{2\sqrt{\pi}\ \Gamma(\frac{d+1}{2})}{\Gamma(\frac{d}{2})}
\end{align}
is a dimensional constant and the limit in~\eqref{eqn:first_item_thm2.1} is approached from below. Moreover, if $X$ also satisfies assumptions~\ref{A1} and~\ref{A2} for $\mathbf{w}$ and some  $\eps\in(0,1)$, then  there exists a constant $K\in\R^+$ such that for all $q\in\R^+$,
\begin{equation}\label{eqn:second_item_thm2.1}
0\le \frac{C_{d-1}^*(u)}{\beta_d}q - \P\big(X(\mathbf{0}) \leq u < X( q\mathbf{w})\big)  \leq  K q^{2-\eps}.
\end{equation}
\end{Theorem}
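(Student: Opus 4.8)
The plan is to compute $\P\big(X(\mathbf 0)\le u<X(q\mathbf w)\big)$ by conditioning on the behaviour of the unidirectional process $Y(t):=X(t\mathbf w)$, $t\in[0,q]$, and extracting the leading order in $q$ via the Crofton formula~\eqref{eqn:crofton}. First I would observe that the event $\{Y(0)\le u<Y(q)\}$ forces $Y$ to cross the level $u$ an odd number of times in $[0,q]$; write
$$
\P\big(Y(0)\le u<Y(q)\big)=\P\big(\sigma_0(L_X(u)\cap l_{\mathbf w,\mathbf 0}\cap[0,q])=1,\ Y(0)\le u\big)+R(q),
$$
where $R(q)$ collects the contributions from three or more crossings. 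For the main term, a single upcrossing in $[0,q]$ with $Y(0)\le u$ means $Y(0)\in[u-Y'(0^+)q+o(q),u]$ roughly; making this precise with a first-order Taylor expansion $Y(t)=Y(0)+tY'(0)+O(t^2\|Y''\|_{\infty,[0,q]})$ and the Kac–Rice / area-formula heuristic, one gets
$$
\frac1q\,\P\big(\sigma_0(\cdots)=1,\ Y(0)\le u\big)\ \xrightarrow[q\to0]{}\ \E\big[|Y'(0)|\,\mathbbm 1\{Y'(0)>0\}\,\big|\,Y(0)=u\big]\,p_{Y(0)}(u),
$$
which is half the expected number of crossings per unit length of the level $u$ by $Y$. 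I would then identify this half-intensity: by isotropy it is independent of $\mathbf w$, and integrating it against $\mu_1$ over all lines and using Crofton~\eqref{eqn:crofton} relating $\int \sigma_0(L_X^T(u)\cap l)\,\mu_1(\mathrm d l)$ to $\sigma_{d-1}(L_X^T(u))$ yields exactly $C_{d-1}^*(u)/\beta_d$ with $\beta_d=2\sqrt\pi\,\Gamma(\tfrac{d+1}2)/\Gamma(\tfrac d2)$ — the factor $2$ coming from counting only upcrossings rather than all crossings. The fact that the limit is approached from below follows because $\P(Y(0)\le u<Y(q))$ is bounded above by the main single-crossing term (each sample path with $Y(0)\le u<Y(q)$ contributes at least one upcrossing, and the linearized estimate over-counts the target probability — more carefully, $\P(Y(0)\le u<Y(q))\le\tfrac12\E[\sigma_0(L_X(u)\cap l_{\mathbf w,\mathbf 0}\cap[0,q])]$, whose limit is exactly the claimed value).

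For the quantitative bound~\eqref{eqn:second_item_thm2.1}, I would control two error sources. The first is the deviation of $Y$ from its tangent line: on the event that $Y$ has exactly one crossing, the set of admissible values of $Y(0)$ differs from the interval of length $qY'(0)$ by an amount $O(q^2\|Y''\|_{\infty,[0,q]})$, and since $Y(0)\mid\{Y'(0)=0\}$ (hence $Y(0)$ near a crossing) has bounded density by~\ref{A1}, and $\|Y''\|_{\infty,[0,1]}\in L^2$, this contributes $O(q^2)$ after taking expectations (using Cauchy–Schwarz on $\|Y''\|_{\infty}$ against the indicator). The second source is the multiple-crossing remainder $R(q)$: here I would invoke Lemma~\ref{lem:1D_small_sojourn} (whose hypotheses are exactly~\ref{A1}) to bound $\P(\sigma_0(L_X(u)\cap l_{\mathbf w,\mathbf 0}\cap[0,q])\ge 3)$, and combine it with a H\"older argument using~\ref{A2}: writing $\sigma_0(\cdots)\ge 3$ implies a bound like $\E[\sigma_0\mathbbm 1\{\sigma_0\ge 3\}]\le \E[\sigma_0^{1/\eps}]^{\eps}\,\P(\sigma_0\ge3)^{1-\eps}$, and if Lemma~\ref{lem:1D_small_sojourn} gives $\P(\sigma_0\ge 3\text{ in }[0,q])=O(q^2)$ then the remainder is $O(q^{2(1-\eps)})$ — one tunes the exponents so the total is $O(q^{2-\eps})$. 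The main obstacle I anticipate is making the single-crossing leading-term analysis rigorous enough to get the $O(q^{2-\eps})$ rate rather than merely $o(q)$: the naive Kac–Rice computation gives the limit but not a rate, so one has to work with the explicit finite-$q$ event, split it according to the number of crossings, and carefully track how the tangent-line approximation error interacts with the (bounded) conditional density of $Y(0)$ — this is where assumptions~\ref{A1} and~\ref{A2} are genuinely used, and where the H\"older trick with $\eps$ enters to absorb the integrability of $\|Y''\|_\infty$ and of $\sigma_0$.
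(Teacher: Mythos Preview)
Your approach is genuinely different from the paper's, and it is worth comparing the two.

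\textbf{The paper's route.} The paper does \emph{not} work on the short segment $[0,q]$ at all. Instead it applies the Crofton formula to $L_X(u)\cap B_1^d$, uses isotropy to reduce the line integral to a single direction $\mathbf w$, and then discretizes each chord $\bar l_{\mathbf w,\mathbf v}\subset B_1^d$ (of fixed length $\le 2$) with spacing $q$. The key observation is that the discretized crossing count
\[
\sum_{j=0}^{n_q-1}\big(\I{X(\mathbf x_j)\le u<X(\mathbf x_{j+1})}+\I{X(\mathbf x_{j+1})\le u<X(\mathbf x_j)}\big)
\]
increases monotonically a.s.\ to $\sigma_0(L_X(u)\cap\bar l_{\mathbf w,\mathbf v})$ as $q\to 0$. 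Taking expectations, stationarity gives $2n_q\,\P(X(\0)\le u<X(q\mathbf w))\nearrow \E[\sigma_0(\cdots)]$; integrating in $\mathbf v$ and using $\int \sigma_1(\bar l_{\mathbf w,\mathbf v})\,\rmd\mathbf v=\sigma_d(B_1^d)$ yields~\eqref{eqn:first_item_thm2.1} directly from Crofton, with no Kac--Rice or density computation. The ``approach from below'' is then obtained by a short subdivision-and-contradiction argument. For the rate, the paper bounds $\P(A)$, where $A$ is the event that the discretized count underestimates the true count, by $O(q)$ (there are $n_q\asymp 1/q$ sub-intervals, each with double-crossing probability $O(q^2)$ by Lemma~\ref{lem:1D_small_sojourn}, plus an $O(q)$ boundary term); then H\"older with~\ref{A2} on the \emph{fixed-length} chord gives the $O(q^{1-\eps})$ gap, which becomes $O(q^{2-\eps})$ after dividing through by $n_q$.

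\textbf{What each approach buys.} Your upper bound $\P(Y(0)\le u<Y(q))\le \tfrac12\E[\sigma_0(L_X(u)\cap l_{\mathbf w,\0}\cap[0,q])]$ is arguably cleaner than the paper's contradiction argument for the ``from below'' claim. On the other hand, the paper's monotone-convergence route yields~\eqref{eqn:first_item_thm2.1} under~\ref{A0} alone with no Kac--Rice input; your sketch relies on identifying the half-intensity via a Rice-type computation, which you yourself call a ``heuristic'' and which (as the paper notes around Proposition~\ref{LeadbetterTheorem}) is delicate to justify at the level of densities. You would need to invoke the 1D Rice formula under~\ref{A0} and then relate the directional crossing intensity to $C^*_{d-1}(u)$ via Crofton --- doable, but more machinery than the discretization method.

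\textbf{A concrete gap in the rate.} Working directly on $[0,q]$, your H\"older step gives
\[
\E\big[\sigma_0\,\mathbbm 1\{\sigma_0\ge 2\}\big]\le \big(\E[\sigma_0^{1/\eps}]\big)^{\eps}\,\P(\sigma_0\ge 2)^{1-\eps}=O(1)\cdot O(q^{2(1-\eps)}),
\]
since~\ref{A2} only controls $\E[\sigma_0^{1/\eps}]$ on $[0,1]$ (hence $O(1)$, not $O(q)$, on $[0,q]$). This yields $O(q^{2-2\eps})$, not the claimed $O(q^{2-\eps})$; your remark that ``one tunes the exponents'' does not close this gap without assuming~\ref{A2} for a smaller $\eps$. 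The paper avoids this loss precisely by subdividing a \emph{fixed-length} chord: then $\P(A)=O(q)$ rather than $O(q^2)$, but the moment factor $\E[\sigma_0^{1/\eps}]^\eps$ stays $O(1)$, and the extra factor of $q$ is recovered when dividing by $n_q\asymp 1/q$.
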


The proof of Theorem~\ref{prp:Pcrossing_Esurfacearea} is based on the Crofton formula in~\eqref{eqn:crofton}.
\begin{proof}[Proof of Theorem~\ref{prp:Pcrossing_Esurfacearea}]
Firstly, we prove Equation~\eqref{eqn:first_item_thm2.1}.
We apply Equation~\eqref{eqn:crofton} to the $(d-1)$-manifold $L_X(u)\cap B_1^d$,
\begin{equation}\label{eqn:crofton_3}
\sigma_{d-1}\big(L_X(u)\cap B_1^d\big) = \frac{\sqrt{\pi}\ \Gamma(\frac{d+1}{2})}{\Gamma(\frac{d}{2})}
\int_{\partial B^d_1}\int_{\vect(\mathbf{s}^\perp)} \frac{\sigma_0(L_X(u)\cap B^d_1 \cap l_{\mathbf{s},\mathbf{v}})}{\sigma_{d-1}(\partial B^d_1)}\ \rmd \mathbf{v}\ \rmd\mathbf{s},
\end{equation}
with $l_{\mathbf{s},\mathbf{v}}$ as  in~\eqref{eqlines}.
The quantity in~\eqref{eqn:crofton_3} is a positive random variable in $L^1$, and thus by taking the expectation, we get
\begin{equation*} 
\E\big[\sigma_{d-1}\big(L_X(u)\cap B_1^d\big)\big] = \frac{\sqrt{\pi}\ \Gamma(\frac{d+1}{2})}{\Gamma(\frac{d}{2})}
\int_{\partial B^d_1}\int_{\vect(\mathbf{s}^\perp)} \frac{\E\big[\sigma_0(L_X(u)\cap B^d_1 \cap l_{\mathbf{s},\mathbf{v}})\big]}{\sigma_{d-1}(\partial B^d_1)}\ \rmd \mathbf{v}\ \rmd\mathbf{s}.
\end{equation*}
By the isotropy of $X$, we obtain
\begin{equation}\label{eqn:crofton_final}
\E\big[\sigma_{d-1}\big(L_X(u)\cap B_1^d\big)\big] = \frac{\sqrt{\pi}\ \Gamma(\frac{d+1}{2})}{\Gamma(\frac{d}{2})}
\int_{\vect(\mathbf{w}^\perp)}\E\big[\sigma_0(L_X(u)\cap B^d_1 \cap l_{\mathbf{w},\mathbf{v}})\big]\ \rmd\mathbf{v},
\end{equation}
for $l_{\mathbf{w},\mathbf{v}}$ as  in~\eqref{eqlines}.

For $\mathbf{v}\in \vect(\mathbf{w}^\perp)\cap B_1^d$, define $\bar{l_{\mathbf{w},\mathbf{v}}} := B^d_1\cap l_{\mathbf{w},\mathbf{v}}$. Then $\bar{l_{\mathbf{w},\mathbf{v}}}$ either contains a single point, or it is a line segment in $\R^d$ with orientation $\mathbf{w}$.
Define
$$n_q = \bigg\lfloor\frac{\sigma_1(\bar{l_{\mathbf{w},\mathbf{v}}})}{q}\bigg\rfloor,$$
where $\lfloor\cdot\rfloor$ denotes the floor function.
Define $\mathbf{x}_0 := \textbf{v} - \frac{\sigma_1(\bar{l_{\mathbf{w},\mathbf{v}}})}{2}\textbf{w}$  and $\mathbf{x}^0 := \textbf{v} + \frac{\sigma_1(\bar{l_{\mathbf{w},\mathbf{v}}})}{2}\textbf{w}$ to be the two endpoints of the line segment
$\bar{l_{\mathbf{w},\mathbf{v}}}$. Figure~\ref{fig:grassmanian} provides a graphical visualisation to aid this construction.

\begin{figure}[ht!]
    \centering
    \includegraphics[width=0.3\textwidth]{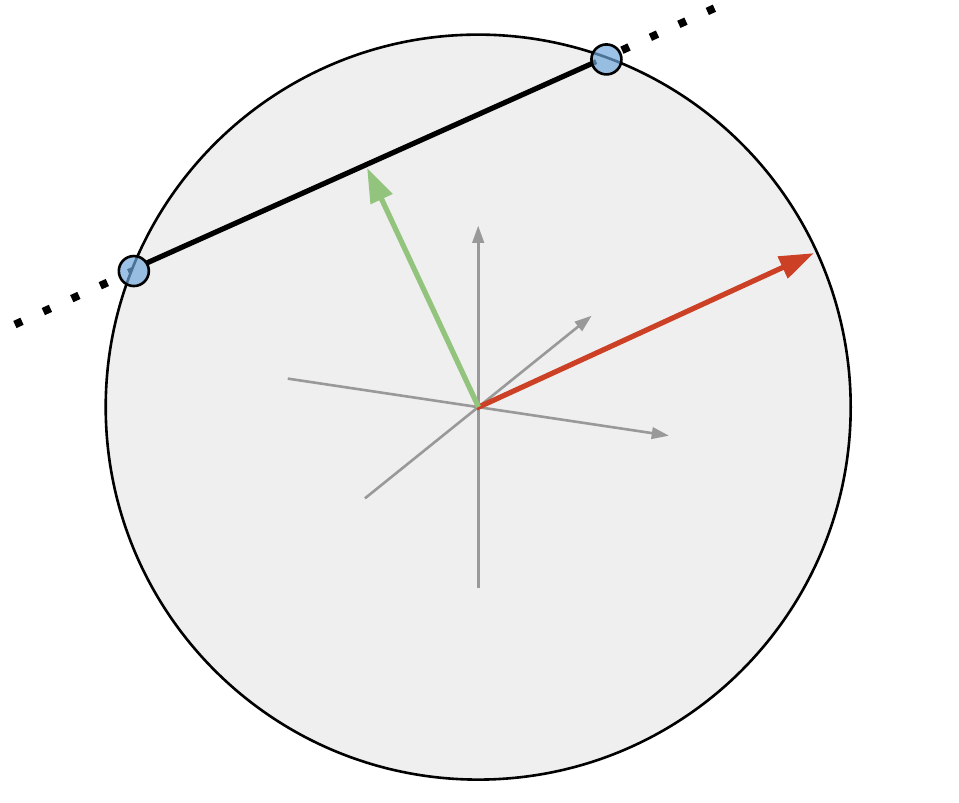}
    \put(-44,66){$\textbf{w}$}
    \put(-95,79){$\textbf{v}$}
    \put(-125,70){$\textbf{x}_0$}
    \put(-58,100){$\textbf{x}^0$}
    \caption{In dimension 3, the unit ball is represented; the red vector is a possible $\mathbf{w}\in \partial B_1^3$ and the green vector, a possible $\mathbf{v}\in\vect(\mathbf{w}^\perp)\cap B_1^3$. The two endpoints $\mathbf{x}_0$ and $\textbf{x}^0$, marked in blue, describe the segment $\bar{l_{\mathbf{w},\mathbf{v}}}$.} \label{fig:grassmanian}
\end{figure}

Now define $\mathbf{x}_j := \mathbf{x}_0+jq\mathbf{w}$ for $j\in \{0,...,n_q\}$, each of which being an element of $\bar{l_{\mathbf{w},\mathbf{v}}}$.
 With this construction, it follows that
\begin{equation}\label{eqn:as_to_chi}
\sum_{j=0}^{n_q-1}\big( \I{X(\mathbf{x}_j) \leq u < X(\mathbf{x}_{j+1})} + \I{X(\mathbf{x}_j) > u \geq X(\mathbf{x}_{j+1})}\big)
\stackrel{\mathrm{a.s.}}{\longrightarrow} \sigma_0(L_X(u)\cap \bar{l_{\mathbf{w},\mathbf{v}}}),
\end{equation}
as $q \rightarrow 0$.
The left-hand side of~\eqref{eqn:as_to_chi} is a count of the number of elements in $L_X(u)\cap \bar{l_{\mathbf{w},\mathbf{v}}}$, which approaches the exact value from below almost surely. This construction is well known in the literature as the \emph{discretization method}. The interested reader is referred for instance to \citet{KratzSurvey06} (Section 2.1) and references therein.
In particular, by using the stationarity of $X$ in Assumption~\ref{A0}, the monotonicity of the convergence in~\eqref{eqn:as_to_chi} implies
$$2n_q\P\big(X(\0) \leq u < X(\textbf{w}q)\big)\underset{q\rightarrow 0}{\longrightarrow} \E[\sigma_0(L_X(u)\cap \bar{l_{\mathbf{w},\mathbf{v}}})].$$
Moreover, this convergence is uniform for $\textbf{v}\in\vect(\textbf{w}^\perp)$, so
$$\P\big(X(\0) \leq u < X(\textbf{w}q)\big)\int_{\vect(\textbf{w}^\perp)\cap B_1^d}2n_q\rmd \textbf{v}\underset{q\rightarrow 0}{\longrightarrow} \int_{\vect(\textbf{w}^\perp)}\E[\sigma_0(L_X(u)\cap \bar{l_{\mathbf{w},\mathbf{v}}})]\rmd \textbf{v}.$$
By~\eqref{eqn:crofton_final}, this simplifies to
$$\P\big(X(\0) \leq u < X(\textbf{w}q)\big)\int_{\vect(\textbf{w}^\perp)\cap B_1^d} n_q\rmd \textbf{v} \underset{q\rightarrow 0}{\longrightarrow} \frac{1}{\beta_d}\E\big[\sigma_{d-1}\big(L_X(u)\cap B_1^d\big)\big].$$
Since $0 \leq \sigma_1(\bar{l_{\mathbf{w},\mathbf{v}}})/q - n_q < 1$
 and $\P\big(X(\0) \leq u < X(\textbf{w}q)\big)\underset{q\rightarrow 0}{\longrightarrow}0$, it holds that
$$\P\big(X(\0) \leq u < X(\textbf{w}q)\big)\int_{\vect(\textbf{w}^\perp)\cap B_1^d} \frac{\sigma_1(\bar{l_{\mathbf{w},\mathbf{v}}})}q\rmd \textbf{v} \underset{q\rightarrow 0}{\longrightarrow} \frac{1}{\beta_d}\E\big[\sigma_{d-1}\big(L_X(u)\cap B_1^d\big)\big].$$
By noticing that $\int_{\vect(\mathbf{w}^\perp)\cap B_1^d}{\sigma_{1}(\bar{l_{\mathbf{w},\mathbf{v}}})} \rmd \mathbf{v}=\sigma_{d}(B^{d}_1)$, one obtains
$$\frac{1}{q}\P\big(X(\0) \leq u < X(\textbf{w}q)\big) \underset{q\rightarrow 0}{\longrightarrow} \frac{1}{\beta_d\sigma_d(B_1^d)}\E\big[\sigma_{d-1}\big(L_X(u)\cap B_1^d\big)\big] = \frac{C_{d-1}^*(u)}{\beta_d}.$$
This proves the first order approximation in~\eqref{eqn:first_item_thm2.1}.

To show that
\begin{equation}\label{eqn:monotonicity_in_q}
\frac{C_{d-1}^*(u)}{\beta_d}q \geq \P\big(X(\mathbf{0}) \leq u < X( q\mathbf{w})\big)
\end{equation}
for all $q > 0$, suppose that there exists a $t\in\R^+$ such that
$$\frac{C_{d-1}^*(u)}{\beta_d}t < \P\big(X(\mathbf{0}) \leq u < X( t\mathbf{w})\big).$$
Then, for all $n\in\N^+$,
$\{X(\0) \leq  u < X(t\textbf{w})\} \subseteq \{X(\0) \leq  u < X(\tfrac{t}{n}\textbf{w})\}\cup \ldots \cup \{X(\tfrac{n-1}nt\textbf{w}) \leq  u < X(t\textbf{w})\},$
leading under~\ref{A0} to
$$\frac{C_{d-1}^*(u)}{\beta_d} < \frac{1}{t}  \P\big(X(\0) \leq  u < X(t\textbf{w})\big) \leq \frac{n}{t}  \P\big(X(\0) \leq  u < X(\tfrac{t}{n}\textbf{w})\big).$$ Keeping $t$ fixed and having $n\to\infty$
 contradicts the convergence in~\eqref{eqn:first_item_thm2.1}. Thus,~\eqref{eqn:monotonicity_in_q} holds for all $q\in\R^+$.

We proceed by showing~\eqref{eqn:second_item_thm2.1}.
For fixed $q>0$, there can only be a difference between the  expression in~\eqref{eqn:as_to_chi} and its limit---both taking values in $\N_0$---if at least one of the two following conditions hold,
\begin{enumerate}
\item[(i)]
There exists a $j\in\{0,\ldots,n_q-1\}$ such that the line segment with endpoints $\mathbf{x}_j$ and $\mathbf{x}_{j+1}$
contains two points $\mathbf{s}_1\neq \mathbf{s}_2$ such that $X(\mathbf{s}_1)=X(\mathbf{s}_2)=u$.
\item[(ii)] The line segment with endpoints $\textbf{x}_{n_q}$ and $\textbf{x}^0$ contains a point $\textbf{s}$ such that $X(\textbf{s})=u$.
\end{enumerate}
The probability of event in (i) is bounded above by $n_{q}K_2q^2$ for some $K_2\in\R^+$, under~\ref{A1} by applying Lemma~\ref{lem:1D_small_sojourn} with $p=2$. Furthermore, by using Markov's inequality and Theorem 3.3.1 in \cite{buetler1966}, the probability of event (ii) is bounded above by $K_1q$ for some $K_{1}\in \R^{+}$. Let $A$ denote the event $\sigma_0(L_X(u)\cap \bar{l_{\mathbf{w},\mathbf{v}}}) > \sum_{j=0}^{n_q-1}\big( \I{X(\mathbf{x}_j) \leq u < X(\mathbf{x}_{j+1})} + \I{X(\mathbf{x}_j) > u \geq X(\mathbf{x}_{j+1})}\big)$, then  applying H\"older's inequality together with~\ref{A2} implies
\begin{align*}
\E\big[&\sigma_0(L_X(u)\cap \bar{l_{\mathbf{w},\mathbf{v}}})\big] - 2n_q\P\big(X(\mathbf{0}) \leq u < X(q\mathbf{w})\big)\\
&= \E\Big[\Big|\sigma_0(L_X(u)\cap \bar{l_{\mathbf{w},\mathbf{v}}}) - \sum_{j=0}^{n_q-1}\big( \I{X(\mathbf{x}_j) \leq u < X(\mathbf{x}_{j+1})} + \I{X(\mathbf{x}_j) > u \geq X(\mathbf{x}_{j+1})}\big) \Big|\Big]\\
&\le  \E\big[\sigma_0(L_X(u)\cap \bar{l_{\mathbf{w},\mathbf{v}}})\I{A} \big]\le \left(\E\big[\big(\sigma_0(L_X(u)\cap \bar{l_{\mathbf{w},\mathbf{v}}})\big)^{\frac1\eps} \big]\right)^{\eps} \PP(A)^{1-\eps}\\
&\leq \left(\E\big[\big(\sigma_0(L_X(u)\cap \bar{l_{\mathbf{w},\mathbf{v}}})\big)^{\frac1\eps} \big]\right)^{\eps}\big(n_q K_2q^2 + K_1q\big)^{1-\eps}\\
&\leq \left(\E\big[\big(\sigma_0(L_X(u)\cap \bar{l_{\mathbf{w},\mathbf{v}}})\big)^{\frac1\eps} \big]\right)^{\eps}\big(\sigma_{1}(\bar{l_{\mathbf{w},\mathbf{v}}})K_2 + K_1\big)^{1-\eps}q^{1-\eps}\\
&\leq\left(\E\big[\big(\sigma_0(L_X(u)\cap \bar{l_{\mathbf{w},\mathbf{v}}})\big)^{\frac1\eps} \big]\right)^{\eps}\big(2 K_2 + K_1\big)^{1-\eps}q^{1-\eps}.
\end{align*}
Furthermore, note that
\begin{align*}
\Big|\E\big[&\sigma_0(L_X(u)\cap \bar{l_{\mathbf{w},\mathbf{v}}})\big] - 2\frac{\sigma_{1}(\bar{l_{\mathbf{w},\mathbf{v}}})}{q}\P\big(X(\mathbf{0})\leq u < X(q\mathbf{w})\big)\Big|\\
&= \Big|\E\big[\sigma_0(L_X(u)\cap \bar{l_{\mathbf{w},\mathbf{v}}})\big] - 2n_q\P\big(X(\mathbf{0})\leq u < X(q\mathbf{w})\big)+ 2\Big(n_q-\frac{\sigma_{1}(\bar{l_{\mathbf{w},\mathbf{v}}})}{q}\Big)\P\big(X(\mathbf{0})\leq u < X(q\mathbf{w})\big)\Big|\\
&\leq \left(\E\big[\big(\sigma_0(L_X(u)\cap \bar{l_{\mathbf{w},\mathbf{v}}})\big)^{\frac1\eps} \big]\right)^{\eps}\big(2K_2 + K_1\big)^{1-\eps}q^{1-\eps} + 2\P\big(X(\mathbf{0})\leq u < X(q\mathbf{w})\big)\\
&\leq \left(\E\big[\big(\sigma_0(L_X(u)\cap \bar{l_{\mathbf{w},\mathbf{v}}})\big)^{\frac1\eps} \big]\right)^{\eps}\big(2K_2 + K_1\big)^{1-\eps}q^{1-\eps} + 2K_1q.
\end{align*}
Summarizing, we have shown that
$$\Big|q\E\big[\sigma_0(L_X(u)\cap \bar{l_{\mathbf{w},\mathbf{v}}})\big] - 2{\sigma_{1}(\bar{l_{\mathbf{w},\mathbf{v}}})}\P\big(X(\mathbf{0})\leq u < X(q\mathbf{w})\big)\Big| = O(q^{2-\eps}),$$
for all $\mathbf{v}\in \vect(\mathbf{w}^\perp)\cap B_1^d$.
By integrating over $\mathbf{v}$ and taking the absolute value, one preserves
\begin{equation}\label{eqn:Oq2}
\Big|\int_{\vect(\mathbf{w}^\perp)\cap B_1^d}\Big( q\E\big[\sigma_0(L_X(u)\cap \bar{l_{\mathbf{w},\mathbf{v}}})\big] - 2{\sigma_{1}(\bar{l_{\mathbf{w},\mathbf{v}}})}\P\big(X(\mathbf{0})\leq u < X(q\mathbf{w})\big)\Big)\ \rmd \mathbf{v}\Big| = O(q^{2-\eps}).
\end{equation}
Evaluating the integral, and again applying $\int_{\vect(\mathbf{w}^\perp)\cap B_1^d}{\sigma_{1}(\bar{l_{\mathbf{w},\mathbf{v}}})} \rmd \mathbf{v}=\sigma_{d}(B^{d}_1) $, one obtains
\begin{align*}
\int_{\vect(\mathbf{w}^\perp)\cap B_1^d}&\Big( q\E\big[\sigma_0(L_X(u)\cap \bar{l_{\mathbf{w},\mathbf{v}}})\big] - 2{\sigma_{1}(\bar{l_{\mathbf{w},\mathbf{v}}})}\P\big(X(\mathbf{0})\leq u < X(q\mathbf{w})\big)\Big)\ \rmd \mathbf{v}\\
&= q\int_{\vect(\mathbf{w}^\perp)} \E\big[\sigma_0(L_X(u)\cap \bar{l_{\mathbf{w},\mathbf{v}}})\big]\ \rmd \mathbf{v} -2\sigma_d(B_1^d)\P\big(X(\mathbf{0})\leq u < X(q\mathbf{w})\big)\\
&= \frac{2q}{\beta_d}\E\big[\sigma_{d-1}\big(L_X(u)\cap B_1^d\big)\big] - 2\sigma_d(B_1^d)\P\big(X(\mathbf{0})\leq u < X(q\mathbf{w})\big)\\
&= 2\sigma_d(B_1^d)\Big( \frac{C_{d-1}^*(u)}{\beta_d}q- \P\big(X(\mathbf{0}) \leq u < X( q\mathbf{w})\big)\Big),
\end{align*}
where the second equality follows from~\eqref{eqn:crofton_final}. Thus, by~\eqref{eqn:Oq2}, we obtain the desired result
$$\Big|\frac{C_{d-1}^*(u)}{\beta_d}q - \P\big(X(\mathbf{0}) \leq u < X( q\mathbf{w})\big)\Big| = O(q^{2-\eps}).$$
\end{proof}

\begin{Remark}
Theorem~\ref{prp:Pcrossing_Esurfacearea} can also be  very useful for sample simulations. To have a rapid evaluation of the surface area of excursion sets of a given random field satisfying above assumptions, it is not necessary to generate the whole isotropic random field on $\R^{d}$ but only \emph{i.i.d}. r.v. with the same bivariate distribution as $\big(X({\0}), X({q\,\be_{1}}) \big)$, for $q$ small enough. Then, the numerical first order approximation of the surface area would be $$ \beta_{d}\frac{\hat\PP\big(X(\mathbf{0}) \leq u < X( q\be_{1})\big)}{q}.$$
This induces an error of the order $q^{1-\eps}$ according to Equation~\eqref{eqn:second_item_thm2.1}. This approach can be compared for instance with the tedious computations encountered when using  tube formulas (see, \emph{e.g.}, \cite{AT07}) to obtain an exact value of the expected surface area.
\end{Remark}

\paragraph{A related result.}

Notice that Theorem~\ref{prp:Pcrossing_Esurfacearea} fully identifies the limit of the level crossing and relies on classical assumptions. Another result  on level crossings can be obtained by adapting a result in \cite{Leadbetter} to dimension $d$ (see Proposition~\ref{LeadbetterTheorem} below). For clarity, the proof of this adaptation in general dimension $d$ is given in Section~\ref{prf:secSA}.

\begin{Proposition}[A $d-$dimensional formulation of \cite{Leadbetter}, Theorem 7.2.4]\label{LeadbetterTheorem} Let $X:\R^{d}\to\R$ be a continuous stationary random field  such that $X(\0)$ and $X_{q}:=\frac1q(X(q \be_{1})-X(\0))$ have a joint density denoted $g_{q}(u,z)$ that is continuous in $u$ for all $z$ and all sufficiently small $q>0$, and that there exists a function $p(u,z)$ such that $g_{q}(u,z)\rightarrow p(u,z)$ uniformly in $u$ for fixed $z$ as $q\to0$.
Assume furthermore that there is a function $h(z)$ such that $\int_{0}^{\infty} zh(z)\rmd z<\infty$ and $g_{q}(u,z)\le h(z)$ {uniformly} for all $u, q$. Then, it holds that
$$\lim_{q\to0} \frac{\PP\left(X(\0)\leq  u < X(q \be_1)\right)}{q}=\int_{0}^{\infty}z\,p(u,z)\rmd z.$$
\end{Proposition}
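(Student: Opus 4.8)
The plan is to write the crossing probability as an integral of the joint density $g_q(u,z)$ over the region where a crossing must occur, and then pass to the limit using the dominated convergence theorem justified by the hypothesis $g_q(u,z)\le h(z)$ with $\int_0^\infty z h(z)\,\rmd z<\infty$. First I would observe that the event $\{X(\0)\le u<X(q\be_1)\}$ can be rewritten in terms of the pair $(X(\0),X_q)$ where $X_q=\tfrac1q(X(q\be_1)-X(\0))$: indeed $X(q\be_1)=X(\0)+qX_q$, so the event is $\{X(\0)\le u,\ X(\0)+qX_q>u\}=\{u-qX_q<X(\0)\le u\}$, which is nonempty only when $X_q>0$. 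Hence
\begin{equation*}
\PP\big(X(\0)\le u<X(q\be_1)\big)=\int_0^\infty\!\!\int_{u-qz}^{u} g_q(v,z)\,\rmd v\,\rmd z.
\end{equation*}

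Next I would divide by $q$ and analyze the inner integral. For fixed $z>0$, continuity of $v\mapsto g_q(v,z)$ together with the uniform convergence $g_q(\cdot,z)\to p(\cdot,z)$ gives $\tfrac1q\int_{u-qz}^u g_q(v,z)\,\rmd v\to z\,p(u,z)$ as $q\to0$: write $\tfrac1q\int_{u-qz}^u g_q(v,z)\,\rmd v = \tfrac1q\int_{u-qz}^u p(v,z)\,\rmd v + \tfrac1q\int_{u-qz}^u (g_q(v,z)-p(v,z))\,\rmd v$; the first term tends to $z\,p(u,z)$ by continuity of $p(\cdot,z)$ (which is inherited as a uniform limit of continuous functions), while the second is bounded in absolute value by $z\sup_v|g_q(v,z)-p(v,z)|\to0$. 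For the dominating function, the bound $g_q(v,z)\le h(z)$ yields $\tfrac1q\int_{u-qz}^u g_q(v,z)\,\rmd v\le \tfrac1q\cdot qz\cdot h(z)=z\,h(z)$, which is integrable over $(0,\infty)$ by assumption and independent of $q$. Applying dominated convergence in the variable $z$ then gives
\begin{equation*}
\lim_{q\to0}\frac{\PP\big(X(\0)\le u<X(q\be_1)\big)}{q}=\int_0^\infty z\,p(u,z)\,\rmd z,
\end{equation*}
as claimed.

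The main obstacle I anticipate is the bookkeeping around the pointwise limit of the inner integral: one must use both the continuity of $g_q(\cdot,z)$ in $u$ (to control $\tfrac1q\int_{u-qz}^u p$, via transferring continuity to the limit $p$) and the uniform-in-$u$ convergence $g_q\to p$ (to kill the difference term), and these two hypotheses interact precisely here — neither alone suffices. A secondary point requiring a little care is measurability/joint-continuity so that Tonelli applies in the first display and so that the integrand $z\mapsto \tfrac1q\int_{u-qz}^u g_q(v,z)\,\rmd v$ is measurable in $z$; this is routine given the stated regularity of $g_q$. No issue arises from the tail in $z$ since the domination $z\,h(z)$ is uniform in $q$.
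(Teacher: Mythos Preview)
Your proof is correct and follows essentially the same route as the paper's: both rewrite the event as $\{u-qX_q<X(\0)\le u,\ X_q>0\}$, express the probability as $\int_0^\infty\int_{u-qz}^u g_q(v,z)\,\rmd v\,\rmd z$, and pass to the limit via dominated convergence with the bound $z\,h(z)$. The only cosmetic difference is that the paper performs the change of variables $v=(u-x)/(qz)$ to rewrite the inner integral as $z\int_0^1 g_q(u-qzv,z)\,\rmd v$ before invoking DCT, whereas you handle the inner average directly by splitting off $p(v,z)$; both arguments use the same hypotheses in the same places.
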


Remark that it is difficult to apply Proposition~\ref{LeadbetterTheorem} in practice, since computing the asymptotic joint density $p$ is not easy outside of specific cases such as the Gaussian one. Moreover, it requires that the convergence of the density of $X_{q}$ as $q\to0$ is established. Even if it is clear that $X_{q}$ converges almost surely to $\partial_1 X(\0)$,  where $\partial_{1}X$ denotes the partial derivative of $X$ in the direction $\be_1$, having the convergence of the corresponding  densities is  much more delicate to establish (see, \emph{e.g.}, \cite{boos1985converse} or \cite{sweeting1986converse}).  If $d=1$, \cite{Leadbetter} explains that ``\emph{In many cases, the limit $p(u,z)$ is simply the joint density of $\left(X(\0),\partial_{1}X(\0)\right)$.}'' This holds for example, for Gaussian processes. Translating this in dimension $d\ge 1$, if we write $p_{X(\0)}(u)$  for the density of $X(\0)$, we see that
\begin{align}\label{eq:ClaimP}
\lim_{q\to0} \frac{\PP\left(X(\0)\leq  u< X(q  \be_1)\right)}{q}=p_{X(\0)}(u)\E\left[\partial_1 X(\0) \mathbbm{1}_{\{\partial_1 X(\0)>0\}}|X(\0)=u\right].
\end{align}

\paragraph{Bias  control for the estimated surface area measure.}

The following theorem provides the explicit formula for the bias of the surface area in any dimension $d$ and for arbitrary point-referenced $d$-honeycombs in Definition~\ref{def:lattice} whose polytopes do not grow too large near $\infty$. In this way it generalizes and unifies existing results (see, \textit{e.g.}  Proposition 5 in \citet{bierme:hal-02793752} for the square and hexagonal tiling  in dimension $2$ and \citet{miller1999} for the triangular tiling in dimension 2 and hypercubic lattice in dimension 3). Furthermore, Theorem~\ref{thm:honeycomb} applies to Poisson-Voronoi tessellations, as shown in Corollary~\ref{corollaryBiais}.

\begin{Theorem}\label{thm:honeycomb} Let $T\subset \R^d$ be a compact domain  with non empty interior containing the origin. Let $X$ be an isotropic random field on $\R^d$ that satisfies Assumption~\ref{A0}.

\begin{itemize}
\item[i)] Let $\mathcal{H}\in\mathfrak{T}^d$  (see Definition~\ref{def:lattice}) and let $\dot{\mathcal{H}}$ be a corresponding point-referenced $d$-honeycomb. Define $D^{(\mathcal{H})} := \sup\{\mathrm{diam}(P\cap T) : P\in\mathcal{H}\}$ and $\delta\mathcal{H}:= \{\delta P : P \in \mathcal{H}\}$ for $\delta\in\R^+$.  Let $\widehat C^{(\dot{\mathcal{H}},T)}_{d-1}(u)$ be  as in Definition~\ref{def:estGenTil}.
Suppose  that $\lim_{\delta\to 0}D^{(\delta\mathcal{H})} = 0$.
Then, it holds
\begin{equation}\label{eqn:honeycomb_deterministic}\E\big[\widehat C^{(\delta \dot{\mathcal{H}},T)}_{d-1}(u)\big] \underset{\delta\to 0}{\longrightarrow} \frac{2d}{\beta_d}C^*_{d-1}(u),
\end{equation}
where $\delta\dot{\mathcal{H}} := \{(\delta P, \delta P^\bullet) : (P,P^\bullet)\in\dot{\mathcal{H}}\}$ is $\dot{\mathcal{H}}$ linearly rescaled by $\delta$, $\beta_d$ is as in~\eqref{betad}, and $C^*_{d-1}(u)$ is as in~\eqref{defCast}.

\item[ii)] Furthermore, if $\xi$ is a point process on $\R^d$ that is independent of $X$,  let $\mathcal{V}_\xi$ be the Voronoi diagram in $\mathfrak{T}^d$ generated from the points in $\xi$. Reference the polytopes in $\mathcal{V}_\xi$ by their corresponding points in $\xi$ and denote the resulting point-referenced $d$-honeycomb by $\dot{\mathcal{V}}_\xi$. If $D^{(\delta\mathcal{V}_\xi)} \stackrel{\P}\to 0$ as $\delta\to 0$, then
\begin{equation*}\label{eqn:honeycomb_PP}
\E\big[\widehat C^{(\delta \dot{\mathcal{V}}_\xi,T)}_{d-1}(u)\big] \underset{\delta\to 0}{\longrightarrow} \frac{2d}{\beta_d}C^*_{d-1}(u).
\end{equation*}
\end{itemize}
\end{Theorem}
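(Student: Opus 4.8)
The plan is to reduce the statement to a local, direction-wise computation using Theorem~\ref{prp:Pcrossing_Esurfacearea}, exploiting that the point-referenceability condition in Definition~\ref{def:lattice} forces every interface $P_1\cap P_2$ to be orthogonal to the segment joining the two reference points. First I would fix $\delta>0$ and rewrite the expectation of $\widehat C^{(\delta\dot{\mathcal H},T)}_{d-1}(u)$ by passing the expectation inside the (locally finite) sum over adjacent cells. For each ordered pair of adjacent cells $(\delta P_1,\delta P_2)$ with reference points $\delta P_1^\bullet,\delta P_2^\bullet$, stationarity of $X$ gives $\P(X(\delta P_1^\bullet)\le u<X(\delta P_2^\bullet)) = \P(X(\mathbf 0)\le u<X(q\mathbf w))$ where $q = \|\delta P_2^\bullet-\delta P_1^\bullet\|_2$ and $\mathbf w$ is the corresponding unit vector; by isotropy this does not depend on $\mathbf w$. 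So
\[
\E\big[\widehat C^{(\delta\dot{\mathcal H},T)}_{d-1}(u)\big]
= \frac{1}{\sigma_d(T)}\underset{P_1\neq P_2}{\sum_{\delta P_1,\delta P_2\in\delta\mathcal H^T}}\sigma_{d-1}(\delta P_1\cap\delta P_2)\,\P\big(X(\mathbf 0)\le u<X(q_{12}\mathbf w)\big),
\]
with $q_{12} = \|\delta P_1^\bullet-\delta P_2^\bullet\|_2 \le D^{(\delta\mathcal H)}\to 0$.

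Next I would insert Theorem~\ref{prp:Pcrossing_Esurfacearea}: write $\P(X(\mathbf 0)\le u<X(q_{12}\mathbf w)) = \frac{C^*_{d-1}(u)}{\beta_d}q_{12} - r(q_{12})$ with $0\le r(q_{12})\le \frac{C^*_{d-1}(u)}{\beta_d}q_{12}$ and, crucially, $r(q_{12})/q_{12}\to 0$ uniformly as $q_{12}\to 0$ (this uniformity is exactly the content of~\eqref{eqn:first_item_thm2.1} being a genuine limit, combined with monotonicity~\eqref{eqn:monotonicity_in_q}; alternatively~\eqref{eqn:second_item_thm2.1} gives a $q^{1-\eps}$ rate when~\ref{A1}--\ref{A2} hold, but the bare limit suffices here). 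The main term then contributes
\[
\frac{C^*_{d-1}(u)}{\beta_d}\cdot\frac{1}{\sigma_d(T)}\underset{P_1\neq P_2}{\sum_{\delta P_1,\delta P_2\in\delta\mathcal H^T}}\sigma_{d-1}(\delta P_1\cap\delta P_2)\,\|\delta P_1^\bullet-\delta P_2^\bullet\|_2 .
\]
The key geometric identity to establish is that, because the interface $\delta P_1\cap\delta P_2$ is normal to $\delta P_1^\bullet - \delta P_2^\bullet$, the quantity $\tfrac12\sigma_{d-1}(\delta P_1\cap\delta P_2)\,\|\delta P_1^\bullet-\delta P_2^\bullet\|_2$ is exactly the $\sigma_d$-volume of the "bipyramid" (convex hull of the interface with the two reference points), or more usefully that summing these over all adjacent pairs recovers $d$ times the total volume $\sum_{P}\sigma_d(\delta P\cap\text{(region)})$ — this is a dissection/divergence-theorem argument: each cell $\delta P$ is partitioned (up to $\sigma_d$-null sets) into cones over its faces with apex at $\delta P^\bullet$, and the cone over a face $F$ has volume $\tfrac1d\,\sigma_{d-1}(F)\cdot\operatorname{dist}(\delta P^\bullet,\text{aff}(F))$, while the normality condition makes $\operatorname{dist}(\delta P_1^\bullet,\text{aff}(F)) + \operatorname{dist}(\delta P_2^\bullet,\text{aff}(F)) = \|\delta P_1^\bullet-\delta P_2^\bullet\|_2$ for a shared face $F$. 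Hence $\sum_{\{P_1,P_2\}}\sigma_{d-1}(\delta P_1\cap\delta P_2)\|\delta P_1^\bullet-\delta P_2^\bullet\|_2 = d\sum_P\sigma_d(\delta P)$ over cells in a fixed region, and restricting to $\delta\mathcal H^T$ this equals $d\,\sigma_d\big(\bigcup_{P\in\delta\mathcal H^T}\delta P\big)$, which tends to $\sigma_d(T)$ as $\delta\to 0$ under the hypothesis $D^{(\delta\mathcal H)}\to 0$ (boundary cells are crushed). Accounting for the factor $2$ coming from summing over \emph{ordered} pairs in~\eqref{GeneralLatticeEstimaor}, the main term converges to $\frac{2d}{\beta_d}C^*_{d-1}(u)$.

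It then remains to control the error: $\frac{1}{\sigma_d(T)}\sum \sigma_{d-1}(\delta P_1\cap\delta P_2)\,r(q_{12})$. Using $r(q_{12}) = o(q_{12})$ uniformly and $\sum\sigma_{d-1}(\delta P_1\cap\delta P_2)q_{12}\le d\,\sigma_d(T)$ (by the identity above, up to a harmless enlargement of $T$), this error is $o(1)$; more carefully, for any $\eta>0$ choose $q_0$ so that $r(q)\le \eta q$ for $q\le q_0$, and choose $\delta$ small enough that $D^{(\delta\mathcal H)}\le q_0$, bounding the error by $\eta\cdot\frac{d\,\sigma_d(T)}{\sigma_d(T)}C^*_{d-1}(u)/\beta_d$. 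Letting $\eta\to 0$ finishes part~i). For part~ii), the Voronoi diagram $\mathcal V_\xi$ lies in $\mathfrak T^d$ with reference points the generating points of $\xi$ (the perpendicular-bisector property of Voronoi cells is precisely Definition~\ref{def:lattice}), so conditionally on $\xi$ part~i) applies provided $D^{(\delta\mathcal V_\xi)}\to 0$; under the assumption $D^{(\delta\mathcal V_\xi)}\stackrel{\P}{\to}0$ and the uniform bound $0\le \E[\widehat C^{(\delta\dot{\mathcal V}_\xi,T)}_{d-1}(u)\mid\xi]\le \frac{C^*_{d-1}(u)}{\beta_d}\cdot\frac{1}{\sigma_d(T)}\sum\sigma_{d-1}(\cdot)q$, one passes to the limit by dominated convergence (along subsequences where $D^{(\delta\mathcal V_\xi)}\to 0$ a.s., using independence of $\xi$ and $X$), giving the same limit $\frac{2d}{\beta_d}C^*_{d-1}(u)$. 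The main obstacle I anticipate is making the dissection identity $\sum\sigma_{d-1}(\delta P_1\cap\delta P_2)\|\delta P_1^\bullet-\delta P_2^\bullet\|_2 = d\sum_P\sigma_d(\delta P)$ fully rigorous when cells meet $\partial T$ or when a reference point lies on a face rather than strictly interior (degenerate cones); this is handled by a mild enlargement of $T$ and noting that such degeneracies contribute zero or are swallowed by the $D^{(\delta\mathcal H)}\to 0$ boundary estimate.
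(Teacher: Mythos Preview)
Your proposal is correct and follows essentially the same route as the paper: linearize the expectation, apply Theorem~\ref{prp:Pcrossing_Esurfacearea} to each crossing probability (using stationarity and isotropy so the probability depends only on $q_{12}$), and then evaluate the geometric sum $\sum\sigma_{d-1}(\delta P_1\cap\delta P_2)\,\|\delta P_1^\bullet-\delta P_2^\bullet\|_2$ via the cone/bipyramid decomposition that the point-referenceability condition makes available. The paper phrases the dissection slightly differently---it defines the pyramids $\Lambda_{Q_1,Q_2}$ explicitly and shows their union fills $T$ up to a $2D^{(\delta\mathcal H)}$-neighbourhood of $\partial T$---but this is precisely the argument you sketched and whose boundary subtlety you already flagged. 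Two small clean-ups: the bound $q_{12}\le D^{(\delta\mathcal H)}$ should be $q_{12}\le 2D^{(\delta\mathcal H)}$ (each reference point lies in its own cell), which is harmless; and the uniformity of $r(q)/q\to 0$ does not actually require monotonicity---it follows immediately because, after stationarity and isotropy, there is a \emph{single} function of $q$ in play and all $q_{12}$'s lie below $2D^{(\delta\mathcal H)}\to 0$. For part~ii) the paper avoids your subsequence extraction by noting directly that $\E[\widehat C^{(\delta\dot{\mathcal V}_\xi,T)}_{d-1}(u)\mid\xi]\le \frac{2d}{\beta_d}C^*_{d-1}(u)$ almost surely (both the crossing-probability convergence and the pyramid-union convergence are from below), so convergence in probability plus a deterministic bound gives $L^1$ convergence immediately; your dominated-convergence-along-subsequences argument also works but is a detour.
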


\begin{proof}[Proof of Theorem~\ref{thm:honeycomb}]
First, we establish Equation~\eqref{eqn:honeycomb_deterministic} where the point-referenced $d$-honeycomb $\dot{\mathcal{H}}$ is deterministic. The estimator $\widehat C^{(\delta\dot{\mathcal{H}},T)}_{d-1}(u)$ in~\eqref{GeneralLatticeEstimaor} is written
$$\widehat C^{(\delta\dot{\mathcal{H}},T)}_{d-1}(u) = \frac{1}{\sigma_d(T)}
\underset{(P_1\neq P_2),  (\delta P_1,\delta P_2\subseteq T)}{\sum_{P_1,P_2\in\mathcal{H}}}
\sigma_{d-1}(\delta P_1\cap \delta P_2)\I{X(\delta P_1^\bullet) \leq  u < X(\delta P_2^\bullet)}.$$
By the linearity of the expectation, for $\delta > 0$,
\begin{equation}\label{eqn:linearity_E_lattice}
\E[\widehat C^{(\delta\dot{\mathcal{H}},T)}_{d-1}(u)] =
\frac{1}{\sigma_d(T)}
\underset{(P_1\neq P_2),  (\delta P_1,\delta P_2\subseteq T)}{\sum_{P_1,P_2\in\mathcal{H}}}
\sigma_{d-1}(\delta P_1\cap \delta P_2)\P\big(X(\delta P_1^\bullet) \leq  u < X(\delta P_2^\bullet)\big).
\end{equation}
Let $P_1,P_2\in \mathcal{H}$ be such that $P_1 \neq P_2$ and $\sigma_{d-1}(P_1\cap P_2) > 0$, \textit{i.e.}, such that $P_1$ and $P_2$ are adjacent in $\mathcal{H}$. Then as $\delta \to 0$, clearly $||\delta P_2^\bullet - \delta P_1^\bullet||_2\to 0$, and so
by Equation~\eqref{eqn:first_item_thm2.1} in Theorem~\ref{prp:Pcrossing_Esurfacearea},
\begin{equation}\label{eqn:uniform_converge_lattice}
\frac{\P\big(X(\delta P_1^\bullet) \leq  u < X(\delta P_2^\bullet)\big)}{||\delta P_2^\bullet - \delta P_1^\bullet||_2} \underset{\delta\to 0}\longrightarrow \frac{C^*_{d-1}(u)}{\beta_d}.
\end{equation}
Moreover, if one restricts to neighbouring polytopes $P_1$ and $P_2$ in $\mathcal{H}$ such that $\delta P_1,\delta P_2 \subseteq T$, then $D^{(\delta\mathcal{H})}\to 0$ implies that the convergence in~\eqref{eqn:uniform_converge_lattice} is uniform as $T$ remains fixed, \textit{i.e.}
$$\sup\bigg\{\Big|\frac{\P\big(X(\delta P_1^\bullet) \leq  u < X(\delta P_2^\bullet)\big)}{||\delta P_2^\bullet - \delta P_1^\bullet||_2} - \frac{C^*_{d-1}(u)}{\beta_d}\Big| : P_1\mathrm{\ neighbours\ } P_2,\mathrm{\ and\ } \delta P_1,\delta P_2\subseteq T\bigg\}\underset{\delta\to 0}\longrightarrow 0.
$$
Therefore, Equation~\eqref{eqn:linearity_E_lattice} can be written
\begin{equation*}
\E[\widehat C^{(\delta\dot{\mathcal{H}},T)}_{d-1}(u)] - \frac{C^*_{d-1}(u)}{\beta_d\sigma_d(T)}
\underset{(P_1\neq P_2),  (\delta P_1,\delta P_2\subseteq T)}{\sum_{P_1,P_2\in\mathcal{H}}}
\sigma_{d-1}(\delta P_1\cap \delta P_2)||\delta P_2^\bullet - \delta P_1^\bullet||_2 \underset{\delta\to 0}\longrightarrow 0.
\end{equation*}
With the change of variables $Q_1 = \delta P_1$ and $Q_2 = \delta P_2$, we have
\begin{equation}\label{eqn:conditional_exp_converges_1}
\E[\widehat C^{(\delta\dot{\mathcal{H}},T)}_{d-1}(u)] -
\frac{C^*_{d-1}(u)}{\beta_d\sigma_d(T)}
\underset{Q_1\neq Q_2}{\sum_{Q_1,Q_1\in(\delta\mathcal{H})^T}}
\sigma_{d-1}( Q_1\cap Q_2)||Q_2^\bullet - Q_1^\bullet||_2 \underset{\delta\to 0}\longrightarrow 0.
\end{equation}
For adjacent $Q_1,Q_2\in(\delta\mathcal{H})^T$, denote by $\Lambda_{Q_1,Q_2}$ the $d$-dimensional hyperpyramid with base $Q_1\cap Q_2$ and vertex $Q_1^\bullet$ (see Figure~\ref{fig:pyramids}).

\begin{figure}[ht!]
    \centering
    \includegraphics[width=0.5\textwidth]{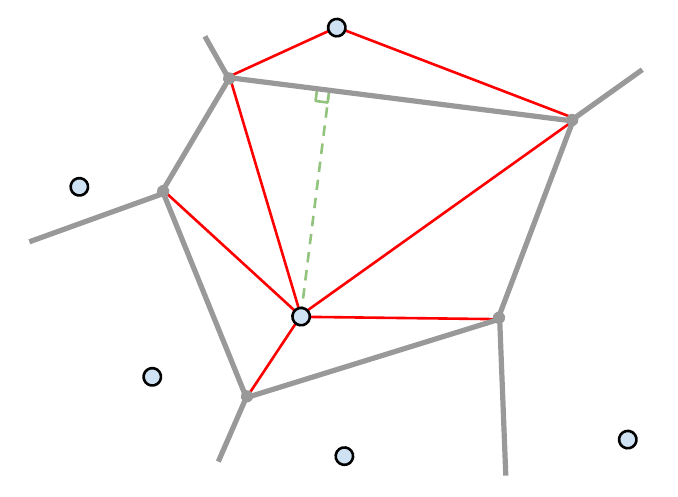}
    \put(-125,120){$\Lambda_{Q_1,Q_2}$}
    \put(-125,154){$\Lambda_{Q_2,Q_1}$}
    \put(-144,115){$h_1$}
    \put(-122,176){$Q_2^{\bullet}$}
    \put(-135,55){$Q_1^{\bullet}$}

    \caption{A pentagonal cell $Q_1$ in a point-referenced 2-honeycomb is partitioned into five pyramids (triangles in the case $d=2$) each with summit $Q_1^\bullet$. A sixth pyramid $\Lambda_{Q_2,Q_1}$ with summit $Q_2^\bullet$ is drawn in the cell $Q_2$, which shares its base $Q_1\cap Q_2$ with the pyramid $\Lambda_{Q_1,Q_2}$ contained in $Q_1$. The height of the pyramid $\Lambda_{Q_1,Q_2}$ is marked $h_1$.
 \label{fig:pyramids}}
\end{figure}

 The $d$ dimensional analogue of the area of a triangle implies that $\sigma_d(\Lambda_{Q_1,Q_2})=\frac{1}{d}\sigma_{d-1}(Q_1\cap Q_2) h_1$, where $h_1$ is the distance from $Q_1^\bullet$ to the hyperplane containing $Q_1\cap Q_2$. Now, $\sigma_d(\Lambda_{Q_1,Q_2} \cup \Lambda_{Q_2,Q_1})=\frac{1}{d}\sigma_{d-1}( Q_1\cap Q_2)||Q_2^\bullet -  Q_1^\bullet||_2$, and so
\begin{align*}
\underset{Q_1\neq Q_2}{\sum_{Q_1,Q_1\in(\delta\mathcal{H})^T}}
\sigma_{d-1}( Q_1\cap Q_2)||Q_2^\bullet - Q_1^\bullet||_2 &= \hspace{-0.41cm}
\underset{Q_1\mbox{\ adjacent\ to\ }Q_2}{\sum_{Q_1,Q_2\in(\delta\mathcal{H})^T}}
\hspace{-0.41cm} d\, \sigma_d(\Lambda_{Q_1,Q_2} \cup \Lambda_{Q_2,Q_1})=
2d \hspace{-0.41cm}
\underset{Q_1\mbox{\ adjacent\ to\ }Q_2}{\sum_{Q_1,Q_2\in(\delta\mathcal{H})^T}}
\hspace{-0.41cm} \sigma_d(\Lambda_{Q_1,Q_2})\\
&=
2d\,
\sigma_d\Bigg(\underset{Q_1\mbox{\ adjacent\ to\ }Q_2}{\bigcup_{Q_1,Q_2\in(\delta\mathcal{H})^T}}
\Lambda_{Q_1,Q_2}\Bigg).
\end{align*}
Since every point in $T$ at distance of at least $2D^{(\delta\mathcal{H})}$ from the boundary of $T$ is contained in a $\Lambda_{Q_1,Q_2}$ for some $Q_1,Q_2\in(\delta\mathcal{H})^T$, and $\Lambda_{Q_1,Q_2}$ is contained in $Q_1\subseteq T$,
$$\lim_{\delta\to 0}\underset{Q_1\mbox{\ adjacent\ to\ }Q_2}{\bigcup_{Q_1,Q_2\in(\delta\mathcal{H})^T}}
\Lambda_{Q_1,Q_2} = T.$$
Thus, by the continuity of the measure $\sigma_d$,
\begin{equation}\label{eqn:sigma_d_continuous}
\underset{Q_1\neq Q_2}{\sum_{Q_1,Q_1\in(\delta\mathcal{H})^T}}
\sigma_{d-1}( Q_1\cap Q_2)||Q_2^\bullet - Q_1^\bullet||_2 \underset{\delta\to 0}\longrightarrow 2d\,\sigma_d(T),
\end{equation}
and by~\eqref{eqn:conditional_exp_converges_1},
$$\E[\widehat C^{(\delta\dot{\mathcal{H}},T)}_{d-1}(u)] \underset{\delta\to 0}\longrightarrow \frac{2d}{\beta_d}C^*_{d-1}(u).$$
This proves~\eqref{eqn:honeycomb_deterministic}. Now if the tessellation is produced from a point process $\xi$ and if $D^{(\delta\mathcal{V}_\xi)}\stackrel{\P}\to 0$ as $\delta\to 0$, then~\eqref{eqn:honeycomb_deterministic} implies that
\begin{equation}\label{eqn:as_convergence_honeycomb}
\E[\widehat C^{(\delta\dot{\mathcal{V}}_\xi,T)}_{d-1}(u)\ |\ \xi] \stackrel{\P}\longrightarrow \frac{2d}{\beta_d}C^*_{d-1}(u),\qquad\delta\to 0.
\end{equation}
What remains to be shown is that the convergence in~\eqref{eqn:as_convergence_honeycomb} holds in $L^1$.  First, remark that the sequence in~\eqref{eqn:uniform_converge_lattice} converges to its limit from below by Theorem~\ref{prp:Pcrossing_Esurfacearea}.
Remark also that the sequence in~\eqref{eqn:sigma_d_continuous} converges to its limit from below. These two remarks, together with the convergence in probability in~\eqref{eqn:as_convergence_honeycomb}, imply that
$$\sup_{\delta > 0} \E[\widehat C^{(\delta\dot{\mathcal{V}}_\xi,T)}_{d-1}(u)\ |\ \xi] = \frac{2d}{\beta_d}C^*_{d-1}(u),\as$$
Therefore, the convergence in~\eqref{eqn:as_convergence_honeycomb} holds in $L^1$ and
$\E\big[\E[\widehat C^{(\delta\dot{\mathcal{V}}_\xi,T)}_{d-1}(u)\ |\ \xi]\big] \underset{\delta\to 0}\longrightarrow \frac{2d}{\beta_d}C^*_{d-1}(u)$
as desired.
\end{proof}

\begin{Remark}
In the second part of Theorem~\ref{thm:honeycomb}, the assumption that the point process $\xi$---that generates the point cloud of sample locations---is independent of $X$ is crucial. Otherwise, for any realization $X$, it would be possible to build a dependent point-referenced $d$-honeycomb adapted to the  realized $L_{X}^{T}(u)$ such that $\E\big[\widehat C^{(\delta \dot{\mathcal{V}}_\xi,T)}_{d-1}(u)\big] \underset{\delta\to 0}{\longrightarrow} C^*_{d-1}(u)$ and no corrective constant is required (see \cite{cotsakis2022perimeter}, for $d=2$). On the contrary, with a deterministic point-referenced $d$-honeycomb, the dimensional constant  $2d/\beta_{d}$ is unavoidable.
\end{Remark}

In the following corollary, we apply Theorem~\ref{thm:honeycomb} to Poisson-Voronoi tessellations.
The interested reader is also referred to Theorem~1.1 in \cite{thale2016asymptotic}.

\begin{Corollary}\label{corollaryBiais}
Let $T\subset \R^d$ be a compact domain with non empty interior containing the origin. Let $X$ be an isotropic random field on $\R^d$ satisfying Assumption~\ref{A0}, and let $\xi$ be a homogeneous Poisson point process on $\R^d$ with unit rate, independent of $X$. Let $\mathcal{V}_\xi$ be the Poisson-Voronoi tessellation constructed from $\xi$, thus making $\dot{\mathcal{V}}_\xi$ a random point-referenced $d$-honeycomb. With $\delta > 0$ and $\widehat C^{(\delta\dot{\mathcal{V}}_\xi,T)}_{d-1}(u)$ defined as in Definition~\ref{def:estGenTil}, it holds that
\begin{equation*}
\E\big[\widehat C^{(\delta \dot{\mathcal{V}}_\xi,T)}_{d-1}(u)\big] \stackrel{\delta\to 0}{\longrightarrow} \frac{2d}{\beta_d}C^*_{d-1}(u).
\end{equation*}
\end{Corollary}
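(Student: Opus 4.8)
The plan is to verify that the homogeneous unit-rate Poisson point process $\xi$ satisfies the hypotheses of Theorem~\ref{thm:honeycomb} ii), after which the conclusion is immediate. The only nontrivial hypothesis to check is that $D^{(\delta\mathcal{V}_\xi)} := \sup\{\mathrm{diam}(P\cap T) : P\in\mathcal{V}_\xi\} \stackrel{\P}{\to} 0$ as $\delta\to 0$; the independence of $\xi$ and $X$ is assumed directly. Recall that $D^{(\delta\mathcal{V}_\xi)}$ concerns the cells of $\mathcal{V}_\xi$ rescaled by $\delta$ and then intersected with the fixed compact $T$. By the scaling property of the Poisson-Voronoi tessellation, $\delta\mathcal{V}_\xi$ has the same distribution as the Voronoi tessellation $\mathcal{V}_{\xi_\delta}$ of a homogeneous Poisson process $\xi_\delta$ of intensity $\delta^{-d}$, so as $\delta\to 0$ the point cloud becomes dense in $T$ and the cells shrink.

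The key step is a quantitative covering argument. Fix $\eta > 0$; I would show $\P(D^{(\delta\mathcal{V}_\xi)} > \eta) \to 0$. Cover the compact set $T$ (enlarged by a fixed margin, say $T^{+} := T \oplus B^d_1$) by a finite collection of balls of radius $\eta/4$; the number $N_\eta$ of such balls is finite and independent of $\delta$. If every ball of radius $\eta/4$ with center in $T^+$ contains at least one point of $\xi_\delta$, then every Voronoi cell of $\mathcal{V}_{\xi_\delta}$ meeting $T$ has diameter at most $\eta$: indeed, a point $t$ in a cell whose nucleus is at distance $> \eta/2$ from $t$ would contradict the existence of a nearer nucleus within $\eta/4$ of $t$. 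By the union bound, $\P(\exists \text{ such empty ball}) \le N_\eta \cdot \exp(-\delta^{-d}\, \sigma_d(B^d_{\eta/4}))$, which tends to $0$ as $\delta\to 0$. (One can make the covering argument uniform by taking the centers on a fine grid; the precise constants are routine.) This establishes $D^{(\delta\mathcal{V}_\xi)} \stackrel{\P}{\to} 0$.

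With this in hand, $\mathcal{V}_\xi$ is a Voronoi diagram in $\mathfrak{T}^d$ (as noted in the remark following Definition~\ref{def:lattice}, every Voronoi diagram is point-referenceable, with the nuclei serving as reference points), $\xi$ is independent of the isotropic field $X$ satisfying Assumption~\ref{A0}, and $D^{(\delta\mathcal{V}_\xi)} \stackrel{\P}{\to} 0$. Thus Theorem~\ref{thm:honeycomb} ii) applies verbatim and yields
\begin{equation*}
\E\big[\widehat C^{(\delta \dot{\mathcal{V}}_\xi,T)}_{d-1}(u)\big] \stackrel{\delta\to 0}{\longrightarrow} \frac{2d}{\beta_d}C^*_{d-1}(u).
\end{equation*}
The main obstacle — though a mild one — is the diameter estimate: one must be careful that the supremum is over cells \emph{intersected with $T$}, so cells with far-away nuclei are harmless, and that the margin (enlarging $T$ to $T^+$) correctly captures all nuclei whose cells can reach $T$; a cell meeting $T$ whose nucleus lies outside $T^+$ would have a boundary point at distance $\ge 1$ from its nucleus, but then a nearer nucleus exists once the covering event holds, a contradiction. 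Everything else is a direct invocation of the already-proved Theorem~\ref{thm:honeycomb}.
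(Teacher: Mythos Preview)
Your proposal is correct and follows essentially the same approach as the paper: reduce to Theorem~\ref{thm:honeycomb}~ii) and verify $D^{(\delta\mathcal{V}_\xi)}\stackrel{\P}{\to}0$ via a covering/void-probability argument for the Poisson process. The only cosmetic difference is that you rescale the process (working with $\xi_\delta$ of intensity $\delta^{-d}$ on the fixed domain $T^+$, so your covering number $N_\eta$ is independent of $\delta$), whereas the paper keeps the unit-rate process $\xi$ and rescales the domain to $\frac{1}{\delta}(T+1)$, using a grid whose cardinality grows like $\delta^{-d}$; both bounds collapse for the same reason, the super-exponential decay of the void probability.
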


\begin{proof}
By Theorem~\ref{thm:honeycomb}, it suffices to show that $D^{(\delta\mathcal{V}_\xi)}\stackrel{\P}\to 0$ as $\delta\to 0$, where $D^{(\delta\mathcal{V}_\xi)} = \sup\{\mathrm{diam}(P\cap T):P\in\delta\mathcal{V}_\xi\}$. Let $1\ge \epsilon > 0$, denote  by $A_\epsilon$  the event $\{D^{(\delta\mathcal{V}_\xi)} > \epsilon\}$. Note that
\begin{align*}A_\epsilon = \{\exists P \in \delta\mathcal{V}_\xi : \mathrm{diam}(P\cap T) > \epsilon\}&\subset\left\{\exists r\in \frac1\delta (T+1), (r+B^{d}_{\frac{\eps}{2\delta}})\cap \xi=\emptyset\right\}\\ &\subset\bigcup_{r\in \frac{\eps}{4}\Z\cap \frac{T+1}{\delta}}\left\{ (r+B^{d}_{\frac{\eps}{4\delta}})\cap \xi=\emptyset\right\},\end{align*}
 where $T+1=\{r\in \R^{d},\ d(r,T)\le 1\}$. It follows from the stationarity of $\xi$ and the compactness of $T$ that $$\PP(A_{\xi})\le \frac{\sigma_{d}(T+1)4^{d}}{(\eps\delta)^{d}}\PP\left(B^{d}_{\frac{\eps}{4\delta}}\cap \xi=\emptyset\right)= \frac{\sigma_{d}(T+1)4^{d}}{(\eps\delta)^{d}}e^{-\frac{\pi^{d/2}}{\Gamma(\frac d2+1)}\left(\frac{\eps}{4\delta}\right)^{d}}\to0\mbox{ as }\delta\to0.$$
\end{proof}

\begin{Remark}[Convergence of the bias factor]\label{RemarkBiasFactor}
 Theorem~\ref{thm:honeycomb} \emph{i}) offers insight about the limiting value of $\E\big[\widehat C^{(\delta \dot{\mathcal{H}},T)}_{d-1}(u)\big]$ as $\delta\to 0$. However, this result does not imply the convergence of the bias factor to $2d/\beta_d$.   It can be shown by slightly modifying the proofs of Theorems~\ref{prp:Pcrossing_Esurfacearea} and~\ref{thm:honeycomb} that
\begin{equation}\label{eqn:bias_converge}
E\bigg[\frac{\widehat C^{(\delta \dot{\mathcal{H}},T)}_{d-1}(u)}{C^T_{d-1}(u)}\ \Big|\ C^T_{d-1}(u) > 0\bigg]\underset{\delta\rightarrow 0}{\longrightarrow} \frac{2d}{\beta_d},
\end{equation}
with  $X$ an isotropic random field on $\R^d$ satisfying Assumption~\ref{A0}  and  $\dot{\mathcal{H}}$  a point-referenced $d$-honeycomb as in Theorem~\ref{thm:honeycomb} \emph{i}). A  sketch  of the proof of the convergence in~\eqref{eqn:bias_converge} is provided in Appendix~\ref{sec:convergence_of_bias}.
\end{Remark}

\section{Joint central limit theorem for the hypercubic  lattice}\label{sec:TCL}

In this section we  prove a joint central limit theorem for the estimated volume and surface area of excursion sets of $d-$dimensional isotropic  smooth random fields observed over a  hypercubic lattice. Firstly, we rewrite estimators in~\eqref{GeneralLatticeEstimaor}-\eqref{eqCdHoneycomb} in this specific setting.

\subsection{Estimators for the hypercubic lattice}\label{hypercubic:section}

For $\textbf{i}\in\Z^d$ and $\delta \in\R^+$, let $V_{\ii}(\delta) := \delta \textbf{i} + [0,\delta]^d$. Define $\delta\mathcal{G} := \{V_{\ii}(\delta) : \ii\in\Z^d\} \in \mathfrak{T}^d$ (see Definition~\ref{def:lattice}) and the point-referenced $d$-honeycomb
$$\delta\dot{\mathcal{G}} := \{(V_{\ii}(\delta),\delta \textbf{i}): \textbf{i} \in \Z^d\}.$$
Define
\begin{align}\label{G}
\mathbb{G}(\delta,T) := \{\delta\ii : \ii\in\Z^d,V_{\ii}(\delta) \subseteq T\}\subset\delta\Z^d.
\end{align} 
 Then, referring to Definition~\ref{def:estGenTil}, we write
\begin{align}\label{eq:C2pixel}
\widehat C_{d}^{(\delta\dot{\mathcal{G}},T)}(u) &= \frac{\delta^d}{\sigma_d(T)}\sum_{t\in\mathbb{G}(\delta,T)} \I{X(t)\geq u},
 \end{align}
and
\begin{align}\label{eq:C1tildeNEW}
 \widehat C_{d-1}^{(\delta\dot{\mathcal{G}},T)}(u) = \frac{\delta^{d-1}}{\sigma_d(T)}
\sum_{j=1}^d\underset{t+\delta\be_j\in\mathbb{G}(\delta,T)}{\sum_{t\in\mathbb{G}(\delta,T)}} \big|\I{X(t) \geq u} - \I{X(t+\delta\be_j) \geq u}\big|. \end{align}

In the following, without loss of generality, we assume that  $T$ is centered at the origin.
Furthermore, we suppose that $0 < \delta \leq 1$ is chosen such that $T = [-\delta N, \delta N]^d$ for some $N\in\N^+$, which implies that $\bigcup_{P\in(\delta\mathcal{G})^T}P = T$, and that  the Card$\left(\mathbb G(\delta, T)\right)=(2N)^{d}$  and $\sigma_d(T)= (2 N \delta)^d$.   Remark that this constriction implies that the deterministic bias ratio in Equation~\eqref{eqbiaisarea}  is exactly unity.  \smallskip

In the sequel we simplify the  notation of Equations~\eqref{eq:C2pixel}-\eqref{eq:C1tildeNEW}   by writing $\widehat{C}_d^{(\delta, T)}(u)$ and $\widehat{C}_{d-1}^{(\delta, T)}(u)$, respectively.  Furthermore,  we write $T=T_N$ when the dependence in $N$ needs to be explicitly specified.

\subsection{The dominant role of the $L_1$-norm for the hypercubic lattice}

Note that, $C^T_{d-1}(u)$ in~\eqref{eq:UestCT1bis} can be rewritten,  by the Crofton formula in~\eqref{eqn:crofton}, as
\begin{equation}\label{eqn:P2_crofton}
C^T_{d-1}(u)  =  \frac{1}{\sigma_d(T)}\int_{L_{X}^{T}(u)}\sigma_{d-1}(\rmd s)=   \frac{\sqrt{\pi}\ \Gamma(\frac{d+1}{2})}{\sigma_d(T)\Gamma(\frac{d}{2})}
\int_{\R^{d-1}}\int_{\partial B^d_1} \frac{\sigma_0(L^{T}_X(u) \cap l_{\mathbf{s},\mathbf{v}_{\mathbf{s}}(\mathbf{u})})}{\sigma_{d-1}(\partial B^d_1)}\ \rmd \mathbf{s}\ \rmd\mathbf{u}, \, \,\, \, a.s.
\end{equation}
 A first intuition is that when $\delta$ gets small, the estimated surface area~\eqref{eq:C1tildeNEW} gets close  to~\eqref{eqn:P2_crofton}. However, this is incorrect and  we show in the following result that when $\delta$ gets small our estimate is close in the $L^{1}$ sense to the following random variable
\begin{align} \label{def:Pp}
\widetilde{C}^T_{d-1}(u)&:=\frac{1}{\sigma_d(T)} \int_{L^{T}_X(u)} \frac{\|\nabla X(s)\|_1}{\|\nabla X(s)\|_2}\ \sigma_{d-1}(\rmd s) =  \frac{1}{\sigma_d(T)} \int_{T}  \delta_0\big(X(t)-u\big)||\nabla X(t)||_1\, \rmd t,
\end{align}
where $\delta_0(\cdot)$ denotes the Dirac delta distribution.
The last equality is obtained by the well-known Coarea formula (see, \emph{e.g.}, Equation~(7.4.14) in \cite{AT07}). 
The difference between~\eqref{eqn:P2_crofton} and~\eqref{def:Pp}, that induces the asymptotic bias, is the ratio ${\|\nabla X(s)\|_1}/{\|\nabla X(s)\|_2}.$  A similar competing behavior between the $L^{1}$ and $L^{2}$ norms was already visible in Equation~\eqref{eq:ClaimP} which leads to
\begin{align}\label{norma1}
\lim_{\delta\to 0}\E\left[\widehat{C}_{d-1}^{(\delta, T_N)}(u)\right]&= p_{X(\0)}(u)\E\left[\|\nabla X(\0)\|_{1}|X(\0)=u\right],
\end{align}
with $\widehat{C}_{d-1}^{(\delta, T_N)}(u)$ as in~\eqref{eq:C1tildeNEW}. In addition, we see from the last equality in~\eqref{def:Pp} that
\begin{align*}
\E[\widetilde{C}^T_{d-1}(u)] &= \frac{1}{\sigma_d(T)}\int_{T}  \int_{-\infty}^\infty  \E\big[\delta_0\big(x-u\big)||\nabla X(t)||_1\ \big|\ X(t) = x\big]p_{X(\0)}(x)\,  \rmd x\,\rmd t\\
&= p_{X(\0)}(u)\E\left[\|\nabla X(\0)\|_{1}|X(\0)=u\right].
\end{align*}

Equation~\eqref{norma1}  should be put in parallel with the desired limit given by Rice's formula (see, \emph{e.g.}, Equation (6.27) in \cite{AW09}  or
Proposition 2.2.1 in \cite{berzin2017kac} (with $j=1$) written for a stationary process), \emph{i.e.},
\begin{align}\label{norma2}
\E[C^{T}_{d-1}(u)] &= p_{X(\0)}(u)\E[\|\nabla X(\0)\|_{2}|X(\0)=u],
\end{align}
with $C^{T}_{d-1}(u)$ as in~\eqref{eq:UestCT1bis}.
This  difference of norms in the limits in~\eqref{norma1} and~\eqref{norma2} motivates  the presence of the dimensional constant $\beta_d$ relating the $L_{1}$ and $L_{2}$ norms (see main Theorem~\ref{CLTjoint} and  Equation~\eqref{betad}).\\

Taking inspiration from the relationship between~\eqref{eq:UestCT1bis}  and~\eqref{eqn:P2_crofton}, the following proposition provides a similar result for $\widetilde{C}^T_{d-1}$ in~\eqref{def:Pp}  (first item) and a $L^1$ control between the estimator $\hat C^{(\delta,T)}_{d-1}$ and  $ \widetilde{C}^T_{d-1}$   (second item) as $\delta\to0$. To this end, we introduce the following  technical assumption.
\begin{enumerate}[label={($\mathcal{A}$3)}]
\item\label{A3}
Fix $j\in\{1,\ldots,d\}$. Let $f(t):=(X(t), \partial_{j}X(t))$ for $t\in\R^d$, where $\partial_{j}X$  denotes the partial derivative of $X$ in the direction $\be_j$. Fix $u\in\R$, and let $U\subset\R^2$ be a neighbourhood containing the point $(u,0)$. Suppose that the density of $f(\0)$ is bounded uniformly on $U$. 
Define
$$W := \sup_{t\in[0,1]^d}\max_{1\le k\le d}\big\{|\partial_{k}X(t)|, |\partial_{k}\partial_{j}X(t)|\big\}$$
and suppose that $\E[W^2\ |\ f(\0) = s] < \infty$ for all $s\in U$.

Similarly, suppose that for some interval $I\subset \R$ containing $u$, the marginal density of $X(\0)$  is bounded uniformly on $I$, 
and that $\E[\sup_{t\in[0,1]^d}||\nabla X(t)||_2\ |\ X(\0)=s]<\infty$,  for all $s\in I$.
\end{enumerate}

This assumption is central in the proof of the key Lemma~\ref{Lemma2} in Section~\ref{prf:secSA}. It allows for the use of techniques similar to the ones used in  \cite{Leadbetter} (see the proof of Proposition~\ref{LeadbetterTheorem} in Section~\ref{prf:secSA}).  Lemma~\ref{Lemma2} is used in the proof of the following proposition.

\begin{Proposition}\label{prp:Phat_L2_P1}
Let $X$ be a  random field on $\R^d$  as in Assumption~\ref{A0}.  It holds that
\begin{itemize}
  \item[i)]$\widetilde{C}^T_{d-1}(u)=\frac{1}{\sigma_d(T)}  \sum_{j=1}^d \int_{\vect(\be_j^\perp)} \sigma_0(L^T_X(u) \cap l_{\be_j, \mathbf{v}})\ \rmd \mathbf{v}$,  \, a.s.
\item[ii)] If, furthermore, $X$  satisfies Assumptions~\ref{A1} and~\ref{A2} for $\mathbf{w}=\be_{j}$, for each $1\le j\le d$, and for some $\eps\in (0,1)$, and~\ref{A3}, then there exists a constant $K$ such that for all $\delta \in (0,1)$
\begin{equation}\label{eq:Papproxcontrol}
\E\Big[\Big|\hat C^{(\delta,T)}_{d-1}(u)-\widetilde{C}^T_{d-1}(u) \Big|\Big]\le K \delta^{1-\eps}.
\end{equation}
\end{itemize}
\end{Proposition}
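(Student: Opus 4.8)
\textbf{Proof plan for Proposition~\ref{prp:Phat_L2_P1}.}

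For item i), the plan is to start from the Crofton-type decomposition already established in the proof of Theorem~\ref{prp:Pcrossing_Esurfacearea}, but applied direction by direction rather than after averaging over $\partial B_1^d$. Concretely, for each fixed $j\in\{1,\ldots,d\}$ I would apply the coarea formula to the function $t\mapsto$ (the $j$-th coordinate along lines parallel to $\be_j$) and recognize that $\int_{\vect(\be_j^\perp)}\sigma_0(L_X^T(u)\cap l_{\be_j,\mathbf v})\,\rmd\mathbf v$ counts, with multiplicity, the intersections of the level set with the pencil of lines in direction $\be_j$; this equals $\int_{L_X^T(u)} |\langle \nu(s),\be_j\rangle|\,\sigma_{d-1}(\rmd s)$ where $\nu(s)=\nabla X(s)/\|\nabla X(s)\|_2$ is the unit normal, by the standard area/coarea formula for the projection onto $\vect(\be_j^\perp)$ (essentially Cauchy's formula for the projected measure of a hypersurface). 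Summing over $j$ gives $\sum_{j=1}^d |\langle \nu(s),\be_j\rangle| = \|\nu(s)\|_1 = \|\nabla X(s)\|_1/\|\nabla X(s)\|_2$, which is exactly the integrand defining $\widetilde C^T_{d-1}(u)$ in~\eqref{def:Pp}. The a.s.\ finiteness and validity of these manipulations is guaranteed by Assumption~\ref{A0} (no critical points at level $u$, finite first and second moments of $\sigma_{d-1}(L_X^T(u))$).

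For item ii), the strategy mirrors the second part of the proof of Theorem~\ref{prp:Pcrossing_Esurfacearea}, but now we must track the $L^1$ distance between the honeycomb estimator and $\widetilde C^T_{d-1}(u)$ rather than a bias in expectation. Using the representation in item i) and the explicit form~\eqref{eq:C1tildeNEW} of $\hat C^{(\delta,T)}_{d-1}(u)$, I would write both quantities as sums over $j\in\{1,\ldots,d\}$ and over lines $l_{\be_j,\mathbf v}$ through the grid points, reducing everything to a one-dimensional estimate along each such line: for the restricted process $Y(t)=X(\mathbf v + t\be_j)$, compare $\delta^{-1}\sum_{\text{grid}} |\I{Y(t)\ge u}-\I{Y(t+\delta)\ge u}|$ (the discretized count of up/down-crossings) with $\int \delta_0(Y(t)-u)|Y'(t)|\,\rmd t = \sigma_0(L_Y(u))$ weighted appropriately. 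The discrepancy between the discrete crossing count and the true number of level crossings on each $\delta$-subinterval is controlled exactly as in Theorem~\ref{prp:Pcrossing_Esurfacearea}: multiple crossings in a single cell have probability $O(\delta^2)$ per cell by Lemma~\ref{lem:1D_small_sojourn} under~\ref{A1}, and the $L^1$ contribution of the excess count is handled by the H\"older argument using~\ref{A2}. The new ingredient is that we are not merely counting crossings but weighting the boundary by $\|\nabla X\|_1/\|\nabla X\|_2$; this is where Lemma~\ref{Lemma2} (which rests on~\ref{A3}) enters, providing a quantitative comparison — with error $O(\delta^{1-\eps})$ — between $\P(Y(0)\le u<Y(\delta))$ type probabilities and the corresponding conditional-expectation expressions $p_{X(\0)}(u)\E[(\partial_j X)^+ | X=u]$ that integrate up to $\widetilde C^T_{d-1}(u)$. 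Integrating the per-line estimate over $\mathbf v\in\vect(\be_j^\perp)$, summing over $j$, dividing by $\sigma_d(T)$, and collecting the error terms yields the bound $K\delta^{1-\eps}$.

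The main obstacle I anticipate is \emph{uniformity}: the one-dimensional estimates must hold with constants uniform in the line $\mathbf v$ (and in $j$), so that integrating over the $(d-1)$-dimensional transversal does not destroy the rate. This requires the uniform density bounds and uniform conditional-moment bounds built into Assumptions~\ref{A1}--\ref{A3} — in particular the condition $\|Y''\|_{\infty,[0,1]}\in L^2$ and the uniform-in-$u$ control on $g_q(u,z)$ — and it is precisely the role of Lemma~\ref{Lemma2} to package these into a usable quantitative statement. A secondary technical point is correctly accounting for the boundary grid cells near $\partial T$: since the number of such cells is $O(\delta^{-(d-1)})$ and each contributes $O(\delta^{d-1})\cdot O(1)$ to $\sigma_d(T)\hat C^{(\delta,T)}_{d-1}$, their total contribution is $O(1)$ in the unnormalized sum and hence $O(\sigma_d(T)^{-1}) = O(1)$ after normalization — one must check this does not spoil the $\delta^{1-\eps}$ rate, which it does not since $T$ is fixed while $\delta\to 0$.
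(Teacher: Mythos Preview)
Your plan for item i) is correct and essentially the paper's argument: both routes amount to the projection (Cauchy) formula $\int_{\vect(\be_j^\perp)}\sigma_0(L_X^T(u)\cap l_{\be_j,\mathbf v})\,\rmd\mathbf v=\int_{L_X^T(u)}|\langle\nu(s),\be_j\rangle|\,\sigma_{d-1}(\rmd s)$ followed by $\sum_j|\langle\nu,\be_j\rangle|=\|\nu\|_1$. The paper does this via polygonal approximation, your direct coarea route is equivalent.

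For item ii) there is a genuine gap. Once item i) is in hand, both $\widehat C^{(\delta,T)}_{d-1}(u)$ and $\widetilde C^T_{d-1}(u)$ are expressed purely in terms of crossing counts in the directions $\be_j$ --- no explicit $\|\nabla X\|_1/\|\nabla X\|_2$ weight remains --- so the ``new ingredient'' you describe does not exist. The actual two-step structure (per direction $\be_j$, reduced by stationarity to the unit cube) is:
\begin{enumerate}
\item[(a)] For each \emph{fixed} grid line $l_{\be_j,\mathbf v}$ with $\mathbf v\in\pi_j(G)$, compare the discrete crossing count with the true count $n_{\mathbf v}$. This is what you describe, using Lemma~\ref{lem:1D_small_sojourn} under~\ref{A1} and H\"older with~\ref{A2}. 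By stationarity the constants are automatically independent of $\mathbf v$, so the uniformity you worry about is free.
\item[(b)] Compare the \emph{Riemann sum} $\delta^{d-1}\sum_{\mathbf v\in\pi_j(G)}n_{\mathbf v}$ with the \emph{integral} $\int_{\vect(\be_j^\perp)\cap[0,1]^d}n_{\mathbf s}\,\rmd\mathbf s$. This is the step you miss: the estimator only sees lines through grid points, whereas item i) integrates over all $\mathbf v$.
\end{enumerate}
Lemma~\ref{Lemma2} is used in step (b), not to relate crossing probabilities to $p_{X(\0)}(u)\E[(\partial_jX)^+\mid X=u]$ as you suggest. What it actually says is that $\P\big(n_{\mathbf v}\text{ is not constant for }\mathbf v\in\pi_j([0,\delta]^d)\big)\le K\delta$; this is exactly the control needed to bound the Riemann-sum error, via another H\"older application against~\ref{A2}. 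Assumption~\ref{A3} enters only here, to bound the probability that the level set has a ``fold'' (a point where $\partial_jX=0$) or hits $\partial[0,1]^d$ within a transversal $\delta$-box. Your proposal conflates Lemma~\ref{Lemma2} with a Leadbetter-type density comparison, which plays no role in this proof.
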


The proof of Proposition~\ref{prp:Phat_L2_P1}  in postponed to Section~\ref{proofProp31}.
Remark that isotropy is not required in Proposition~\ref{prp:Phat_L2_P1}.

\subsection{Strong alpha mixing random fields}

We present and discuss sufficient hypotheses
to prove the asymptotic normality in Theorem~\ref{CLTjoint}, below. We impose some  spatial asymptotic independence conditions for the   random field $X$ which also apply to integrals of  continuous functions over the level-curves of $X$ (such as the surface area in~\eqref{eq:UestCT1bis}). To this end,  mixing conditions are particularly appropriate (see for instance \citet{cabana1987affine}, \citet{Iribarren}).

\begin{Definition}[Strongly mixing random field]\label{mixingfield}
Let $X:= \{X(t): t \in \R^d\}$ be a random field satisfying Assumption~\ref{A0}, and let  $\sigma_U:=\sigma\{X(t): t \in U\}$ for a subset $U \subset \R^d$, \emph{i.e.}  the $\sigma-$field generated by $\{X(t): t \in U\}$. We define the following mixing coefficient for Borelian subsets $U, V \subset \R^d$,
\begin{align}\label{mixing}
\alpha(U, V) := \sup_{A \in \sigma_U, \,  B \in \sigma_V}\{|\P(A \cap B)-  \P(A)\P(B)|\}.
\end{align}
Further we define
$$\alpha(s) := \sup\{\alpha(U, V) : d_2(U, V)>s\},$$
where $d_2(U,V) :=\inf\{||u-v||_2:u\in U, v\in V\}$. A random field $X$ is said to be strongly mixing   if $\alpha(s) \rightarrow 0$ for $s\rightarrow \infty$.
\end{Definition}
Existing results that establish asymptotic normality of geometric quantities classically rely on the con\-ti\-nu\-ous observation of $X$ on $T$ and on a quasi-association  notion of dependence  (see \cite{alexander2007limit}, \cite{MESCHENMOSER}, \cite{bulinski2012central}, \cite{Spodarev13}) or are specific to Gaussian random fields (see \cite{meschenmoser2013functional}, \cite{Mu16},  \cite{Kratz2018}). There are also results for fields observed on the fixed lattice grid $\Z^{d}$ in \cite{RVY}, where the notion of clustering spin model is introduced and is implied by either mixing assumptions or quasi-association. Finally, it is worth noting that \cite{bulinski2010central} proposes a limit theorem for the empirical mean of $X$ observed on a grid with vanishing size and large $T$, \textit{i.e.} the same grid of observation $\mathbb{G}(\delta,T)$ as in the present work.

Corollary 3 in \cite{dedecker1998central}  (see also \cite{Bolthausen}) proposes minimal  $\alpha$-mixing conditions  to get  central limit results for stationary random fields observed on the fixed lattice $\Z^{d}$. However, we can not rely on such results as we are interested in non-trivial functionals of the field and we aim at imposing the conditions on the underlying field $X$ and not on the observed sequence on the varying lattice $\delta\Z^{d}.$ Instead,  to establish Theorem~\ref{CLTjoint} below, we heavily rely on the inheritance properties of mixing sequences: if a random field $X$ is strongly alpha-mixing (see Definition  \ref{mixingfield}), so is any measurable transformation of it. The latter property does not hold for quasi-association properties. Examples of mixing random fields include Gaussian random fields (see \cite{doukhan1994mixing} Section 2.1 and Corollary 2 for an explicit control of~\eqref{mixing}), and therefore any transformation of  Gaussian random fields such as Student or chi-square fields, or  Max-infinitely divisible random fields  (see \cite{dombry2012strong}). We refer the reader to  \cite{bradley2005basic} for other examples of processes satisfying various mixing conditions.

\subsection{Joint Central Limit Theorem}

In the following, $\stackrel{\mathcal{L}}\longrightarrow$ denotes the convergence in law.
Taking advantage of the results of Section~\ref{sec:BiasSA} and adapting the results of  \citet{Iribarren}, we state the following joint limit result for the estimated volume and  estimated  surface area.

\begin{Theorem}[Joint central limit theorem for  $\widehat{C}_d^{(\delta, T_N)}$ and $\widehat{C}_{d-1}^{(\delta, T_N)}$]\label{CLTjoint}
Let $X$ be an isotropic random field satisfying Assumptions~\ref{A0},~\ref{A1},~\ref{A2} for some $\eps\in(0,1)$,  and~\ref{A3}. Assume that  $X$   is  strongly mixing as in Definition~\ref{mixingfield} and that for some $\eta >0$, the mixing coefficients satisfy the rate condition
\begin{align*}
 \sum_{r=1}^{+\infty} r^{3 d-1} \alpha(r)^{\frac{\eta}{2+\eta}} < + \infty,
\end{align*}
and  $\E[\sigma_{d-1}\big(L_X(u) \cap [0,1]^{d}\big)^{2+\eta}] < + \infty$.
Let $(T_N)_{N\geq 1}$ be a sequence of hypercubes in $\R^d$ such that $\sigma_d(T_N)= (2 N \delta)^{d}$.
Define $$\widehat{C}^{(\delta, T_N)}(u) := \left(\widehat{C}_d^{(\delta, T_N)}(u), \widehat{C}_{d-1}^{(\delta, T_N)}(u)\right)^t  \quad  \mbox{ and }\quad C^*(u):=\left(C_d^{*}(u),  \frac{2 d}{\beta_d} C_{d-1}^{*}(u) \right)^t,$$
with $\widehat{C}_d$ (resp. $\widehat{C}_{d-1}$) as in~\eqref{eq:C2pixel} (resp. in~\eqref{eq:C1tildeNEW}) on the hypercubic lattice $\mathbb{G}(\delta, T)$ in~\eqref{G}, $C_k^{*}(u)$ as in~\eqref{defCast}, where  $(\cdot)^t$ denotes the matrix transposition and $\beta_{d}$ is as in~\eqref{betad}.  Then, there exists a finite covariance matrix $\Sigma(u)$  such that, if  $\Sigma_{(i,j)}(u) >0$, $\forall \, 1\leq i,j \leq 2$, it holds
\begin{align*} 
\sqrt{\sigma_d(T_N)} \left(\widehat{C}^{(\delta, T_N)}(u) - C^*(u)\right) & \stackrel{\mathcal{L}}\longrightarrow \mathcal{N}_{2}\big(0,\Sigma(u)\big),\end{align*}
as $\delta\rightarrow 0$, $N \delta\rightarrow\infty$, and   $(N\delta)^{d}\delta^{1-\eps}\to 0$, with $\eps$ as in Assumption~\ref{A2}, with variances
\begin{align*} 
\Sigma_{(1,1)}(u)&= \int_{\R^d} \Cov(\I{X(\0) \geq u}, \I{X(t)  \geq u}) \rmd t, \\
\Sigma_{(2,2)}(u)&= \int_{\R^d} \Big[p_{X(\0),X(t)}(u,u)\E\big[||\nabla X(\0)||_1 ||\nabla X(t)||_1\ \big|\ X(\0) = X(t) = u\big]\\
&\qquad- \Big(p_{X(\0)}(u)\E\big[||\nabla X(\0)||_1\ \big|\ X(\0) = u\big]\Big)^2 \Big] \rmd t.
\end{align*}
and covariances
$\Sigma_{(1,2)}(u)= \lim\limits_{N \rightarrow\infty} \sigma_d(T_N) \Cov(C^{T_N}_d(u), C^{T_N}_{d-1}(u))$.
\end{Theorem}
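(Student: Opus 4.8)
\textbf{Plan of proof for Theorem~\ref{CLTjoint}.}
The strategy is to reduce the joint statement to a central limit theorem for a single, conveniently chosen random field observed on the vanishing grid, and then to transfer the asymptotics back to the estimators of interest via the $L^1$ controls already established. First I would replace $\widehat{C}_{d-1}^{(\delta,T_N)}(u)$ by the surrogate quantity $\widetilde{C}^{T_N}_{d-1}(u)$ of~\eqref{def:Pp}: by Proposition~\ref{prp:Phat_L2_P1}~\emph{ii)} one has $\E[|\widehat{C}_{d-1}^{(\delta,T_N)}(u)-\widetilde{C}^{T_N}_{d-1}(u)|]\le K\delta^{1-\eps}$, so that after multiplication by $\sqrt{\sigma_d(T_N)}=(2N\delta)^{d/2}$ the difference is $O\big((N\delta)^{d/2}\delta^{1-\eps}\big)$, which tends to $0$ under the assumed regime $(N\delta)^d\delta^{1-\eps}\to 0$ (note $(N\delta)^{d/2}\delta^{1-\eps}\le (N\delta)^d\delta^{1-\eps}$ eventually since $N\delta\to\infty$). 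By Slutsky's theorem it therefore suffices to prove the joint CLT for the pair $\big(\widehat{C}_d^{(\delta,T_N)}(u),\widetilde{C}^{T_N}_{d-1}(u)\big)$ centered at $\big(C_d^*(u), p_{X(\0)}(u)\E[\|\nabla X(\0)\|_1\mid X(\0)=u]\big)$; recall that the second centering constant equals $\tfrac{2d}{\beta_d}C_{d-1}^*(u)$ by the computation following~\eqref{def:Pp} and the Crofton identity~\eqref{eqn:P2_crofton} combined with isotropy (this is exactly where $\beta_d$ enters).

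Next I would recast both coordinates as additive functionals of $X$ over the growing box, suitable for an $\alpha$-mixing CLT. The volume estimator $\widehat{C}_d^{(\delta,T_N)}(u)=\frac{\delta^d}{\sigma_d(T_N)}\sum_{t\in\mathbb{G}(\delta,T_N)}\I{X(t)\ge u}$ is a Riemann-type sum of the indicator functional, while $\widetilde{C}^{T_N}_{d-1}(u)=\frac{1}{\sigma_d(T_N)}\int_{T_N}\delta_0(X(t)-u)\|\nabla X(t)\|_1\,\rmd t$ is an integral functional of the level set. Following \citet{Iribarren}, I would tile $T_N$ into unit cubes indexed by $\mathbf{k}\in\Z^d\cap T_N$ and write $\sqrt{\sigma_d(T_N)}\big(\widehat{C}^{(\delta,T_N)}(u)-C^*(u)\big)=\frac{1}{\sqrt{\sigma_d(T_N)}}\sum_{\mathbf{k}}\big(\mathbf{Z}_{\mathbf{k}}^{(\delta)}-\E\mathbf{Z}_{\mathbf{k}}^{(\delta)}\big)+o_{\PP}(1)$, where $\mathbf{Z}_{\mathbf{k}}^{(\delta)}\in\R^2$ collects the contributions of the two functionals coming from the $\mathbf{k}$-th cube. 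Each $\mathbf{Z}_{\mathbf{k}}^{(\delta)}$ is a measurable functional of $X$ restricted to a bounded neighbourhood of the cube, hence inherits the $\alpha$-mixing property of $X$ with the same rate; the moment hypothesis $\E[\sigma_{d-1}(L_X(u)\cap[0,1]^d)^{2+\eta}]<\infty$, together with the conditional-moment bounds in Assumption~\ref{A3} and the $(2+\eta)$-integrability of $\|\nabla X\|_2$ on the level set, gives the uniform $(2+\eta)$-moment bound on $\|\mathbf{Z}_{\mathbf{k}}^{(\delta)}\|$ needed for a Lyapunov-type condition. Then I would invoke the Bolthausen--type CLT for $\alpha$-mixing random fields on $\Z^d$ (as in \citet{Bolthausen}, or Corollary~3 of \citet{dedecker1998central}), whose summability requirement is precisely $\sum_r r^{3d-1}\alpha(r)^{\eta/(2+\eta)}<\infty$, applied to the $\R^2$-valued array; by the Cramér--Wold device this reduces to the one-dimensional $\alpha$-mixing CLT for each linear combination $a\,Z^{(\delta)}_{\mathbf{k},1}+b\,Z^{(\delta)}_{\mathbf{k},2}$.

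The main obstacle I anticipate is the identification and finiteness of the limiting covariance matrix $\Sigma(u)$, and in particular the interchange of the $\delta\to 0$ limit with the growing-domain limit. For the diagonal entries one expects, by stationarity, $\sigma_d(T_N)\Var(\widehat{C}_d^{(\delta,T_N)}(u))\to\int_{\R^d}\Cov(\I{X(\0)\ge u},\I{X(t)\ge u})\,\rmd t$ and, using Lemma~\ref{Lemma2} and the Kac--Rice second moment formula for level sets, $\sigma_d(T_N)\Var(\widetilde{C}^{T_N}_{d-1}(u))\to\Sigma_{(2,2)}(u)$ with the stated kernel $p_{X(\0),X(t)}(u,u)\E[\|\nabla X(\0)\|_1\|\nabla X(t)\|_1\mid X(\0)=X(t)=u]-\big(p_{X(\0)}(u)\E[\|\nabla X(\0)\|_1\mid X(\0)=u]\big)^2$; absolute integrability of this kernel must be extracted from the mixing rate (which controls the covariance at large $|t|$) and from a local second-moment bound near the diagonal (handled by Assumption~\ref{A3} preventing a pole in the joint density of $(X(\0),X(t))$ as $t\to\0$, exactly as in the positive-reach / Kac--Rice estimates invoked after Definition~\ref{defLK}). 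The off-diagonal term $\Sigma_{(1,2)}(u)=\lim_N\sigma_d(T_N)\Cov(C^{T_N}_d(u),C^{T_N}_{d-1}(u))$ requires showing that the discrete covariances $\sigma_d(T_N)\Cov(\widehat{C}_d^{(\delta,T_N)}(u),\widetilde{C}^{T_N}_{d-1}(u))$ converge to the same limit as $\delta\to0$, $N\delta\to\infty$; here I would again use the $L^1$ bound of Proposition~\ref{prp:Phat_L2_P1}~\emph{ii)} to replace $\widetilde{C}_{d-1}$ by $\widehat{C}_{d-1}$ inside the covariance (controlling the cross terms by Cauchy--Schwarz and the uniform $L^2$ bounds), together with a dominated-convergence argument justified by the mixing-rate bound on $\Cov(\I{X(\0)\ge u},\cdot)$ over translates. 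A careful double-limit bookkeeping — keeping $\delta$ small enough relative to $N$ at each stage so that grid errors vanish before the domain grows — is the delicate part; once the covariance structure is pinned down and shown finite, the multivariate CLT and Slutsky conclude the proof under the nondegeneracy hypothesis $\Sigma_{(i,j)}(u)>0$.
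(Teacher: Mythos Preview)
Your overall strategy---replace $\widehat{C}_{d-1}^{(\delta,T_N)}$ by $\widetilde{C}^{T_N}_{d-1}$ via Proposition~\ref{prp:Phat_L2_P1}, tile $T_N$ into unit cubes, and apply an $\alpha$-mixing CLT combined with Cram\'er--Wold---is essentially the paper's. The one substantive divergence is that you keep the first coordinate as the \emph{grid} estimator $\widehat{C}_d^{(\delta,T_N)}$, so your block variables $\mathbf{Z}_{\mathbf{k}}^{(\delta)}$ depend on $\delta$. This forces you into a triangular-array CLT, and the Bolthausen/Dedecker results you cite are stated for a \emph{fixed} stationary field on $\Z^d$; the ``double-limit bookkeeping'' you flag as delicate is a genuine gap, not just a bookkeeping nuisance.

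The paper sidesteps this entirely by replacing \emph{both} coordinates with $\delta$-free continuous functionals before the CLT step: it uses Proposition~\ref{prop:AireI1} (the $L^1$ bound $\E|\widehat{C}_d^{(\delta,T)}(u)-C_d^T(u)|\le K\delta$) to swap $\widehat{C}_d^{(\delta,T_N)}$ for $C_d^{T_N}$, in addition to the swap you already made for the surface area. The resulting pair $C^{T_N}(u)=(C_d^{T_N}(u),\widetilde{C}_{d-1}^{T_N}(u))^t$ then decomposes over unit cubes into a stationary $\Z^d$-indexed field $\xi_{\mathbf{i}}$ that does not depend on $\delta$ at all, so the CLT of \citet{Iribarren} (whose mixing hypothesis is exactly $\sum_r r^{3d-1}\alpha(r)^{\eta/(2+\eta)}<\infty$) applies directly (this is Theorem~\ref{CLTjointContinous}). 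The full decomposition is $I_1+I_2+I_3+I_4$ with $I_2$ the centred continuous pair (CLT), $I_1,I_3$ the discretisation errors (Propositions~\ref{prp:Phat_L2_P1} and~\ref{prop:AireI1}), and $I_4$ the residual bias handled by Corollary~\ref{cor:sqr}. Your covariance identification and moment checks are correct in spirit; the missing ingredient is simply to discretise the volume coordinate \emph{first} so that no $\delta$ survives into the CLT step.
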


\begin{proof}[Proof of Theorem~\ref{CLTjoint}]
Let $C^{T_N}(u):=\left(C^{T_N}_d(u), \widetilde{C}^{T_N}_{d-1}(u)\right)^t$, with $C^{T_N}_d(u)$ as in~\eqref{eq:UestCT2} and $\widetilde{C}^{T_N}_{d-1}(u)$ as in~\eqref{def:Pp}. We decompose
\begin{align*}
\sqrt{\sigma_d(T_N)} \Big(\widehat{C}^{(\delta, T_N)}(u)& - C^*(u)\Big)
=  \sqrt{\sigma_d(T_N)} \left(\widehat{C}^{(\delta, T_N)}(u) - C^{T_N}(u)\right)   +
\sqrt{\sigma_d(T_N)} \left(C^{T_N}(u) - \E[C^{T_N}(u)]\right) \\
&+  \sqrt{\sigma_d(T_N)} \left(\E[C^{T_N}(u)] - \E[\widehat{C}^{(\delta, T_N)}(u)])\right) + \sqrt{\sigma_d(T_N)} \left(\E[\widehat{C}^{(\delta, T_N)}(u)] - C^*(u)\right)\\ &\hspace{1.5cm}:={I_1}+{I_2}+{I_3}+{I_4}.
\end{align*} 
From Proposition~\ref{prop:AireI1}, we get that first coordinates of $I_{1}$ and  $I_3$ go in probability to zero. The second coordinates are handled with    Proposition~\ref{prp:Phat_L2_P1}. It follows that $I_1 \stackrel{\PP}\longrightarrow 0$ and $I_3 \stackrel{\PP}\longrightarrow 0$, for  $N \delta\rightarrow\infty$, $\delta\rightarrow 0$ and $(N\delta)^{d}\delta^{1-\eps}\to 0$ as $N\to\infty$, with $\eps$ as in Assumption~\ref{A2}.\\
The joint central limit theorem under mixing conditions  (see  Theorem~\ref{CLTjointContinous},   for $n=N\delta$) for the random vector  $C^{T_N}(u)$ gives that  $I_2\stackrel{\mathcal{L}}\longrightarrow \mathcal{N}_{2}\big(0,\Sigma(u)\big)$. The given asymptotic variances  come  from \cite{cotsakis2022perimeter} (Equation~(11)), Equation~\eqref{def:Pp} and  Equation~\eqref{CovarianceBulinski}.
Notice that    $\widehat{C}_d^{(\delta, T)}(u)$ in~\eqref{eq:C2pixel} does not generate  any bias in the estimation of  the volume measure $\sigma_{d}$ (see~\eqref{areadensity}), so the first coordinate of $I_4$ is trivially equal to zero. The second coordinate of  $I_4$ is obtained via Corollary~\ref{cor:sqr}.  Then, by  Slutsky's Theorem,  we obtain the  result.
\end{proof}

All auxiliary results necessary for the proof of the  joint central limit theorem for  $(\widehat{C}_d^{(\delta, T_N)}, \widehat{C}_{d-1}^{(\delta, T_N)})$  are provided in Section~\ref{proofCTLSection}.

\begin{Remark}
The regime restriction $(N\delta)^{d}\delta^{1-\eps}\to 0$ (where $\eps$ can be small)  is imposed by Proposition~\ref{prp:Phat_L2_P1} and Proposition~\ref{prop:AireI1}. It can be  improved by requiring more stringent assumptions: \emph{e.g.}, if $X$ is quasi-associated (see \cite{alexander2007limit} and \cite{bulinski2012central}) and under a decay assumption for its correlation function (see  Appendix~\ref{QuasiAssociativitySection}). However, improving the rate in Proposition~\ref{prp:Phat_L2_P1} seems much more delicate and is out of the scope of the present work.

Generalizing Theorem~\ref{CLTjoint} to point clouds other than hypercubic lattices requires that one first identifies the  associated random variable when $\delta$ gets small, \textit{i.e.} an analog to~\eqref{def:Pp}, and that one proves an analog of~\eqref{eq:Papproxcontrol}.  In this sense, this asymptotic result is lattice-dependent and generalizing it to general tessellations is also an interesting open point.
\end{Remark}

 \section{Additional results and proofs associated to Section~\ref{sec:BiasSA}\label{prf:secSA}}

 \subsection{Auxiliary lemmas on crossings}\label{sectionLemma}

The following technical lemma controls the probability that a one-dimensional random process crosses the level $u$ more than once on a small interval of length $t$.

\begin{Lemma}[Level crossings of random processes]\label{lem:1D_small_sojourn}
Let $Y = \{Y(s): s\in\R\}$ be a one-dimensional stationary random process with twice continuously differentiable sample paths. Suppose that the probability density function of $Y(0)\big|\{Y'(0)=0\}$ is uniformly bounded by $M < \infty$ and that $\|Y''\|_{\infty,[0,1]}\in L^p$ for some $p> \frac12$. For $u\in\R$, let $Y^{-1}(u) := \{s\in\R:Y(s) = u\}$. Then, there exists a constant $K\in\R^+$ such that
\begin{equation*} 
\P\big({\rm Card}\left(Y^{-1}(u)\cap[0,t]\right)\ge 2\big) \leq K t^{\frac{3p}{p+1}},
\end{equation*}
for all $t\in[0,1]$.
\end{Lemma}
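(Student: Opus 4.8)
The plan is to bound the probability that $Y$ has at least two roots of $Y-u$ in $[0,t]$ by splitting according to whether the derivative $Y'$ is large or small somewhere on the interval. Concretely, fix a threshold $a>0$ to be optimized later as a power of $t$, and write
\[
\P\big({\rm Card}(Y^{-1}(u)\cap[0,t])\ge 2\big) \le \P\big(\{{\rm Card}(Y^{-1}(u)\cap[0,t])\ge 2\}\cap\{\|Y'\|_{\infty,[0,t]}\le a\}\big) + \P\big(\|Y'\|_{\infty,[0,t]}> a\big).
\]
The second term is the easy one: by the mean value theorem, if $\|Y'\|_{\infty,[0,t]}>a$ then either $|Y'(0)|>a/2$, or $Y'$ changes by more than $a/2$ on $[0,t]$, which forces $\|Y''\|_{\infty,[0,t]}\ge (a/2)/t$. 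So this term is bounded by $\P(|Y'(0)|>a/2) + \P(\|Y''\|_{\infty,[0,1]} \ge a/(2t))$; the first probability is $O(1)$ but actually we can take $a$ not too small so it's controlled, and the second is bounded via Markov's inequality applied to the $L^p$ norm, giving something like $(t/a)^p \,\E[\|Y''\|_{\infty,[0,1]}^p]$.

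For the first term, the point is that if $Y$ hits $u$ twice in $[0,t]$, say at $s_1<s_2$, then by Rolle's theorem there is a point $s^*\in(s_1,s_2)$ with $Y'(s^*)=0$, and moreover at that point $|Y(s^*)-u| = |Y(s^*)-Y(s_1)| \le \|Y'\|_{\infty,[0,t]}\cdot t \le a t$ on the event $\{\|Y'\|_{\infty,[0,t]}\le a\}$. Thus on this event there exists $s^*\in[0,t]$ with $Y'(s^*)=0$ and $Y(s^*)\in[u-at,u+at]$. I would then estimate the probability of the event $\{\exists s^*\in[0,t]: Y'(s^*)=0,\ |Y(s^*)-u|\le at\}$ using a Kac–Rice / Markov-type bound: the expected number of zeros of $Y'$ in $[0,t]$ lying in the strip $\{|Y-u|\le at\}$ is, by the Kac–Rice formula (or by a direct discretization/Markov argument using that the density of $(Y(0),Y'(0))$ is bounded and $Y''$ is integrable), bounded by a constant times $t\cdot (2at) = 2a t^2$, using precisely the hypothesis that the conditional density of $Y(0)$ given $Y'(0)=0$ is uniformly bounded by $M$. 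Hence the first term is $O(a t^2)$.

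Combining, the total bound is of order $a t^2 + (t/a)^p$. Optimizing over $a$: setting $a t^2 = (t/a)^p$ gives $a^{p+1} = t^{p-2}$, i.e. $a = t^{(p-2)/(p+1)}$, and the common value is $a t^2 = t^{(p-2)/(p+1)+2} = t^{(3p)/(p+1)}$, which is exactly the claimed exponent. One must check this choice of $a$ is legitimate, e.g. that $a>0$ and that the crude estimate $\P(|Y'(0)|>a/2)$ does not spoil things; since we only need the bound for $t\in[0,1]$ and the stated exponent, we may absorb constants and, if necessary, restrict to $t$ small (the bound being trivial for $t$ bounded away from $0$ by enlarging $K$). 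The main obstacle I anticipate is making the Kac–Rice-type estimate for $\{\exists s^*: Y'(s^*)=0,\ |Y(s^*)-u|\le at\}$ rigorous with only the stated regularity — the cleanest route is probably a discretization argument paralleling the one already used in the proof of Theorem~\ref{prp:Pcrossing_Esurfacearea}, combining the uniform bound on the conditional density of $Y(0)$ given $Y'(0)=0$ with the $L^p$ (hence $L^{1}$, since $p>1/2$ may need strengthening to integrability of $Y''$, which holds as $\|Y''\|_{\infty,[0,1]}\in L^p$) control on $Y''$ to handle the event that $Y'$ has a near-tangency.
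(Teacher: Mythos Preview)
Your overall architecture is right and the optimization yields the correct exponent $3p/(p+1)$, but there is a genuine gap in the control of the ``large derivative'' term. You bound $\P(\|Y'\|_{\infty,[0,t]}>a)$ by $\P(|Y'(0)|>a/2) + \P(\|Y''\|_{\infty,[0,1]}\ge a/(2t))$. The second piece is handled by Markov, but the first piece $\P(|Y'(0)|>a/2)$ is not controlled by any hypothesis of the lemma: there is no moment assumption on $Y'(0)$. Worse, with your optimal choice $a=t^{(p-2)/(p+1)}$ this exponent is nonnegative once $p\ge 2$, so $a$ stays bounded (or tends to $0$) and $\P(|Y'(0)|>a/2)$ does not vanish at all; even for $1/2<p<2$ you would need a specific tail rate on $Y'(0)$ to match $t^{3p/(p+1)}$, and none is available. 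You yourself flag this as something to ``check'', but it is not checkable under the stated assumptions.

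The fix is to bypass $\|Y'\|_{\infty,[0,t]}$ entirely. On the event of two crossings you already have, by Rolle, a point $c\in[0,t]$ with $Y'(c)=0$; Taylor expanding $Y$ around $c$ (rather than bounding $|Y(c)-u|$ by $t\|Y'\|_\infty$) gives directly $|Y(c)-u|\le \tfrac{t^2}{2}\|Y''\|_{\infty,[0,1]}$. One then splits on the size of $\|Y''\|_{\infty,[0,1]}$ alone: either $\|Y''\|_{\infty,[0,1]}$ is large (Markov with the $L^p$ bound), or it is small and hence some zero $c$ of $Y'$ in $[0,t]$ satisfies $|Y(c)-u|\le$ (small). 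The latter event is bounded, exactly as you suggest, by $\E[\mathrm{Card}\{c\in[0,t]:Y'(c)=0\}]\cdot \P(|Y(0)-u|\le \text{threshold}\mid Y'(0)=0)$, invoking linearity in $t$ of the expected number of critical points and the uniform bound $M$ on the conditional density. This is precisely the paper's route; your decomposition via $\|Y'\|_\infty$ is an unnecessary detour that introduces the uncontrollable $Y'(0)$ term.
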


\begin{proof}[Proof of Lemma~\ref{lem:1D_small_sojourn}]
If there are to exist two points $s_1,s_2\in [0,t],\ s_1<s_2$ such that $Y(s_1)=Y(s_2)=u$, then by Rolle's theorem, there exists a $c\in[s_1,s_2]$ such that $Y'(c)=0$. Moreover, by Taylor's theorem,
$$Y(s_2) = u = Y(c) + \frac{(s_2 - c)^2}{2}Y''(\eta),$$
for some $\eta\in[c,s_2]$, which implies
$2|Y(c) - u| \leq t^2\|Y''\|_{\infty,[0,1]}$,  where as $Y$ is twice continuously differentiable, $ \|Y''\|_{\infty,[0,1]}<\infty$ almost surely. It implies that for any $x\in(0,1)$,
$$\{{\rm Card}\left(Y^{-1}(u)\cap[0,t]\right)\ge 2\}\subseteq \{\|Y''\|_{\infty,[0,1]}\geq (2/t^2)^x\} \cup \{\exists c\in[0,t] : Y'(c)=0, |Y(c)-u|\leq (t^2/2)^{1-x}\}.$$
To control the probability of the first event, Markov's inequality gives
$$\P\big(\|Y''\|_{\infty,[0,1]}\geq (2/t^2)^x\big) = \P\big(\|Y''\|_{\infty,[0,1]}^p\geq (2/t^2)^{px}\big)\leq \Big(\frac{t^2}2\Big)^{px}\E[\|Y''\|_{\infty,[0,1]}^p].$$
As for the second event, we write
\begin{align*}
\P\big(\exists c\in[0,t] :& Y'(c)=0, |Y(c)-u|\leq (t^2/2)^{1-x}\big)\\
&= \P\big({\rm Card}\left(\{c\in[0,t] : Y'(c)=0\}\right)\ge1\big)
\P\big(|Y(s)-u|\leq (t^2/2)^{1-x}\ |\ Y'(s)=0, s\in[0,t]\big)\\
&\leq \E\big[\mathrm{Card}\left(\{c\in[0,t] : Y'(c)=0\}\right)\big]\P\big(|Y(s)-u|\leq (t^2/2)^{1-x}\ |\ Y'(s)=0\big).
\end{align*}
By Theorem 3.3.1 in \cite{buetler1966}, $\E[\mbox{Card}(\{c\in[0,t] : Y'(c)=0\})] = t\E[\mbox{Card}(\{c\in[0,1] : Y'(c)=0\})]$. In addition, $\P\big(|Y(s)-u|\leq (t^2/2)^{1-x}\ |\ Y'(s)=0\big) \leq 2M(t^2/2)^{1-x}$ as $Y$ is stationary and $ Y(0)\big|\{Y'(0)=0\}$ has bounded density. In total, we have shown that
$$\P\big({\rm Card}\left(Y^{-1}(u)\cap[0,t]\right)\ge 2\big) \leq \Big(\frac{t^2}2\Big)^{px}\E[\|Y''\|_{\infty,[0,1]}^p] + 2M\E[\mbox{Card}(\{c\in[0,1] : Y'(c)=0\})]t\Big(\frac{t^2}{2}\Big)^{1-x}.$$
Thus, optimizing in $x$ leads to consider $x:= 3/(2p+2)\in(0,1)$, if $p>\frac12$, and to the result.
\end{proof}

The following lemma provides a bound on the crossing probability for general, possibly anisotropic,  random fields.

\begin{Lemma}\label{lem:crossing_bound}
Let $X$ be a random field on $\R^d$ satisfying~\ref{A0}. Fix $t\in\R^d$ and $u\in \R$. Then,
$$\P\left(X(\0) \leq u < X(t)\right ) \leq C_{d-1}^*(u)\|t\|_2.$$
\end{Lemma}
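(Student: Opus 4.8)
The plan is to reduce the anisotropic bound to the isotropic first‑order estimate from Theorem~\ref{prp:Pcrossing_Esurfacearea} via a discretization (telescoping) argument along the segment joining $\0$ to $t$, exactly in the spirit of the monotonicity argument used to prove~\eqref{eqn:monotonicity_in_q}. Write $\mathbf{w} := t/\|t\|_2 \in \partial B_1^d$ and $q := \|t\|_2$, so that $t = q\mathbf{w}$. For $n\in\N^+$, subdivide $[\0,t]$ into $n$ equal sub‑segments with endpoints $\mathbf{x}_k := \tfrac{k}{n}t$, $k=0,\dots,n$. If $X(\0)\le u < X(t)$, then somewhere along the chain the field must cross from $\le u$ to $>u$, so
\begin{equation*}
\{X(\0)\le u < X(t)\} \subseteq \bigcup_{k=0}^{n-1}\{X(\mathbf{x}_k)\le u < X(\mathbf{x}_{k+1})\},
\end{equation*}
and by the union bound together with stationarity (Assumption~\ref{A0}),
\begin{equation*}
\P\left(X(\0)\le u < X(t)\right) \le n\,\P\left(X(\0)\le u < X\big(\tfrac{q}{n}\mathbf{w}\big)\right).
\end{equation*}

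Next I would pass to the limit $n\to\infty$. By the first‑order estimate~\eqref{eqn:first_item_thm2.1} in Theorem~\ref{prp:Pcrossing_Esurfacearea} (which applies since $X$ is isotropic and satisfies~\ref{A0}), we have $\tfrac{n}{q}\,\P(X(\0)\le u < X(\tfrac{q}{n}\mathbf{w})) \to C_{d-1}^*(u)/\beta_d$ as $n\to\infty$, hence
\begin{equation*}
n\,\P\left(X(\0)\le u < X\big(\tfrac{q}{n}\mathbf{w}\big)\right) \xrightarrow[n\to\infty]{} \frac{C_{d-1}^*(u)}{\beta_d}\,q.
\end{equation*}
Combining the two displays gives $\P(X(\0)\le u < X(t)) \le \tfrac{1}{\beta_d}C_{d-1}^*(u)\|t\|_2$. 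Since $\beta_d = 2\sqrt{\pi}\,\Gamma(\tfrac{d+1}{2})/\Gamma(\tfrac{d}{2}) \ge 1$ for every $d\ge 1$ (indeed $\beta_2 = \pi$, $\beta_3 = 4$, and $\beta_d$ is increasing in $d$), we have $\tfrac{1}{\beta_d} \le 1$, and the claimed bound $\P(X(\0)\le u < X(t)) \le C_{d-1}^*(u)\|t\|_2$ follows a fortiori. In fact this argument yields the sharper constant $1/\beta_d$, but the stated form suffices for later use.

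There is essentially no serious obstacle here: the only point requiring a little care is that Theorem~\ref{prp:Pcrossing_Esurfacearea} is quoted for a fixed direction $\mathbf{w}$, and one must note that its first part holds for \emph{every} $\mathbf{w}\in\partial B_1^d$ (the proof of~\eqref{eqn:first_item_thm2.1} uses only isotropy and~\ref{A0}, with $\mathbf{w}$ arbitrary), so applying it with $\mathbf{w} = t/\|t\|_2$ is legitimate. One should also handle the trivial case $t = \0$ separately, where both sides vanish. No additional regularity assumptions (\ref{A1}, \ref{A2}) are needed, since only the first‑order limit~\eqref{eqn:first_item_thm2.1}, not the rate~\eqref{eqn:second_item_thm2.1}, is invoked. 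Alternatively, one can avoid the limit entirely by invoking~\eqref{eqn:monotonicity_in_q} directly: it states $\P(X(\0)\le u < X(q\mathbf{w})) \le \tfrac{1}{\beta_d}C_{d-1}^*(u)\,q$ for \emph{all} $q>0$, which with $q = \|t\|_2$ and $\mathbf{w} = t/\|t\|_2$ is already the (sharper) conclusion; this is the cleanest route.
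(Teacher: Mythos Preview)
Your argument is clean but it proves a different statement than the one claimed. The lemma is stated for a random field satisfying only~\ref{A0}, which gives stationarity but \emph{not} isotropy; indeed, the sentence introducing the lemma in the paper stresses that it holds ``for general, possibly anisotropic, random fields.'' Both routes you propose---the telescoping plus passage to the limit via~\eqref{eqn:first_item_thm2.1}, and the direct invocation of~\eqref{eqn:monotonicity_in_q}---rely on Theorem~\ref{prp:Pcrossing_Esurfacearea}, whose hypotheses include isotropy. So you have established the (sharper) bound $\P(X(\0)\le u<X(t))\le \tfrac{1}{\beta_d}C_{d-1}^*(u)\|t\|_2$ under the extra assumption that $X$ is isotropic, but not the lemma as stated. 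This matters downstream: the lemma is invoked, for instance, in Proposition~\ref{prop:AireRDimensionD}, whose hypotheses do not include isotropy.

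The paper's proof avoids isotropy entirely by a direct geometric argument. It bounds $\P(X(\0)\le u<X(t))$ by the probability that the segment $\bar t$ meets $L_X(u)$, rewrites this as $\P(\0\in K_X(u))$ for the ``swept'' set $K_X(u)=\{L_X(u)-\lambda t:\lambda\in(0,1)\}$, and then uses stationarity alone to identify $\P(\0\in K_X(u))$ with the expected volume fraction $\E[\sigma_d(K_X(u)\cap B_r^d)]/\sigma_d(B_r^d)$. The volume of the swept set is then controlled by $\|t\|_2\,\sigma_{d-1}(L_X(u)\cap B_{r^+}^d)$ via a Cavalieri-type inequality, and sending $r\to\infty$ gives $C_{d-1}^*(u)\|t\|_2$. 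This trades your sharper constant $1/\beta_d$ for a proof that needs only stationarity; your approach would be a genuine improvement if isotropy were available, but it is not here.
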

\begin{proof}[Proof of Lemma~\ref{lem:crossing_bound}]
Let $\bar{t}$ denote the line segment in $\R^d$ whose endpoints are $\0$ and $t$. Define $K_X(u) := \{L_X(u) - \lambda t : \lambda\in(0,1)\}$ to be the set of points that are at a distance of at most $||t||_2$ from $L_X(u)$ in the direction of $-t$ (see Figure~\ref{fig:kxu} for an illustration  of the set $K_X(u)$ in  $\R^2$). By the assumption of stationarity it holds that
\begin{align*}
\P\left(X(\0) \leq u < X(t)\right )&\leq \P\left( \bar{t} \cap L_X(u) \neq \emptyset\right )= \P\left( \0 \in K_X(u)\right)
 = \E\left[\frac{\sigma_d\left(K_X(u)\cap B_r^d\right)}{\sigma_d(B_r^d)}\right],
\end{align*}
for all $r\in\R^+$. If we denote $r^+ := r + ||t||_2$ for $r\in\R^+$, then
\begin{align*}
\sigma_d\left(K_X(u)\cap B_r^d\right) &\leq \int_{L_X(u) \cap B_{r^+}^d}\big\langle t,\, \frac{\nabla X(s)}{||\nabla X(s)||_2}\big\rangle\,\sigma_{d-1}(\rmd s)\leq \int_{L_X(u) \cap B_{r^+}^d}||t||_2\,\sigma_{d-1}(\rmd s)\\
&=\|t\|_2 \sigma_{d-1}\left(L_X(u)\cap B_{r^+}^d\right), \quad a.s.
\end{align*}
Taking the expectation and sending $r\to\infty$ yields the result, since $\sigma_d(B_r^d)/ \sigma_d(B_{r^+}^d) \to 1$.
\end{proof}

\begin{figure}[H]
  \centering
\includegraphics[width=7.5cm]{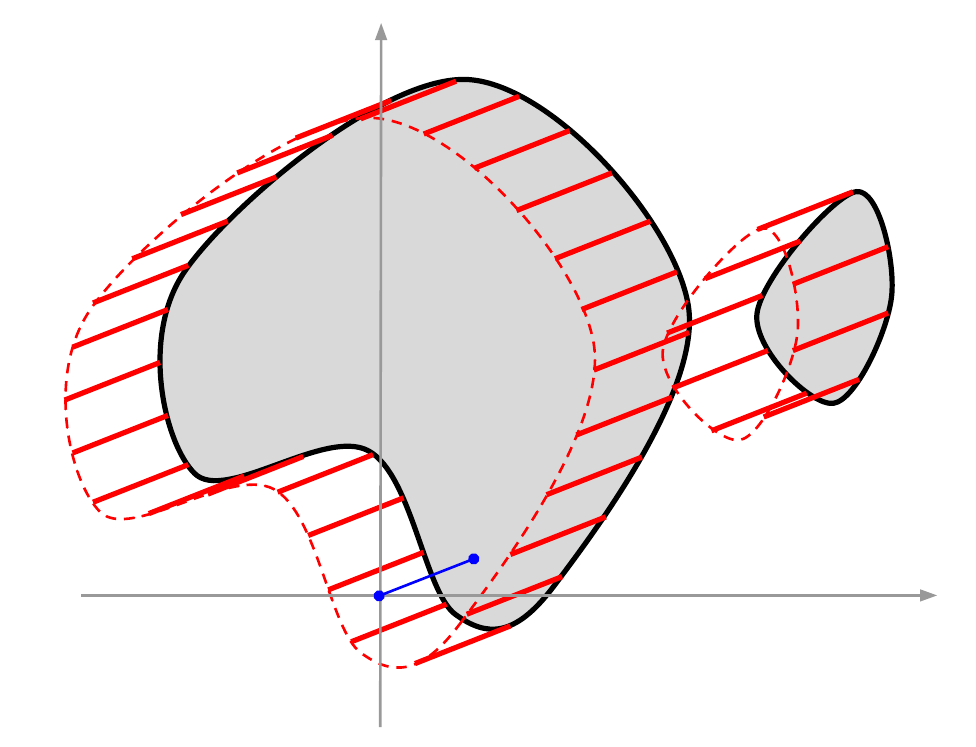}
    \put(-116,42){$\bar{t}$}
\caption{In dimension 2, for the level curve $L_{X}(u)$ in black,  $K_X(u)$ is the area  covered by solid red line segments. Note that the line segment $\bar{t}$ (in blue) crosses $L_X(u)$ (in black) if and only if $\0\in K_X(u)$.\label{fig:kxu}}
\end{figure}

\begin{Definition}\label{def:Pij}
Define the map $\pi_j:\R^d\rightarrow\R^d$ to be the orthogonal projection onto the $(d-1)$-dimensional subspace $\vect(\mathbf{e}_j^\perp)$. That is, for $S\subseteq \R^d$,
$$\pi_j(S) = \Big\{\sum_{i\ne j}s_{i}\be_{i}:\mathbf{s}\in S\Big\} = \{\mathbf{v}\in\vect(\mathbf{e}_j^\perp): l_{\mathbf{e}_j,\mathbf{v}}\cap S \neq \emptyset\},$$
with $l_{\mathbf{e}_j,\mathbf{v}}$ as  in~\eqref{eqlines}.
\end{Definition}

The following lemma allows to obtain the rate of convergence of a Riemann sum used in Proposition~\ref{prp:Phat_L2_P1}. The techniques used in the proof can be used to bound the probability that a random manifold intersects a small region of space.

\begin{Lemma}\label{Lemma2}
 Let $X$ be  a  stationary  random field on $\R^d$ satisfying Assumption~\ref{A0} and~\ref{A3}.
Fix $j\in\{1,\ldots,d\}$.
For $\mathbf{v}\in\vect(\be_j^\perp)$, denote $n_\mathbf{v} := \sigma_0(L_X(u) \cap l_{\be_j,\mathbf{v}} \cap [0,1]^d)$, with $l_{\be_j,\mathbf{v}}$ as  in~\eqref{eqlines}.
 Then, there exists a constant $K\in\R^+$ such that for all $\delta\in[0,1)$,
$$\P\bigg( \sup_{\mathbf{v}_1, \mathbf{v}_2 \in \pi_j([0,\delta]^d)} \big|n_{\mathbf{v}_1} - n_{\mathbf{v}_2}\big| > 0\bigg) \leq K \delta.$$
\end{Lemma}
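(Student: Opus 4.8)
The plan is to bound the probability that the number of level-$u$ crossings along the line $l_{\be_j,\mathbf{v}}$ (restricted to the unit cube) is not constant as $\mathbf{v}$ ranges over the small projected cube $\pi_j([0,\delta]^d)$. The key observation is that $n_{\mathbf v}$ changes with $\mathbf v$ only if the level set $L_X(u)$ behaves irregularly near the thin slab $\pi_j([0,\delta]^d)\times\R$ intersected with $[0,1]^d$: either $L_X(u)$ is tangent to one of the lines $l_{\be_j,\mathbf v}$ inside that slab (so the partial derivative $\partial_j X$ vanishes somewhere on $L_X(u)$ in the slab), or a branch of $L_X(u)$ enters or exits the cube $[0,1]^d$ through the relevant piece of its boundary. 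So the first step is to write
$$\Big\{\sup_{\mathbf v_1,\mathbf v_2\in\pi_j([0,\delta]^d)}|n_{\mathbf v_1}-n_{\mathbf v_2}|>0\Big\}\subseteq A_1\cup A_2,$$
where $A_1$ is the event that there exists $t\in S_\delta:=\big([0,\delta]^d$-slab$\big)\cap[0,1]^d$ (more precisely $t$ in the slab $\pi_j^{-1}(\pi_j([0,\delta]^d))\cap[0,1]^d$) with $X(t)=u$ and $\partial_j X(t)=0$, and $A_2$ is the event that $L_X(u)$ meets the portion of $\partial([0,1]^d)$ lying over $\pi_j([0,\delta]^d)$ in the non-$\be_j$ faces. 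A short topological argument (an intermediate-value / continuity argument on $t\mapsto X(\mathbf v+t\be_j)$, using that $X$ is $C^2$ and has no critical points at level $u$) justifies this inclusion: if neither $A_1$ nor $A_2$ occurs, each crossing point depends continuously on $\mathbf v$ and none can be created or destroyed, so $n_{\mathbf v}$ is locally constant, hence constant on the connected set $\pi_j([0,\delta]^d)$.

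Next I would bound $\PP(A_1)$. This is the "random manifold meets a small region" estimate. Consider $f(t)=(X(t),\partial_j X(t))$ as in Assumption~\ref{A3}. The event $A_1$ asks that $f$ hits the line $\{u\}\times\{0\}$ somewhere in the slab $S_\delta$, which has $\sigma_d$-measure of order $\delta$. I would cover $S_\delta$ by $O(\delta^{1-d})$ cubes of side $\delta$ (in fact $S_\delta$ is already a union of $O(\delta^{1-d})$ such cubes), and on each such cube $Q$ bound $\PP(\exists t\in Q: f(t)=(u,0))$ using a first-order Taylor expansion: if $f(t)=(u,0)$ and $t_0$ is the corner of $Q$, then $|X(t_0)-u|\le \sqrt d\,\delta\, W$ and $|\partial_j X(t_0)|\le\sqrt d\,\delta\, W$, where $W$ is the Lipschitz-type bound from Assumption~\ref{A3}. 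Conditioning on $f(t_0)=s$ and using that the density of $f(\0)$ is bounded on a neighbourhood $U$ of $(u,0)$ while $\E[W^2\mid f(\0)=s]<\infty$ uniformly on $U$, one gets $\PP(\exists t\in Q: f(t)=(u,0))\le C\delta^{2}\E[W^2\mid\cdot]\cdot\sup_U(\text{density})$ — that is, $O(\delta^2)$ per cube (the square coming from the two conditions on $X(t_0)-u$ and $\partial_j X(t_0)$, each contributing a factor $\delta$ times a bounded density, after absorbing the moment of $W$). Summing over the $O(\delta^{1-d})\times O(\delta^{d-1})=O(1)$... — here I must be careful: the slab $S_\delta$ over $\pi_j([0,\delta]^d)$ consists of $O(\delta^{-(d-1)})$ cubes of side $\delta$ only if we also cut along $\be_j$; there are $O(1/\delta)$ of them along $\be_j$ and the transverse extent is already $\delta$, so in total $O(1/\delta)$ cubes, each of probability $O(\delta^2)$, giving $\PP(A_1)=O(\delta)$. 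This is exactly the bound sought. The term $\PP(A_2)$ is handled the same way, or more simply by Lemma~\ref{lem:crossing_bound}: it is dominated by the probability that $L_X(u)$ meets a subset of $\partial([0,1]^d)$ of $(d-1)$-measure $O(\delta)$, which is $O(\delta)$ by a Markov/Kac-Rice bound on $\E[\sigma_{d-1}(L_X(u)\cap(\text{small boundary piece}))]$, itself $O(\delta)$ by stationarity of $X$ and the finiteness of the surface-area density guaranteed by Assumption~\ref{A0}.

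Combining, $\PP(A_1\cup A_2)\le \PP(A_1)+\PP(A_2)\le K\delta$ for a constant $K$ independent of $\delta\in[0,1)$, which is the claim. The main obstacle is the rigorous proof of the topological inclusion in the first step — making precise that, absent a tangency in the slab and absent a boundary crossing, the crossing count is genuinely invariant; this requires the implicit function theorem applied to $X$ near each transverse crossing (legitimate since $\nabla X\ne 0$ on $L_X(u)$ a.s. by Assumption~\ref{A0}) plus a compactness argument to get a uniform neighbourhood, and some care that crossings cannot "escape to $\mathbf v$-infinity" within the compact set $\pi_j([0,\delta]^d)$. The probabilistic estimates themselves are routine given Assumption~\ref{A3}: they are the same Taylor-expansion-plus-conditioning device already used in the proof of Theorem~\ref{prp:Pcrossing_Esurfacearea} and Lemma~\ref{lem:1D_small_sojourn}.
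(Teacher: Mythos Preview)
Your decomposition and strategy are essentially the paper's: your $A_1$ is the paper's $H_1=f^{-1}((u,0))\cap[0,1]^d$ with $f=(X,\partial_jX)$, your $A_2$ is the paper's $H_2\cup H_3=\{s\in L_X(u):s_j\in\{0,1\}\}$, and your covering of the slab by $O(1/\delta)$ cubes of side $\delta$, each carrying probability $O(\delta^2)$ via Taylor and the bounded joint density of $f(\0)$ together with $\E[W^2\mid f(\0)=s]<\infty$, is exactly the paper's union bound $\P(\pi_j([0,\delta]^d)\cap\pi_j(H_1)\neq\emptyset)\le\lceil1/\delta\rceil\,\P([0,\delta]^d\cap H_1\neq\emptyset)$.

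Two small corrections. First, the relevant boundary faces are the ones \emph{perpendicular} to $\be_j$ (i.e.\ $\{s_j=0\}$ and $\{s_j=1\}$), not the ``non-$\be_j$'' faces; these are where the line segments $l_{\be_j,\mathbf v}\cap[0,1]^d$ begin and end. Second, your $A_2$ bound as written does not go through: the boundary piece has $\sigma_{d-1}$-measure $\delta^{d-1}$, not $O(\delta)$, and $\sigma_{d-1}(L_X(u)\cap(\text{piece}))$ vanishes generically since the intersection is $(d-2)$-dimensional, so the Markov/Kac--Rice route you sketch does not yield $O(\delta)$. The paper instead reuses the same Taylor device as for $A_1$, now with only one scalar constraint: if $L_X(u)$ meets the $(d-1)$-cube $\pi_j([0,\delta]^d)\subset\{s_j=0\}$ then $|X(\0)-u|\le\sqrt d\,\delta\sup_{[0,1]^d}\|\nabla X\|_2$, and the second half of Assumption~\ref{A3} (bounded marginal density of $X(\0)$ plus $\E[\sup\|\nabla X\|_2\mid X(\0)=u]<\infty$) gives $O(\delta)$ directly.
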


\begin{proof}[Proof of Lemma~\ref{Lemma2}]
Note that $n_\mathbf{v}$ seen as a function that maps $\mathbf{v}\in\vect(\be_j^\perp)$ into $\N_0$ is almost surely piecewise constant on $\pi_j([0,1]^d)$. The discontinuities occur at points in $\pi_j(H_1\cup H_2 \cup H_3)$, where $H_1 := f^{-1}((u,0))\cap [0,1]^d$---recall from Assumption~\ref{A3} that $f = (X,\partial_j X)$, $H_2 :=\{\mathbf{s}\in L_X(u):s_j = 0\}$, and $H_3 :=\{\mathbf{s}\in L_X(u):s_j = 1\}$ (see Figure~\ref{fig:pw_constant} for an example with $d=2$).

\begin{figure}[H]
  \centering
\includegraphics[width=9cm]{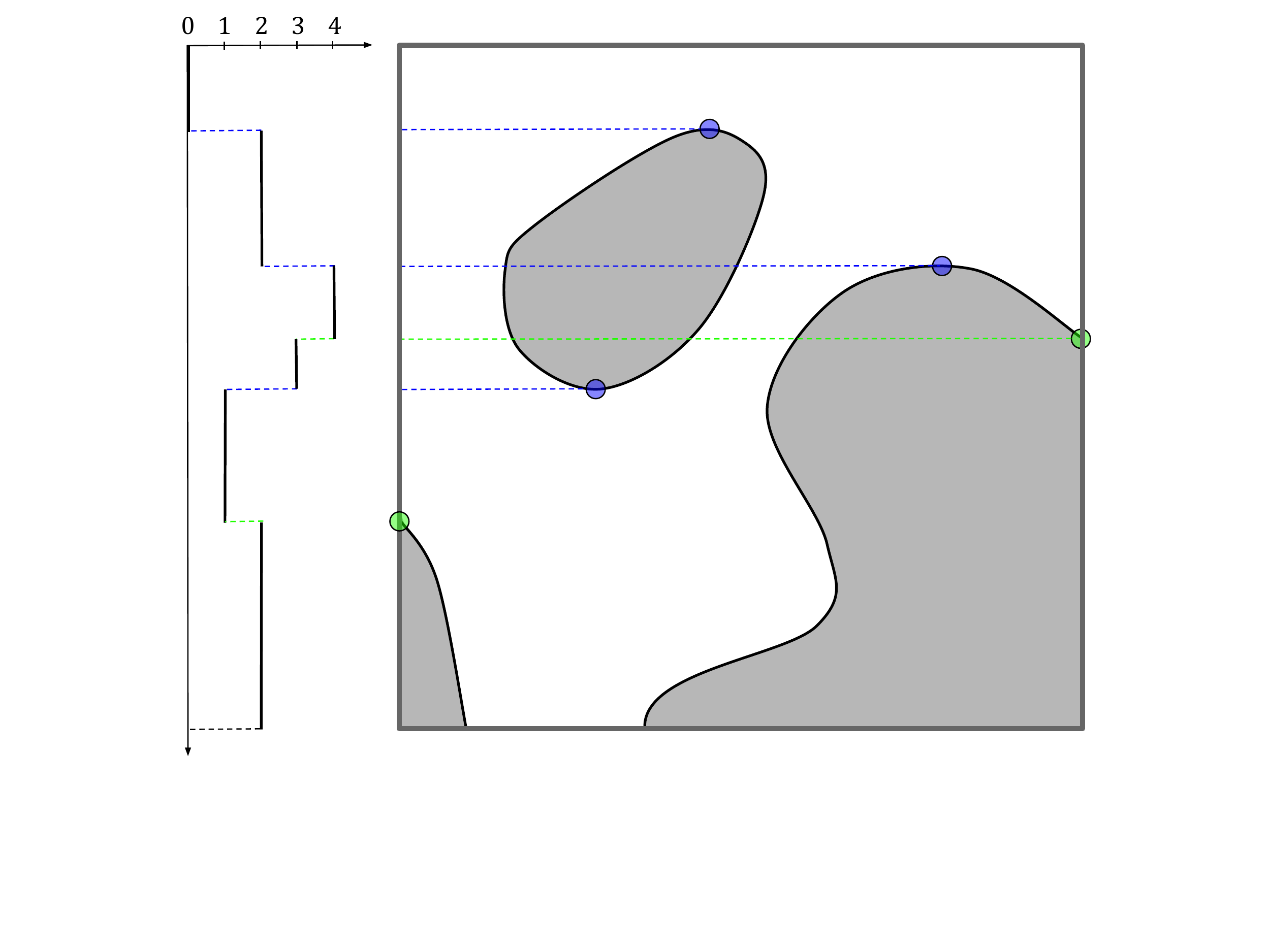}
\vspace{-1.2cm}
\caption{On the left a representation of the piecewise constant function $n_{\mathbf{v}}$ of $\mathbf{v}\in\vect(\be_1^\perp)$ computed for the excursion set represented in grey on the right. The points in blue are in $H_1$ and the points in green are in $H_2$ and $H_3$.\label{fig:pw_constant}}
\end{figure}

For $i\in\{1,2,3\}$, we aim to show that $\P\big(\pi_j([0,\delta]^d)\cap\pi_j(H_i)\neq \emptyset\big)$ is of the order of $\delta$, which implies the desired result.

We begin with the case of $H_1$. By stationarity, we apply the union bound,
\begin{equation}\label{eqn:stationarity_argument_H1}
\P\big(\pi_j([0,\delta]^d)\cap\pi_j(H_1)\neq\emptyset\big) \leq \lceil 1/\delta\rceil \P\big([0,\delta]^d\cap H_1\neq\emptyset\big),
\end{equation}
where $\lceil\cdot\rceil$ denotes the ceiling function.\\

By Taylor's theorem, for each $t\in[0,\delta]^d$, there exists an $s\in[0,\delta]^d$ such that
$f(t) = f(\0) + \langle t, \nabla f(s)\rangle.$
If $[0,\delta]^d \cap H_1\neq \emptyset$, then
$(u,0) - f(\0) = \langle t, \nabla f(s)\rangle,$
for some $s, t\in[0,\delta]^d$. \\

Let us denote by $M_1\in\R^+$ the constant which  uniformly  bounds the density of $f(\0)$ on $U$, and $M_2$ the constant which  bounds  the marginal density of $X(\0)$ on $I$ (see Assumption \ref{A3}).

For $\delta \leq 1$, we bound the variation of $f(\0)$ by writing $$||(u,0) - f(\0)||_\infty \leq ||t||_1 W \leq d\delta W.$$
Therefore, we obtain
\begin{align*}
\P\big([0,\delta]^d\cap H_1\neq\emptyset\big) \leq \P\big(||(u,0) - f(\0)||_\infty \leq d\delta W\big).
\end{align*}
let $g(w,x,\dot{x})$ be the joint probability distribution of $W$, $X(\0)$, and $\partial_j X(\0)$. Note that
\begin{align*}
\P\big(||(u,0) - f(\0)||_\infty \leq d\delta W\big) &= \int_{0}^\infty \int_{u-\delta wd}^{u+\delta wd}\int_{-\delta wd}^{\delta wd} g(w,x,\dot{x})\, \rmd\dot{x}\rmd x\rmd w\\
&= (2d\delta)^2 \int_{0}^\infty \int_{-w/2}^{w/2}\int_{-w/2}^{w/2} g(w,u +2d\delta z, 2d\delta \dot{z})\, \rmd\dot{z}\rmd z\rmd w,
\end{align*}
where a change of the variables $x =u + 2d\delta z$ and $\dot{x} = 2d\delta \dot{z}$ was used. Now, by the dominated convergence theorem,
\begin{align*}
\lim_{\delta\to 0} \frac{1}{\delta^2}\P\big(||(u,0) - f(\0)||_\infty \leq d\delta W\big) &= \lim_{\delta\to 0} 4d^2 \int_{0}^\infty \int_{-w/2}^{w/2}\int_{-w/2}^{w/2} g(w,u + 2d\delta z, 2d\delta \dot{z})\, \rmd\dot{z}\rmd z\rmd w\\
& = 4d^2 \int_{0}^\infty w^2 g(w,u,0)\, \rmd w \leq 4d^2 M_1\E[W^2\ |\ f(\0) = (u,0)],
\end{align*}
which is finite by~\ref{A3}. Thus,
$\P\big([0,\delta]^d\cap H_1\neq\emptyset\big) = O(\delta^2),$
and by~\eqref{eqn:stationarity_argument_H1},
$\P\big(\pi_j([0,\delta]^d)\cap\pi_j(H_1)\neq\emptyset\big) = O(\delta),$
as desired.

Now, we consider the case of $H_2$ (the case of $H_3$ being identical). Using similar arguments as in the case of $H_1$, we readily obtain
\begin{align*}
\lim_{\delta\to 0} \frac{1}{\delta}\P\big(\pi_j([0,\delta]^d)\cap\pi_j(H_2)\neq\emptyset\big) &= \lim_{\delta\to 0} \frac{1}{\delta}\P\big([0,\delta]^d\cap H_2\neq\emptyset\big)\\
&\leq \lim_{\delta\to 0} \frac{1}{\delta}\P\Big(|u-X(\0)| \leq \sqrt{d} \delta \sup_{t\in[0,1]^d}||\nabla X(t)||_2\Big)\\ 
&\leq 2\sqrt{d} M_2 \E\Big[\sup_{t\in[0,1]^d}||\nabla X(t)||_2\ \big|\ X(\0) = u\Big] < \infty.
\end{align*}
Thus, $\P\big(\pi_j([0,\delta]^d)\cap\pi_j(H_2)\neq\emptyset\big) = O(\delta)$ as desired. 
\end{proof}

 \subsection{Proof of Proposition~\ref{LeadbetterTheorem}}
First note that $$\left\{X(\0)\leq u<X({q}\be_{1})\right\}=\left\{X(\0)\leq u<X(\0)+qX_q\right\}=\left\{X(\0)\leq u\right\}\cap\left\{\frac{1}{q}(u-X(\0))<X_{q}\right\}.$$
It follows that
\begin{align*}
\frac{1}{q}\PP\left(X(\0)\leq u<X(q)\right)&=\frac{1}{q}\PP\left(X(\0)\leq u,\ X_{q}>q^{-1}(u-X(\0))\right)\\
&=\frac1{q}\int_{-\infty}^{u}\int_{\frac1{q}(u-x)}^{\infty}g_{q}(x,y)\rmd y\rmd x=\frac1{q}\int_{0}^{\infty}\int_{u-qy}^{u}g_{q}(x,y)\rmd x\rmd y\\
&=\int_{0}^{\infty}z\int_{0}^{1}g_{q}(u-qzv,z)\rmd v\rmd z,
\end{align*} using the change of variable $v=\frac{u-x}{qy},\ z=y $ ($x=u-qzv$, $y=z$).  From the assumptions, we derive that $g_{q}(u-qzv,z)\to p(u,z)$ pointwise as $q\to0$, and the dominated {convergence} theorem applies and leads to the result.

 \section{Additional results and proofs associated to Section~\ref{sec:TCL}\label{prf:secTCL}}

\subsection{Proof of Proposition~\ref{prp:Phat_L2_P1}}\label{proofProp31}

Firstly, we prove the first item in Proposition~\ref{prp:Phat_L2_P1}.  As $L^T_X(u)$ is a $C^1$ smooth manifold, integrals over $L^T_X(u)$ can be expressed via sequences of polygonal approximations. Let $(A_1, \ldots, A_k)$ be a finite sequence of $(d-1)$-dimensional hyperplanes residing in $\R^d$. For $i\in\{1,\ldots,k\}$, let $\mathbf{n}_i\in\R^d$ denote the unit normal vector to $A_i$. We aim to show
\begin{equation}\label{eqn:c_tilde_riemann_sum}
\sum_{i=1}^k\sum_{j=1}^d |\langle\mathbf{n}_i, \be_j\rangle|\sigma_{d-1}(A_i) = \sum_{j=1}^d \int_{\vect(\be_j^\perp)} \sigma_0\bigg(\Big(\bigcup_{i=1}^k A_i\Big) \cap l_{\be_j, \mathbf{v}}\bigg)\ \rmd \mathbf{v},
\end{equation}
with $l_{\be_j,\mathbf{v}}$ as  in~\eqref{eqlines}. By noticing that for all $\mathbf{v}\in\vect(\be_j^\perp)$,
$$\sigma_0\bigg(\Big(\bigcup_{i=1}^k A_i\Big) \cap l_{\be_j, \mathbf{v}}\bigg) = \sum_{i=1}^k\I{A_i\cap l_{\be_j, \mathbf{v}}\neq \emptyset},$$
we see that it suffices to show that for each pair of indices $(i,j)\in \{1,\ldots,k\}\times \{1,\ldots,d\}$,
\begin{equation}\label{eqn:projected_area}
|\langle\mathbf{n}_i, \be_j\rangle|\sigma_{d-1}(A_i) = \int_{\vect(\be_j^\perp)} \I{A_i\cap l_{\be_j, \mathbf{v}}\neq \emptyset}\ \rmd \mathbf{v}.
\end{equation}
Equation~\eqref{eqn:projected_area} is evident, since both sides of the equality describe the $\sigma_{d-1}$ measure of the projection of $A_i$ onto the hyperplane $\vect(\be_j^{\perp})$. Note that $\sum_{j=1}^d |\langle\mathbf{n}_i, \be_j\rangle| = ||\textbf{n}_i||_1$, and that for $t\in L_X(u)$, $\nabla X(t)/||\nabla X(t)||_2$ is the unit normal vector to $L_X(u)$ at $t$, so the  left-hand side of Equation~\eqref{eqn:c_tilde_riemann_sum} is a Riemann sum that approximates the integral expression for $\widetilde{C}^T_{d-1}(u)$ in~\eqref{def:Pp} when $k$ goes to infinity.

Now, we prove the second item in Proposition~\ref{prp:Phat_L2_P1}.
For $\mathbf{g}\in \mathbb{G}(\delta, T)$ in~\eqref{G}, let $Z_{\mathbf{g}} :=\mathbbm{1}_{\{X(\mathbf{g})\ge u\}}$.
Equations~\eqref{eq:C2pixel}-\eqref{eq:C1tildeNEW}  can be rewritten as
\begin{align}
\widehat{C}_d^{(\delta, T)}(u)&=\frac{\delta^{d}}{\sigma_d(T)}\sum_{\mathbf{g}\in \mathbb{G}(\delta, T)}Z_{\mathbf{g}}=\frac{1}{\left(2N\right)^{d}}\sum_{\mathbf{g}\in \mathbb{G}(\delta, T)}Z_{{\mathbf{g}}},\nonumber 
\\
\widehat{C}_{d-1}^{(\delta, T)}(u)& = \frac{ \delta^{d-1}}{\sigma_d(T)} \sum_{j=1}^d\underset{\mathbf{g}+\delta\be_j\in\mathbb{G}(\delta,T)}{\sum_{\mathbf{g}\in\mathbb{G}(\delta,T)}} \mathbbm{1}_{\{Z_{\mathbf{g}} \neq Z_{\mathbf{g}+\delta{\be_j}}\}}.\nonumber 
 \end{align}
Set $G:=\mathbb G(\delta, T) \cap [0,1]^d$ and $n_\mathbf{v} := \sigma_0(L_X(u) \cap l_{\be_j,\mathbf{v}} \cap [0,1]^d)$, suppose that we  have established
\begin{equation}\label{eqn:to_show_CLT}
\E\Big[\Big|\delta^{d-1}\sum_{\mathbf{g}\in G}
\mathbbm{1}_{\{Z_{\mathbf{g}}\neq  Z_{\mathbf{g}+\delta{\be_{j}}}\}} - \int_{\vect{(\be_{j}^\perp})\cap[0,1]^{d}}n_\mathbf{v} \rmd \mathbf{v} \Big|\Big] = O\big(\delta^{1-\eps}\big),
\end{equation}
for all $j\in\{1,\ldots,d\}$.
Then, by stationarity and the triangle inequality,
$$\E\Big[\Big|\delta^{d-1}\sum_{\mathbf{g}\in\mathbb G(\delta, T)} \mathbbm{1}_{\{Z_{\mathbf{g}}\neq  Z_{\mathbf{g}+\delta{\be_{j}}}\}} - \int_{\vect(\be_j^\perp)}\sigma_0\big(L_X^T(u) \cap l_{\be_j,\mathbf{v}}\big) \rmd \mathbf{v} \Big|\Big] = O\Big((N\delta)^{d}\delta^{{1-\eps}}\Big)$$
 so the desired result holds by summing over the $d$ dimensions and again applying the triangle inequality together with the first item. Therefore, it suffices to show~\eqref{eqn:to_show_CLT}.
This is done in two steps. First, we show that (recall  Definition~\ref{def:Pij})
\begin{equation}\label{eq:step1_CLT}
\E\Big[\Big|\delta^{d-1}\sum_{\mathbf{g}\in G} \mathbbm{1}_{\{Z_{\mathbf{g}}\neq  Z_{\mathbf{g}+\delta{\be_{j}}}\}} - \delta^{d-1}\sum_{\mathbf{v}\in\pi_j(G)}n_{\mathbf{v}}\Big|\Big] = O(\delta^{1-\eps}).
\end{equation}
Second, we show that
\begin{equation}\label{eq:step2_CLT}
\E\Big[\Big|\delta^{d-1}\sum_{\mathbf{v}\in\pi_j(G)}n_{\mathbf{v}} - \int_{\vect(\be_j^\perp)\cap[0,1]^{d}}n_{\mathbf{s}} \rmd \mathbf{s} \Big|\Big] = O(\delta^{1-\eps}).
\end{equation}

Let $M^{(\delta)} := \big(\sigma_0(G)\big)^{1/d}=\lfloor \delta^{-1}\rfloor+1$ be the number of rows in the grid $G$ that is contained in $[0,1]^d$. By construction, $M^{(\delta)}\delta\le 1+\delta$ for all $\delta\in\R^+$.

To see that~\eqref{eq:step1_CLT} holds, we use the triangle inequality to write
\begin{align}\label{eq:sum_expectations_CLT}
\E\Big[\Big|\delta^{d-1}&\sum_{\mathbf{g}\in G} \mathbbm{1}_{\{Z_{\mathbf{g}}\neq  Z_{\mathbf{g}+\delta{\be_{j}}}\}} - \delta^{d-1}\sum_{\mathbf{v}\in\pi_j(G)}n_{\mathbf{v}}\Big|\Big] \le \delta^{d-1}\sum_{\mathbf{v}\in\pi_j(G)}\E\Big[\Big|\sum_{\mathbf{g}\in G\cap l_{\be_j,\mathbf{v}}} \mathbbm{1}_{\{Z_{\mathbf{g}}\neq  Z_{\mathbf{g}+\delta{\be_{j}}}\}} - n_{\mathbf{v}}\Big|\Big].
\end{align}
Moreover, for fixed $\mathbf{v}\in \pi_j(G)$, $\sum_{\mathbf{g}\in G\cap l_{\be_j,\mathbf{v}}} \mathbbm{1}_{\{Z_{\mathbf{g}}\neq  Z_{\mathbf{g}+\delta{\be_{j}}}\}}$ approaches $n_{\mathbf{v}}$ from below, and both quantities take values in $\N_0$. If the two quantities are distinct, then there must be two elements of $L_X(u) \cap [0,1]^d \cap l_{\be_j,\mathbf{v}}$ with a spacing of less than $\delta$. Let $A_1^{(\mathbf{v})}$ denote the event
$\big\{\sum_{\mathbf{g}\in G\cap l_{\be_j,\mathbf{v}}} \mathbbm{1}_{\{Z_{\mathbf{g}}\neq  Z_{\mathbf{g}+\delta{\be_{j}}}\}} \neq n_{\mathbf{v}}\big\}$.
With $(\cdot)_j$ denoting the projection onto the $j^{\mathrm{th}}$ component, note that
$$A_1^{(\mathbf{v})} \subset \bigcup_{i=0}^{M^{(\delta)}-1}\bigg\{\mbox{Card}\Big([i\delta,(i+2)\delta]\cap \big(L_X(u) \cap l_{\be_j,\mathbf{v}}\big)_j\Big)\geq 2\bigg\},$$
since any two points in $\R$ with a spacing of less than $\delta$ will be contained in an interval $[i\delta,(i+2)\delta]$ for some $i\in\Z$. It follows from Assumption~\ref{A1}, Lemma~\ref{lem:1D_small_sojourn} with $p=2$, and the definition of $M^{(\delta)}$ that $\PP(A_1^{(\mathbf{v})})=O(\delta).$
This fact, with H\"older's inequality yields for the $\eps$ defined in~\ref{A2},
\begin{align*}
\E\Big[\Big|&\sum_{\mathbf{g}\in G\cap l_{\be_j,\mathbf{v}}} \I{Z_{\mathbf{g}}\neq Z_{\mathbf{g}+\delta\be_j}} - n_{\mathbf{v}}\Big|\Big] =
\E\Big[\Big|\sum_{\mathbf{g}\in G\cap l_{\be_j,\mathbf{v}}} \I{Z_{\mathbf{g}}\neq Z_{\mathbf{g}+\delta\be_j}} - n_{\mathbf{v}}\Big|\mathbbm{1}_{A_1^{(\mathbf{v})}}\Big]\\
&\leq \E[\ n_{\mathbf{v}}\ \mathbbm{1}_{A_1^{(\mathbf{v})}}\ ] \leq \big(\E[n_\mathbf{v}^{1/\eps}]\big)^{\eps}\ \P(A_1^{(\mathbf{v})})^{1-\eps} = O(\delta^{1-\eps}).
\end{align*}
  By the stationarity of $X$, each of the  $(M^{(\delta)})^{d-1}$  terms in~\eqref{eq:sum_expectations_CLT} is identical, and as  $M^{(\delta)}\delta\le 1+\delta$,~\eqref{eq:step1_CLT} follows immediately.  We continue by showing~\eqref{eq:step2_CLT}. For $\mathbf{v}\in\vect(\be_j^{\perp})$, define the event
$A_2^{(\mathbf{v})} := \Big\{ \sup_{\mathbf{s}\in\mathbf{v}+\pi_j([0,\delta]^d)}
|n_\mathbf{v}-n_\mathbf{s}| > 0 \Big\}.$
Then, the triangle inequality    gives
\begin{align*}
\E&\Big[\Big|\delta^{d-1}
 \sum_{\mathbf{v}\in\pi_j(G)}
n_{\mathbf{v}} - \int_{\vect(\be_j^\perp)\cap[0,1]^{d}}n_{\mathbf{s}} \rmd \mathbf{s} \Big|\Big]
\leq \delta^{d-1} \sum_{\mathbf{v}\in\pi_j(G)}
\E\Big[\Big|n_\mathbf{v} - \delta^{1-d}\int_{\mathbf{v}+\pi_j([0,\delta]^d)} n_\mathbf{s}\ \rmd \mathbf{s}\Big|\Big]\\
& = \delta^{d-1} \sum_{\mathbf{v}\in\pi_j(G)}
\E\Big[\Big|n_\mathbf{v} - \delta^{1-d}\int_{\mathbf{v}+\pi_j([0,\delta]^d)} n_\mathbf{s}\ \rmd \mathbf{s}\Big| \mathbbm{1}_{A_2^{(\mathbf{v})}}\Big]\\
&\leq \delta^{d-1} \sum_{\mathbf{v}\in\pi_j(G)}
\Big(\E\Big[\Big|n_\mathbf{v} - \delta^{1-d}\int_{\mathbf{v}+\pi_j([0,\delta]^d)} n_\mathbf{s}\ \rmd \mathbf{s}\Big|^{\frac 1 \eps}\Big]\Big)^{ \eps }\P(A_2^{(\mathbf{v})})^{1-\eps }\\
&\leq \delta^{d-1} \sum_{\mathbf{v}\in\pi_j(G)} \bigg(\big(\E\big[ n_\0^{\frac 1 \eps}\big]\big)^{ \eps } + \Big(\E\Big[\Big|\delta^{1-d}\int_{\mathbf{v}+\pi_j([0,\delta]^d)} n_\mathbf{s}\ \rmd \mathbf{s}\Big|^{\frac 1 \eps}\Big]\Big)^{ \eps }\bigg)\P(A_2^{(\mathbf{v})})^{{1-\eps} }\\
& \leq \delta^{d-1} \sum_{\mathbf{v}\in\pi_j(G)} \bigg(\big(\E\big[n_\0^{\frac 1 \eps}\big]\big)^{ \eps } + \Big(\delta^{1-d}\int_{\mathbf{v}+\pi_j([0,\delta]^d)} \E\big[n_\mathbf{s}^{\frac 1 \eps}\big]\ \rmd \mathbf{s}\Big)^{\eps }\bigg)\P(A_2^{(\mathbf{v})})^{{1-\eps} }\\
&\leq \delta^{d-1} \sum_{\mathbf{v}\in\pi_j(G)} 2\big(\E\big[n_\0^{\frac 1 \eps}\big]\big)^{ \eps }\P(A_2^{(\mathbf{v})})^{{1-\eps} },
\end{align*}
where we used Jensen inequality for the penultimate inequality and the fact that $\sigma_{d-1}(\pi_{j}([0,\delta]^{d}))=\delta^{d-1}$ for the last inequality.
Of the $(M^{(\delta)})^{d-1}$ terms in the sum over $\pi_j(G)$, there are $({M^{(\delta)}}-1)^{d-1}$ terms such that $\mathbf{v}+\pi_j([0,\delta]^d)\subset \pi_j([0,1]^d)$. For these terms, by stationarity, $\P(A_2^{(\mathbf{v})})=\P(A_2^{(\mathbf{0})}) \leq K_1\delta$ for some $K_1\in\R^+$ by Lemma~\ref{Lemma2}. For the remaining ${M^{(\delta)}}^{d-1} - ({M^{(\delta)}}-1)^{d-1}$ terms in the sum, the bound $\P(A_2^{(\mathbf{v})}) \leq 1$ suffices. Thus,
\begin{align*}
\E\Big[\Big|\delta^{d-1}
&\sum_{\mathbf{v}\in\pi_j(G)}
n_{\mathbf{v}} - \int_{\vect(\be_j^\perp)\cap[0,1]^{d}}n_{\mathbf{s}} \rmd \mathbf{s} \Big|\Big]\\
&\leq 2\delta^{d-1}\Big(({M^{(\delta)}}-1)^{d-1}(K_1\delta)^{{1-\eps} } + \big({M^{(\delta)}}^{d-1} - ({M^{(\delta)}}-1)^{d-1}\big)\Big)\big(\E\big[\big|n_\mathbf{0}\big|^{\frac 1 \eps}\big]\big)^{ \eps } = O\big(\delta^{ {1-\eps} }\big)
\end{align*}
 and~\eqref{eq:step2_CLT} holds which completes the proof.

\subsection{Auxiliary results for the proof of the joint central limit theorem}\label{proofCTLSection}
 
Analogously to \eqref{eq:Papproxcontrol}, the following result provides an $L^1$ control of the approximation error of the first coordinate of $I_{1}$ (\emph{i.e.}, the estimated volume) in the proof of Theorem~\ref{CLTjoint}. To this end, notice that we cannot directly apply an approximation inequality such as Proposition 4 in  \cite{bierme:hal-02793752} as the function $y\mapsto \mathbbm{1}_{\{y\ge u\}}$ appearing in  the definition of the volume in~\eqref{eq:UestCT2} is not Lipschitz.

\begin{Proposition}\label{prop:AireI1}
Let $X$ be an isotropic random field  satisfying  Assumption~\ref{A0}. Let  $C^T_d(u)$ as in~\eqref{eq:UestCT2} and $\widehat{C}_d^{(\delta, T)}(u)$   as in~\eqref{eq:C2pixel}. It holds that 
\begin{align}\label{eq:RI1}
\E\left[\left |\widehat{C}_d^{(\delta, T)}(u)  - C^T_d(u) \right|\right]\le K {\delta},
\end{align} 
where $K\in\R^+$  is a constant independent of $\delta$ and $\sigma_d(T)$.
\end{Proposition}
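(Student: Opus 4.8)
The goal is an $L^1$ bound between the Riemann-sum–type volume estimator $\widehat{C}_d^{(\delta,T)}(u)$ in~\eqref{eq:C2pixel} and the true normalized volume $C^T_d(u)$ in~\eqref{eq:UestCT2}. The natural strategy is to bound the expected $\sigma_d$-measure of the symmetric difference between the ``pixelated'' excursion set $\bigcup_{t\in\mathbb{G}(\delta,T)}V_{\ii}(\delta)\,\I{X(t)\ge u}$ and the true excursion set $E^T_X(u)$, and to observe that this symmetric difference is contained in a neighbourhood of the level set $L_X^T(u)$ of width of order $\delta$. First I would write
$$\E\big[\big|\widehat{C}_d^{(\delta,T)}(u) - C^T_d(u)\big|\big] \le \frac{1}{\sigma_d(T)}\sum_{t\in\mathbb{G}(\delta,T)}\E\Big[\Big|\int_{V_{\ii}(\delta)}\big(\I{X(t)\ge u}-\I{X(s)\ge u}\big)\,\rmd s\Big|\Big],$$
and then use $\big|\int_{V_{\ii}(\delta)}(\I{X(t)\ge u}-\I{X(s)\ge u})\,\rmd s\big|\le \int_{V_{\ii}(\delta)}\I{\exists s'\in V_{\ii}(\delta):X(s')=u}\,\rmd s = \delta^d\,\I{L_X(u)\cap V_{\ii}(\delta)\ne\emptyset}$, since within a single cell the two indicators can disagree only if $X$ crosses level $u$ inside that cell (using that $X$ is continuous).

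This reduces the problem to controlling $\P\big(L_X(u)\cap V_{\ii}(\delta)\ne\emptyset\big)$. By stationarity this probability is $\P\big(L_X(u)\cap[0,\delta]^d\ne\emptyset\big)$, and the key claim is that it is $O(\delta)$. This is precisely the type of estimate already carried out inside the proof of Lemma~\ref{Lemma2} (the $H_1$ step): one covers $[0,\delta]^d$, uses Taylor's theorem on $X$ to get $|u-X(\0)|\le \sqrt d\,\delta\sup_{[0,1]^d}\|\nabla X\|_2$ whenever the cell meets $L_X(u)$, and bounds the resulting probability by the supremum of the density of $X(\0)$ near $u$ times $\E[\sup_{[0,1]^d}\|\nabla X\|_2]$. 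Since Proposition~\ref{prop:AireI1} only assumes~\ref{A0} (not~\ref{A3}), I would instead invoke Lemma~\ref{lem:crossing_bound} directly: taking $t=\delta\be_j$ for any $j$ (or rather bounding the cell-crossing event by a union of segment-crossing events along the $d$ edge directions of the cube) gives $\P\big(X(\0)\le u<X(\delta\be_j)\big)\le C_{d-1}^*(u)\delta$, and $\P\big(L_X(u)\cap[0,\delta]^d\ne\emptyset\big)$ is bounded by a constant (depending only on $d$) times such segment-crossing probabilities, hence is $\le K_d\,C_{d-1}^*(u)\,\delta$ — note $C_{d-1}^*(u)$ is finite by~\ref{A0}. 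Actually the cleanest route: the event $\{L_X(u)\cap[0,\delta]^d\ne\emptyset\}$ implies that along \emph{some} axis-parallel line through a vertex, or more robustly, one can directly reuse the $K_X(u)$ covering argument of Lemma~\ref{lem:crossing_bound} with $[0,\delta]^d$ in place of the segment $\bar t$, giving $\P\big(\0\in \{L_X(u)+C\}\big)$ for the Minkowski-type neighbourhood $C=[-\delta,0]^d$, whose $\sigma_d$-content near $L_X(u)$ is $O(\delta\cdot\sigma_{d-1}(L_X(u)\cap B))$ — this yields the bound $K\delta$ with $K$ proportional to $C_{d-1}^*(u)$.

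Assembling: there are $\sigma_d(T)/\delta^d$ cells, each contributing at most $\delta^d\cdot K_d C_{d-1}^*(u)\delta/\sigma_d(T)$ after dividing by $\sigma_d(T)$, so the total is $\le K_d\,C_{d-1}^*(u)\,\delta =: K\delta$, with $K$ independent of $\delta$ and of $\sigma_d(T)$, as claimed. The main obstacle — and the only nontrivial point — is establishing the $O(\delta)$ bound on $\P\big(L_X(u)\cap[0,\delta]^d\ne\emptyset\big)$ under only Assumption~\ref{A0}; I expect the cleanest path is the geometric covering argument of Lemma~\ref{lem:crossing_bound} adapted from a segment to a small cube, rather than a union bound over $d$ coordinate directions, since the latter requires a little care to ensure that a level-set hitting the cube forces a one-dimensional crossing along some axis-parallel segment inside the cube (which is true but needs the intermediate value theorem applied to a suitable pair of vertices). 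Everything else is the routine symmetric-difference decomposition and stationarity.
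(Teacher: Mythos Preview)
Your proposal is correct in spirit but takes a detour that the paper avoids. After the same cell-wise decomposition, the paper simply pulls the absolute value \emph{inside} the integral,
\[
\Big|\int_{t+[0,\delta)^d}\big(\I{X(t)\ge u}-\I{X(s)\ge u}\big)\,\rmd s\Big|
\le \int_{t+[0,\delta)^d}\big|\I{X(t)\ge u}-\I{X(s)\ge u}\big|\,\rmd s,
\]
takes expectation, writes $\E\big[|\I{X(t)\ge u}-\I{X(s)\ge u}|\big]=\P(X(t)<u\le X(s))+\P(X(s)<u\le X(t))$, and applies Lemma~\ref{lem:crossing_bound} directly to each term with the vector $t-s$. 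This gives $\le 2C_{d-1}^*(u)\|t-s\|_2\le 2\sqrt{d}\,C_{d-1}^*(u)\,\delta$, and summing over the $\sigma_d(T)/\delta^d$ cells finishes the proof with $K=2\sqrt{d}\,C_{d-1}^*(u)$.

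Your route bounds the cell integral by $\delta^d\,\I{L_X(u)\cap V_{\ii}(\delta)\ne\emptyset}$ and then needs $\P\big(L_X(u)\cap[0,\delta]^d\ne\emptyset\big)=O(\delta)$. This is true, and your Minkowski-style adaptation of the proof of Lemma~\ref{lem:crossing_bound} (replacing the segment $\bar t$ by the cube $[0,\delta]^d$) would establish it, but it is extra work that the paper's pointwise application of Lemma~\ref{lem:crossing_bound} sidesteps entirely. Your alternative suggestion---a union bound over axis-parallel segments via the intermediate value theorem---is more delicate than you indicate: the event that $L_X(u)$ meets the cube does not by itself force a sign change of $X-u$ between two \emph{fixed} vertices of the cube (the level set could graze the interior), so one would also need that $u$ is a.s.\ a regular value (which does follow from~\ref{A0}) together with an enlargement argument. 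None of this is needed if you keep the integrand $|\I{X(t)\ge u}-\I{X(s)\ge u}|$ and invoke Lemma~\ref{lem:crossing_bound} pair-by-pair.
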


\begin{Remark}\rm\label{improved_{area}}\rm
Proposition~\ref{prop:AireI1} provides a control of the discretization error in the computation of the volume. This error is of the same order as   in Proposition~\ref{prp:Phat_L2_P1} (item \emph{ii}). This control can be greatly improved by asking more stringent assumptions: \emph{e.g.}, if $X$ is quasi-associated (see \cite{alexander2007limit} and \cite{bulinski2012central}) and under a decay assumption for its correlation function.  In this  setting, the bound in~\eqref{eq:RI1} can be improved to get an upper bound in $L^{2}$ with a faster rate in $\delta (\sigma_{d}(T))^{-1}$ instead of $\delta$. The interested reader is referred to the Appendix \ref{QuasiAssociativitySection}.
\end{Remark}

\begin{proof}[Proof of Proposition~\ref{prop:AireI1}]
Remark that
\begin{align*}
\left |\widehat{C}_d^{(\delta, T)}(u)  - C^T_d(u) \right|
&= \left | \frac{\delta^d}{\sigma_d(T)} \sum_{t \in \mathbb{G}(\delta, T)} \left ( \I{X(t)\geq u} - \frac{1}{\delta^d} \int_{t + [0,\delta)^d} \I{X(s)\geq u}\, \rmd s\right ) \right |\\
&= \left | \frac{1}{\sigma_d(T)} \sum_{t \in \mathbb{G}(\delta, T)} \int_{t + [0,\delta)^d}\left ( \I{X(t)\geq u} -  \I{X(s)\geq u}\right )\, \rmd s \right |\\
&\leq \frac{1}{\sigma_d(T)} \sum_{t \in \mathbb{G}(\delta, T)} \int_{t + [0,\delta)^d}\left | \I{X(t)\geq u} -  \I{X(s)\geq u} \right |\, \rmd s.
\end{align*}
Therefore, by Lemma \ref{lem:crossing_bound}, it holds that
\begin{align*}
\E\left[\left |\widehat{C}_d^{(\delta, T)}(u)  - C^T_d(u) \right|\right]
&\leq \frac{1}{\sigma_d(T)} \sum_{t \in \mathbb{G}(\delta, T)} \int_{t + [0,\delta)^d} \Big(\P\left(X(t) < u \leq X(s)\right) + \P\left(X(t) \geq u > X(s)\right)\Big)\, \rmd s\\
&\leq \frac{1}{\sigma_d(T)} \sum_{t \in \mathbb{G}(\delta, T)} \int_{t + [0,\delta)^d} {2 C_{d-1}^*(u)}\|t-s\|_2 \, \rmd s
\leq {2 \sqrt{d}C_{d-1}^*(u)}\delta.
\end{align*}
\end{proof}

Corollary~\ref{cor:sqr} below describes the behaviour  of the second coordinate of the term  $I_4$  in our main Theorem~\ref{CLTjoint}, by using  the second order approximation in Theorem~\ref{prp:Pcrossing_Esurfacearea} for the  specific hypercubic   point-referenced $d$-honeycomb $\delta\dot{\mathcal{G}}$ in Section~\ref{hypercubic:section}.

\begin{Corollary}\label{cor:sqr}
Consider a discrete regular hypercubic  grid $\mathbb{G}(\delta, T)$   as in ~\eqref{G}.   Let $X$ be an isotropic random field satisfying Assumptions~\ref{A0},~\ref{A1} and~\ref{A2} for some $\eps\in(0,1)$.
Let $C_{d-1}^*(u)$ be as in~\eqref{defCast} and   $\widehat{C}_{d-1}^{(\delta, T_N)}(u)$ as  in~\eqref{eq:C1tildeNEW}. Then, it holds that $$\sqrt{\sigma_d(T_N)} \left(\E[\widehat{C}_{d-1}^{(\delta, T_N)}(u)] -   \frac{2 d}{\beta_d} C_{d-1}^{*}(u) \right)  \rightarrow  0$$
for $N \delta \rightarrow\infty$,  $ (N\delta)^{d/2} \delta^{1-\eps}   \rightarrow 0$   with  $\beta_d$ as in~\eqref{betad} and $\eps$ as in Assumption~\ref{A2}.
\end{Corollary}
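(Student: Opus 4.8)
The plan is to reduce the claimed convergence to zero to the second-order crossing estimate in Theorem~\ref{prp:Pcrossing_Esurfacearea}, applied direction by direction along the hypercubic lattice. First I would recall from \eqref{eq:C1tildeNEW} that, by stationarity,
\begin{align*}
\E[\widehat{C}_{d-1}^{(\delta, T_N)}(u)]
&= \frac{\delta^{d-1}}{\sigma_d(T_N)}\sum_{j=1}^d\,\underset{t+\delta\be_j\in\mathbb{G}(\delta,T_N)}{\sum_{t\in\mathbb{G}(\delta,T_N)}}\E\big[\big|\I{X(t)\ge u}-\I{X(t+\delta\be_j)\ge u}\big|\big]\\
&= \frac{\delta^{d-1}}{\sigma_d(T_N)}\sum_{j=1}^d \#\{t : t,\,t+\delta\be_j\in\mathbb{G}(\delta,T_N)\}\cdot 2\,\P\big(X(\0)\le u<X(\delta\be_j)\big),
\end{align*}
using that $\big|\I{a\ge u}-\I{b\ge u}\big|=\I{a\le u<b}+\I{b\le u<a}$ and that the two crossing events have equal probability by symmetry of a single coordinate increment (isotropy, or at least reflection invariance in one coordinate). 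Since $T_N=[-N\delta,N\delta]^d$, the cardinality of the admissible $t$ for each $j$ is exactly $(2N)^{d-1}(2N-1)$, and $\sigma_d(T_N)=(2N\delta)^d$, so the prefactor $\delta^{d-1}(2N)^{d-1}(2N-1)/(2N\delta)^d = (2N-1)/(2N\delta)$. Summing over the $d$ directions gives
$$\E[\widehat{C}_{d-1}^{(\delta, T_N)}(u)] = \frac{d(2N-1)}{N\delta}\,\P\big(X(\0)\le u<X(\delta\be_1)\big),$$
where I used isotropy to replace $\be_j$ by $\be_1$.

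Next I would invoke Theorem~\ref{prp:Pcrossing_Esurfacearea} with $\mathbf{w}=\be_1$ and $q=\delta$: under Assumptions~\ref{A0},~\ref{A1},~\ref{A2}, \eqref{eqn:second_item_thm2.1} gives a constant $K'\in\R^+$ with $0\le \frac{C_{d-1}^*(u)}{\beta_d}\delta - \P(X(\0)\le u<X(\delta\be_1)) \le K'\delta^{2-\eps}$. Substituting,
$$\E[\widehat{C}_{d-1}^{(\delta, T_N)}(u)] = \frac{d(2N-1)}{N\delta}\Big(\frac{C_{d-1}^*(u)}{\beta_d}\delta + O(\delta^{2-\eps})\Big) = \frac{d(2N-1)}{N}\cdot\frac{C_{d-1}^*(u)}{\beta_d} + \frac{d(2N-1)}{N\delta}O(\delta^{2-\eps}).$$
Since $\frac{d(2N-1)}{N} = 2d - \frac{d}{N} = 2d + O(1/N)$ and $\frac{d(2N-1)}{N\delta} = O(\delta^{-1})$, this yields
$$\E[\widehat{C}_{d-1}^{(\delta, T_N)}(u)] - \frac{2d}{\beta_d}C_{d-1}^*(u) = O\!\left(\frac1N\right) + O\!\left(\delta^{1-\eps}\right).$$
Multiplying by $\sqrt{\sigma_d(T_N)} = (2N\delta)^{d/2}$, I get that the quantity of interest is $O\big((N\delta)^{d/2}/N\big) + O\big((N\delta)^{d/2}\delta^{1-\eps}\big)$. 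The second term vanishes precisely under the stated hypothesis $(N\delta)^{d/2}\delta^{1-\eps}\to 0$. For the first term, note $(N\delta)^{d/2}/N = N^{d/2-1}\delta^{d/2}$; when $d\ge 2$ one has $d/2-1\ge 0$, and the bound $(N\delta)^{d/2}\delta^{1-\eps}\to 0$ together with $N\delta\to\infty$ forces $\delta\to 0$ fast enough that $N^{d/2-1}\delta^{d/2} = (N\delta)^{d/2-1}\delta \le (N\delta)^{d/2}\delta^{1-\eps}\cdot(N\delta)^{-1}\delta^{\eps}\to 0$ (using $N\delta\ge 1$ eventually and $\delta^{\eps}\to 0$), so both error terms go to zero. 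Hence $\sqrt{\sigma_d(T_N)}\big(\E[\widehat{C}_{d-1}^{(\delta, T_N)}(u)] - \frac{2d}{\beta_d}C_{d-1}^*(u)\big)\to 0$, as claimed.

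The only mild subtlety — and the step I would be most careful about — is the exact boundary bookkeeping for $\mathbb{G}(\delta,T_N)$: one must check that the set of lattice pairs $(t,t+\delta\be_j)$ with both endpoints in $\mathbb{G}(\delta,T_N)$ has the claimed cardinality $(2N)^{d-1}(2N-1)$, so that the prefactor simplifies cleanly and the $O(1/N)$ boundary correction is genuinely harmless after multiplication by $(2N\delta)^{d/2}$. This is where the hypothesis $\sigma_d(T_N)=(2N\delta)^d$ with integer $N$ (stated in Section~\ref{hypercubic:section}) is used. Everything else is a direct substitution of \eqref{eqn:second_item_thm2.1} and elementary asymptotics; no new probabilistic input is needed beyond Theorem~\ref{prp:Pcrossing_Esurfacearea}.
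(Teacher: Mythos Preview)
Your proof is correct and is exactly the argument the paper intends: its own proof merely says the result ``is a direct consequence of~\eqref{eq:C1tildeNEW}, the stationarity and isotropy of $X$, Equation~\eqref{eqn:second_item_thm2.1} in Theorem~\ref{prp:Pcrossing_Esurfacearea}, and the fact that by construction $\mathrm{Card}(\mathbb G(\delta,T))=(2N)^d$ and $\sigma_d(T_N)=(2N\delta)^d$,'' and you have carefully unpacked precisely those ingredients. In fact you are more explicit than the paper about the boundary correction $(2N-1)/(2N)$ and its harmless $O(1/N)$ effect after multiplication by $(2N\delta)^{d/2}$; for the latter, note also the simpler bound $(N\delta)^{d/2-1}\delta\le (N\delta)^{d/2}\delta^{1-\eps}$ (valid once $N\delta\ge 1$ and $\delta\le 1$), which already gives the conclusion.
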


\begin{proof}  It is a direct consequence of~\eqref{eq:C1tildeNEW}  the stationarity and isotropy of $X$, Equation \eqref{eqn:second_item_thm2.1} in  Theorem~\ref{prp:Pcrossing_Esurfacearea}, and the fact that by construction Card$\left(\mathbb G(\delta, T)\right)=(2N)^{d}$ and $\sigma_d(T_N)= (2 N \delta)^{d}$.
\end{proof}
 
Finally, we focus on the term $I_2$ in the proof of Theorem~\ref{CLTjoint}. The following theorem establishes a  joint central limit theorem for the  vector  $\left(C^{T_n}_d(u), \widetilde{C}^{T_n}_{d-1}(u)\right)$ in the case where  $T_n $ is   a sequence of hypercubes in $\R^d$  such that  $T_n \nearrow \R^d$ as $n \rightarrow\infty$. Our result is based on techniques used in \cite{Iribarren}.

\begin{Theorem}\label{CLTjointContinous}
Let $u$ be a fixed level in $\R$.
Assume that  $X$   is  strongly mixing as in Definition~\ref{mixingfield} such that for some $\eta >0$, the mixing satisfies the rate condition
\begin{align*}
 \sum_{r=1}^{+\infty} r^{3 d-1} \alpha(r)^{\frac{\eta}{2+\eta}} < + \infty,
\end{align*}
and  $\E[\sigma_{d-1}\big(L_X(u) \cap [0,1]^{d}\big)^{2+\eta}] < + \infty$. Let $C^{T_n}(u):=\left(C^{T_n}_d(u), \widetilde{C}^{T_n}_{d-1}(u)\right)^t$.
Let $(T_n)_{n\geq 1}$ be a sequence of hypercubes in $\R^d$ such that $\sigma_d(T_n)= (2 n)^{d}$.
Then there exists  a finite covariance matrix $\Sigma(u)$ such that, if  $\Sigma(u)>0$,
\begin{align*}
\sqrt{\sigma_d(T_n)} \left(C^{T_n}(u) - \E[C^{T_n}(u)]\right) & \stackrel{\mathcal{L}}\longrightarrow \mathcal{N}_{2}\big(0,\Sigma(u)\big),\end{align*} as $n\to\infty$.
\end{Theorem}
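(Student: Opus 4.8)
The plan is to prove the bivariate convergence through the Cram\'er--Wold device, reducing it to a one-dimensional central limit theorem for each linear combination $a\,C^{T_n}_d(u)+b\,\widetilde C^{T_n}_{d-1}(u)$, $(a,b)\in\R^2$, and to treat the surface-area coordinate --- which carries the distributional factor $\delta_0(X(\cdot)-u)$, understood via the coarea identity in~\eqref{def:Pp} --- by a mollification argument adapting \cite{Iribarren}. First I would write $\sigma_d(T_n)\big(a C^{T_n}_d(u)+b\widetilde C^{T_n}_{d-1}(u)\big)=\int_{T_n}\big(a\I{X(t)\ge u}+b\,\delta_0(X(t)-u)\|\nabla X(t)\|_1\big)\rmd t$ and partition $T_n$ into translates of $[0,1]^d$ indexed by $\Z^d$, so that each coordinate becomes a sum over $\Z^d$ of block variables of the form $\int_{\ii+[0,1]^d}(\cdot)\rmd t$; since such a block is measurable with respect to $\sigma\{X(s):s\in\ii+[-1,2]^d\}$ (gradients of $X$ on the closed cube being pointwise limits of field values on a slightly larger cube), the induced $\Z^d$-indexed field is stationary and strongly mixing with mixing coefficient bounded by $\alpha(r-c)$ for a dimensional constant $c$, hence still obeys $\sum_r r^{3d-1}\alpha(r)^{\eta/(2+\eta)}<\infty$. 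For the volume coordinate alone ($b=0$) the block field is bounded and the statement is the classical CLT for stationary mixing random fields (\cite{Bolthausen}; see also Corollary~3 in \cite{dedecker1998central}), so the work lies in the surface-area term.

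The key step is to regularise the level-set functional. For $\eps>0$ I would set $\phi_\eps(x):=\tfrac1{2\eps}\I{|x|\le\eps}$ and $\widetilde C^{T_n,\eps}_{d-1}(u):=\frac1{\sigma_d(T_n)}\int_{T_n}\phi_\eps(X(t)-u)\|\nabla X(t)\|_1\rmd t$, which by the coarea formula equals $\frac1{2\eps\,\sigma_d(T_n)}\int_{u-\eps}^{u+\eps}\int_{L^{T_n}_X(v)}\tfrac{\|\nabla X(s)\|_1}{\|\nabla X(s)\|_2}\sigma_{d-1}(\rmd s)\rmd v$, so the integrand $\|\nabla X\|_1/\|\nabla X\|_2\le\sqrt d$ is bounded. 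Using the Kac--Rice formulas for the first two moments of the level-set measure (legitimate under Assumption~\ref{A0}, which rules out critical points at the level $u$ a.s.; see \cite{cabana1987affine} and Chapter~11 of \cite{AT07}), the hypothesis $\E[\sigma_{d-1}(L_X(u)\cap[0,1]^d)^{2+\eta}]<\infty$, and the boundedness of the density of $(X(\0),\nabla X(\0))$, I would show that the level-set integral is mean-square continuous at $v=u$ uniformly in the growing window, yielding $\limsup_n\sigma_d(T_n)\,\mathrm{Var}\big(\widetilde C^{T_n}_{d-1}(u)-\widetilde C^{T_n,\eps}_{d-1}(u)\big)=:\omega(\eps)$ with $\omega(\eps)\to0$ as $\eps\to0$. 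This is the variance counterpart of the $L^1$ control of Proposition~\ref{prp:Phat_L2_P1}, item ii), and it is where the assumptions on $X$ are most heavily used.

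For each fixed $\eps>0$ the integrand $a\I{X(t)\ge u}+b\,\phi_\eps(X(t)-u)\|\nabla X(t)\|_1$ is a genuine measurable function of $(X(t),\nabla X(t))$; after centering and blocking over unit cubes one obtains a stationary, centered, strongly mixing $\Z^d$-field whose blocks have a finite $(2+\eta)$-th moment (by the moment hypothesis together with the density bound in~\ref{A0}, which controls the level-set measure at levels near $u$), and I would then invoke Bolthausen's central limit theorem to get $\sqrt{\sigma_d(T_n)}\big(a C^{T_n}_d(u)+b\widetilde C^{T_n,\eps}_{d-1}(u)-\E[\,\cdot\,]\big)\stackrel{\mathcal L}\longrightarrow\mathcal N\big(0,s^2_\eps(a,b)\big)$, where $s^2_\eps(a,b):=\lim_n\sigma_d(T_n)\,\mathrm{Var}\big(a C^{T_n}_d(u)+b\widetilde C^{T_n,\eps}_{d-1}(u)\big)$ exists and is finite by Davydov's covariance inequality (\cite{doukhan1994mixing}) and dominated convergence; here the reinforced rate $r^{3d-1}$, rather than the $r^{d-1}$ needed for the bare CLT, is precisely what guarantees absolute convergence of the covariance integrals defining $\Sigma(u)$ and the requisite uniform integrability.

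Finally I would pass to the limit $\eps\to0$: by the previous step $s^2_\eps(a,b)\to s^2(a,b):=a^2\Sigma_{(1,1)}(u)+2ab\,\Sigma_{(1,2)}(u)+b^2\Sigma_{(2,2)}(u)$, while the perturbation $\sqrt{\sigma_d(T_n)}\big(\widetilde C^{T_n,\eps}_{d-1}(u)-\widetilde C^{T_n}_{d-1}(u)-\E[\,\cdot\,]\big)$ has second moment at most $\omega(\eps)+o_n(1)$, so a standard triangular-array approximation lemma for weak convergence upgrades the display to $\sqrt{\sigma_d(T_n)}\big(a C^{T_n}_d(u)+b\widetilde C^{T_n}_{d-1}(u)-\E[\,\cdot\,]\big)\stackrel{\mathcal L}\longrightarrow\mathcal N\big(0,s^2(a,b)\big)$ for every $(a,b)$, and Cram\'er--Wold concludes, with $\Sigma(u)$ symmetric positive semidefinite (positive definite under the stated non-degeneracy). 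The explicit forms of $\Sigma_{(1,1)}(u)$ and $\Sigma_{(2,2)}(u)$ follow from identifying the covariance densities of $\I{X(\0)\ge u}$ (elementary) and of $\delta_0(X(\0)-u)\|\nabla X(\0)\|_1$ (via Kac--Rice, using the last equality in~\eqref{def:Pp}), while $\Sigma_{(1,2)}(u)$ is kept as the stated limit. The main obstacle is the uniform-in-$n$ variance estimate of the second step: it forces second-moment Kac--Rice computations for the level set of $X$ over an unbounded family of windows, combined with the mixing decay; everything downstream --- the CLT for the smooth surrogate and the Slutsky-type passage to the limit --- is then routine.
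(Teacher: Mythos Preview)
Your overall architecture --- Cram\'er--Wold reduction, blocking over unit cubes indexed by $\Z^d$, inheritance of mixing with a dimensional shift in the coefficient, then a mixing CLT --- is exactly the paper's. The divergence is the mollification step: you regularise the level-set functional by $\phi_\eps$, apply Bolthausen to the smooth surrogate, and then pass $\eps\to0$ via a uniform-in-$n$ variance bound that you yourself flag as ``the main obstacle''. The paper bypasses this entirely.

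The point is that $\widetilde C^{T_n}_{d-1}(u)$ is already a genuine random variable through the \emph{first} equality in~\eqref{def:Pp}, not a distribution requiring regularisation. For $\mathbf t=(t_1,t_2)\in\R^2$ the paper sets
\[
\xi_\ii := t_1\int_{L_X(u)\cap V_\ii(1)}\frac{\|\nabla X(s)\|_1}{\|\nabla X(s)\|_2}\,\sigma_{d-1}(\rmd s) + t_2\,\sigma_d\big(E_X(u)\cap V_\ii(1)\big),
\]
and since the integrand $\|\nabla X\|_1/\|\nabla X\|_2$ is bounded by $\sqrt d$, one has $|\xi_\ii|\le\|\mathbf t\|_2\big(\sqrt d\,\sigma_{d-1}(L_X(u)\cap V_\ii(1))+1\big)$ almost surely. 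The hypothesis $\E[\sigma_{d-1}(L_X(u)\cap[0,1]^d)^{2+\eta}]<\infty$ therefore delivers the $(2+\eta)$-moment of the block variable \emph{directly}, and Proposition~1 and Lemma~1 of \cite{Iribarren} apply to $\{\xi_\ii\}$ without any intermediate $\eps$.

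Your route is not wrong, but the variance estimate $\omega(\eps)\to0$ uniformly in $n$ demands second-moment Kac--Rice control at nearby levels $v\in(u-\eps,u+\eps)$ and an argument that the moment hypothesis at level $u$ extends to a neighbourhood --- additional work with its own regularity requirements. The paper's observation that the coarea identity in~\eqref{def:Pp} can be read from right to left makes all of this superfluous: the $\delta_0$ is only a notational device, and the block functional is already tame.
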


\begin{proof}
Let $\mathbf{t}=(t_1, t_2)\in\R^{2}$ and $V_\ii(1)$  be as in Section~\ref{hypercubic:section}.    Write
$$\xi_\mathbf{i} := t_1\int_{L_X(u) \cap V_\ii(1)} \frac{\|\nabla X(s)\|_1}{\|\nabla X(s)\|_2}\ \sigma_{d-1}(\rmd s) + t_{2}\sigma_d\big(E_X(u)\cap V_\ii(1)\big),$$
for $\mathbf{i}\in\Z^d$, which makes $\xi := \{\xi_\mathbf{i}:\mathbf{i}\in\Z^d\}$ a stationary random field on $\Z^d$. It is straightforward to see that $\xi$ inherits the mixing property of the random field $X$ and that the mixing coefficients of $\xi$ satisfy $\alpha_\xi(n+\lceil\sqrt{d}\rceil) \leq \alpha_X(n)$ for all $n\in\N^+$, since the diameter of $V_\ii(1)$ is $\sqrt{d}$. Notice also that $|\xi  _\mathbf{i} | \leq ||\mathbf{t}||_2 \big(\sqrt{d}\sigma_{d-1}\big(L_X(u) \cap V_\ii(1)\big) + \sigma_d\big(E_X(u)\cap V_\ii(1)\big)\big) \leq  ||\mathbf{t}||_2 \big(\sqrt{d}\sigma_{d-1}\big(L_X(u) \cap V_\ii(1)\big) +  1\big) $ almost surely, so  the $2+\eta$ moment of $\xi_\mathbf{i}$ is finite.
Finally, since
\begin{equation*}
\langle\mathbf{t}, C^{T_n}(u) \rangle = \sum_{\mathbf{i}\in\Z^d} \xi_\mathbf{i}\,\I{V_\ii(1)\,\subset \,T_n},
\end{equation*}
the proof is completed with an application of Proposition 1 and Lemma 1 in \citet{Iribarren}  and the Cram{\'e}r-Wold device.
\end{proof}

\bigskip
\textbf{Acknowledgments}:    This work has been supported by the French government, through the 3IA C\^{o}te d'Azur Investments in the Future project managed by the National Research Agency (ANR) with the reference number ANR-19-P3IA-0002. This work has been partially supported  by the project  ANR MISTIC (ANR-19-CE40-0005).

\bibliographystyle{apalike}
\bibliography{biblio.bib}

\bigskip

\begin{appendix}

\section{Supplementary material}\label{sec:annexe}

\subsection{Proposition~\ref{prop:AireI1} improved under the assumption of quasi-associativity}\label{QuasiAssociativitySection}

The obtained bound   in Proposition~\ref{prop:AireI1} comes without any assumption on the correlation function of the field $X$.  Indeed the proof Proposition~\ref{prop:AireI1} only focuses on the control of the pixelisation error in a fixed domain. The growing domain $T_N$ is considered \emph{a posteriori} without taking into account the asymptotic spatial independency between  pixels with respect to their Euclidean  distance. Obviously the bound in~\eqref{eq:Papproxcontrol} only  depends   on $\delta$ and not $\sigma_{d}(T)$.

In this supplementary material we aim to improve the  bound   in Proposition~\ref{prop:AireI1}
by adding dependency conditions on the field $X$ and considering a $L^{2}$ control.    The proposed $L^2$ control in Proposition~\ref{prop:AireRDimensionD} below can be used to transfer for instance a multilevel central limit theorem provided for  ${C}_d^{T}(u)$  in~\eqref{eq:UestCT2} to $\widehat{C}_d^{(\delta,T)}(u)$  in~\eqref{eq:C2pixel} (see  Corollary~\ref{CLT} below) under to the sole assumptions  $N \delta\to \infty$ and   $\delta \to  0$. To improve the result of Proposition~\ref{prop:AireI1} we will use that  distant pixels have a small correlation.

\begin{Proposition}\label{prop:AireRDimensionD}
Let $X$  a  random field  satisfying  Assumption~\ref{A0} with $\V(X(\0))=\upsilon^2$, $\upsilon > 0$.  Suppose moreover that  $X(\0)$ has density bounded by $M$ and that
\begin{enumerate}[label={($\mathcal{H}$1)}]\item\label{A1Supp}  $X$ is quasi-associated, i.e.,
\[| {\rm Cov}(f(X_{I}), g(X_{J}))\le |{\rm Lip}(f){\rm Lip}(g)|\sum_{s\in I}\sum_{t\in J}|{\rm Cov}(X(s),X(t))|,\]
\end{enumerate}
for all disjoint finite sets $I, J \subset T$ and any Lipschitz functions $f:\R^{I}\mapsto \R$ and $g:\R^{J}\mapsto \R$. \newline Assume the following decay condition for the correlation function  $\rho(t) = \corr(X(\0), X(t)),\ t\in\R^{d}$:
\begin{enumerate}[label={($\mathcal{H}$2)}]
\item\label{A2Supp}  $|\rho(t)|\leq (1+\|t\|)^{-\gamma}$, $\gamma> {3d}$.
\end{enumerate}
Then, it holds that
\begin{align}\label{eq:VdR}
\V\left({C}_d^{T}(u)   - \widehat{C}_d^{(\delta,T)}(u)\right)\le \kappa \frac{\delta}{\sigma_d(T)},
\end{align}
where $\kappa$  is a constant only depending on  $d,\,M,\,\upsilon^{2}, \, C^{*}_{d-1}(u)$ in \eqref{defCast} and  $\gamma$.
\end{Proposition}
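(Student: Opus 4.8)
The plan is to write the pixelisation error as a cell-indexed average, expand its variance into a double sum of covariances, and then control that sum by separating the (few) near-diagonal terms — which are small because level crossings are rare — from the (many) off-diagonal terms — which are small because of the polynomial decay $(\mathcal H2)$. The decay exponent $\gamma>3d$ is exactly what makes the resulting lattice sum converge.

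Concretely, set for $\mathbf t\in\mathbb G(\delta,T)$
\[
Y_{\mathbf t}:=\int_{\mathbf t+[0,\delta)^d}\big(\mathbbm 1_{\{X(s)\ge u\}}-\mathbbm 1_{\{X(\mathbf t)\ge u\}}\big)\,\rmd s ,
\]
so that $C^T_d(u)-\widehat C^{(\delta,T)}_d(u)=\frac1{\sigma_d(T)}\sum_{\mathbf t\in\mathbb G(\delta,T)}Y_{\mathbf t}$ with $\E[Y_{\mathbf t}]=0$ by stationarity. Expanding the variance, using stationarity of $X$ (so $\Cov(Y_{\mathbf t},Y_{\mathbf t'})=\Cov(Y_{\0},Y_{\mathbf t'-\mathbf t})$) and $\mathrm{Card}\,\mathbb G(\delta,T)\le\sigma_d(T)/\delta^d$, one gets
\[
\V\big(C^T_d(u)-\widehat C^{(\delta,T)}_d(u)\big)\le\frac1{\sigma_d(T)\,\delta^d}\sum_{\mathbf z\in\delta\Z^d}\big|\Cov(Y_{\0},Y_{\mathbf z})\big| ,
\]
so it suffices to prove $\sum_{\mathbf z\in\delta\Z^d}|\Cov(Y_{\0},Y_{\mathbf z})|\le\kappa'\delta^{d+1}$.

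For the near-diagonal part I would use that crossings are rare: by Cauchy--Schwarz inside the integral and Lemma~\ref{lem:crossing_bound},
\[
\E[Y_{\0}^2]\le\delta^d\!\!\int_{[0,\delta)^d}\!\!\E\big[\big|\mathbbm 1_{\{X(s)\ge u\}}-\mathbbm 1_{\{X(\0)\ge u\}}\big|\big]\,\rmd s\le 2\delta^d\!\!\int_{[0,\delta)^d}\!\!C^*_{d-1}(u)\,\|s\|_2\,\rmd s\le 2\sqrt d\,C^*_{d-1}(u)\,\delta^{2d+1},
\]
so $|\Cov(Y_{\0},Y_{\mathbf z})|\le\E[Y_{\0}^2]\le\kappa_1\delta^{2d+1}$ for every $\mathbf z$; this handles the $O(\delta^{-d})$ indices with $\|\mathbf z\|\le1$, whose contribution is $O(\delta^{2d+1}\cdot\delta^{-d})=O(\delta^{d+1})$. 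For the distant indices $\|\mathbf z\|>1$ (so the cells $[0,\delta)^d$ and $\mathbf z+[0,\delta)^d$ are separated by a distance $\asymp\|\mathbf z\|$) I would invoke quasi-association $(\mathcal H1)$: since $\mathbbm 1_{\{\cdot\ge u\}}$ is not Lipschitz, replace it by a Lipschitz proxy at a scale $\eta=\eta(\|\mathbf z\|)$ — in fact a dyadic family of proxies, so that the smoothing error can be estimated scale by scale — bound the proxy covariances via $(\mathcal H1)$ by a sum of $|\Cov(X(a),X(b))|\le\upsilon^2(1+\|a-b\|)^{-\gamma}$ over the two cells, and control the approximation errors using the density bound $M$ of $X(\0)$ together with Lemma~\ref{lem:crossing_bound} (keeping this last input is essential, since it is what carries an extra factor $\delta$ through the smoothing). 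Optimising $\eta$ against $\|\mathbf z\|$ should give $|\Cov(Y_{\0},Y_{\mathbf z})|\le\kappa_3\,\delta^{2d+1}(1+\|\mathbf z\|)^{-\gamma/3}$, whose sum over $\delta\Z^d$ is $\kappa_3\delta^{2d+1}\cdot\delta^{-d}\int_{\R^d}(1+\|w\|)^{-\gamma/3}\rmd w$, finite precisely because $\gamma/3>d$. Adding the two regimes yields $\sum_{\mathbf z}|\Cov(Y_{\0},Y_{\mathbf z})|\le\kappa'\delta^{d+1}$, hence $\V(\cdot)\le\kappa\delta/\sigma_d(T)$ with $\kappa$ depending only on $d,M,\upsilon^2,C^*_{d-1}(u),\gamma$.

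The hard part is exactly this distant-cell covariance estimate. A naive single-scale Lipschitz smoothing of the indicators loses the $\delta$ coming from the rarity of level crossings and only gives $|\Cov(Y_{\0},Y_{\mathbf z})|\lesssim\delta^{2d}(1+\|\mathbf z\|)^{-\gamma/3}$, which after summation delivers merely $O(1/\sigma_d(T))$ — the same order as $\V(\widehat C^{(\delta,T)}_d(u))$ itself — so no improvement. Recovering the missing $\delta$ forces one to feed the crossing-rarity bound of Lemma~\ref{lem:crossing_bound} through the multiscale approximation (estimating each dyadic layer both by quasi-association and by a Cauchy--Schwarz bound that already contains a factor $\delta$, and interpolating), rather than only through a final Cauchy--Schwarz step; this bookkeeping is the technical core of the argument, and the strengthening of the decay hypothesis to $\gamma>3d$ is what it ultimately requires.
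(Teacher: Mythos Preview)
Your overall architecture --- cell decomposition, variance expansion, Cauchy--Schwarz plus Lemma~\ref{lem:crossing_bound} for the near-diagonal terms, quasi-association for the distant ones --- is exactly the paper's route. The paper, however, does \emph{not} use a multiscale smoothing: it invokes a single-scale Lipschitz approximation of the indicator (their Lemma~\ref{LemmaCOV}, yielding $|\Cov(\mathbbm 1_{U\ge u},\mathbbm 1_{V\ge v})|\le K_M|\Cov(U,V)|^{1/3}$), obtains the pointwise bound $|\E[d_\ii(t)d_{\ii'}(t')]|\le C(1+\|t_{\ii',\delta}-t_{\ii+\1,\delta}\|)^{-\gamma/3}$, and then simply takes the pointwise minimum of this with the Cauchy--Schwarz bound $C\delta$ before summing over lattice differences $\ell=\ii'-\ii$.

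You are right to flag the far-cell sum as the crux, and in fact the same issue is present in the paper's computation. Summing $\min\{(1+\delta\|\ell\|)^{-\gamma/3},\delta\}$ over $\ell\in\Z^d$, the tail $\|\ell\|>1/\delta$ contributes $\sum_{\|\ell\|>1/\delta}(1+\delta\|\ell\|)^{-\gamma/3}$, which is of order $\delta^{-d}$ (compare with the Riemann integral $\delta^{-d}\int_{\|p\|>1}(1+\|p\|)^{-\gamma/3}\rmd p$), not $O(1)$ as the paper's change of variables for $S_2$ asserts --- the substitution $p=\delta\ell$ does not send $\Z^d$ to $\Z^d$ and carries a Jacobian $\delta^{-d}$. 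With that correction the final variance bound lands at $O(1)/\sigma_d(T)$ rather than the stated $O(\delta)/\sigma_d(T)$, which is precisely the shortfall you warn about. Your proposed multiscale fix, on the other hand, is too schematic to verify: a straight interpolation between the two available bounds gives at best $|\Cov(Y_\0,Y_z)|\le C\,\delta^{2d+\theta}(1+\|z\|)^{-(1-\theta)\gamma/3}$, and summability over $\delta\Z^d$ forces $(1-\theta)\gamma/3>d$, i.e.\ $\theta<1-3d/\gamma<1$, still short of the needed exponent. In short, you have located the genuine obstruction, but neither your sketch nor the paper's argument as written fully resolves it.
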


\begin{Remark}[Quasi-associativity property]\rm
Assumption~\ref{A1Supp}  is called  \textit{quasi-associativity property} and is classically used  to get the asymptotic normality of  ${C}^{T}_d(u)$ in~\eqref{eq:UestCT2} (see for instance  \cite{bulinski2012central}, \citet{Spodarev13}).
\cite{bulinski1998asymptotical} shows that positively associated random fields are quasi-associated,
\cite{shashkin2002quasi} ensures that Gaussian fields are quasi-associated and \cite{alexander2007limit} that shot noise fields are also quasi-associated (see Theorem 1.3.8).   Notice that if  $X$ is a  stationary Gaussian  random field on $\R^d$,   then   \cite{bulinski2012central}  shows that Assumption~\ref{A2Supp} on the covariance function can be softened as  $\gamma> d$. The interested reader is referred to Lemma 2 and Equation (34) in \cite{bulinski2012central}  together with Lemma~\ref{LemmaCOV},  postponed and proved  below. \end{Remark}

\begin{Remark}\rm\label{rem:growing_domain}\rm
\begin{sloppypar} The control~\eqref{eq:VdR} on the variance provides information about the amplitude of the error made when replacing the uncomputable quantity ${C}^{T}_d(u)$   in~\eqref{eq:UestCT2} with its accessible version $\widehat{C}_d^{(\delta,T)}(u)$  in~\eqref{eq:C2pixel}. Two regimes can be considered for this variance to be small:   \textit{i)} $T_{N}\nearrow \R^{d}$, \textit{i.e.} $N\delta\to \infty$, and $\delta\to \ell\ge 0$ or \textit{ii)} $\delta \rightarrow 0$ with the domain $T$ held constant. Both situations are of interest, but if one wishes to transfer a central limit result for growing domain $T$ from      the true volume ${C}_d^{T}(u)$ to $\widehat{C}_d^{(\delta,T)}(u)$   one should require both $T_{N}\nearrow \R^d$ and $\delta \to 0$ (see Corollary~\ref{CLT}   below).  \end{sloppypar}
\end{Remark}

\begin{proof}[Proof of Proposition~\ref{prop:AireRDimensionD}]
Let $\textbf{i} \in \Z^d$.  Recall that the reference point  of $\delta \dot{\mathcal{G}}$ (see Section~\eqref{hypercubic:section})  is denoted $t_{\ii, \delta} :=(\delta i_1, \ldots, \delta i_d)$. For $\ii$ and $\jj$ in $\big[|-N,N-1|\big]^{d}$, $\ii \ne \jj$ stands for $\exists \, k\in\{1,\ldots, d\}$, $i_{k}\ne j_{k}.$ Similarly, $\ii	> \jj$ (resp. $\ii	< \jj$)  means that $\forall\,   k\in\{1,\ldots, d\}$ $i_{k}>j_{k}$ (resp. $i_{k}<j_{k}$), $\ii =  \ii'$ means $i_1=i'_1, \ldots, i_d = i'_d$ and  $\sum_{\ii=m}^{M}$ stands for $\sum_{i_{1}=m}^{M}\ldots\sum_{i_{d}=m}^{M}.$
We study the remainder term $R_{\delta}:=\sigma_{d}(T)\left(\hat C^{(\delta,T)}_{d}(u)-C^T_{d}(u)\right)$ given by
\begin{align*}
R_{\delta}=\int_{T}\mathbbm{1}_{\{X(t)\ge u\}}\rmd t-\sum_{t \in \mathbb{G}(\delta, T)}\mathbbm{1}_{\{X(t)\ge u\}}\delta^{{d}}&=\sum_{\ii =-N}^{N-1}  \int_{V_{\ii}(\delta)}\Big(\mathbbm{1}_{\{X(t)\ge u\}}-\mathbbm{1}_{\{X(t_{\ii, \delta})\ge u\}}\Big)\rmd t,
\end{align*}
from Equation~\eqref{eq:C2pixel}.  Denote $d_{\ii}(t):=\mathbbm{1}_{\{X(t)\ge u\}}-\mathbbm{1}_{\{X(t_{\ii, \delta})\ge u\}}$,  for $t \in V_{\ii}(\delta)$. Note that by stationarity this term is centered and $\V(R_{\delta})=\E[R_{\delta}^{2}].$
  We  write,
\begin{align*}
\E[R_{\delta}^{2}]&=\sum_{\ii, \ii' =-N}^{N-1} \E\left[\int_{V_{\ii}(\delta)}\int_{V_{\ii'}(\delta)}d_{\ii}(t)d_{\ii'}(t')\rmd t\,\rmd t'\right],
\end{align*}
where $d_{\ii}(t):=\mathbbm{1}_{\{X(t)\ge u\}}-\mathbbm{1}_{\{X(t_{\ii, \delta})\ge u\}}$,  for $t \in V_{\ii}(\delta)$.
We now study the quantities
\begin{align*}
\E[d_{\ii}(t)d_{\ii'}(t')]&=\E\big[(\mathbbm{1}_{\{X(t)\ge u\}}-\mathbbm{1}_{\{X(t_{\ii, \delta})\ge u\}})(\mathbbm{1}_{\{X(t')\ge u\}}-\mathbbm{1}_{\{X(t_{\ii', \delta})\ge u\}})\big] \\
&=\Cov(\mathbbm{1}_{\{X(t-t')\ge u\}},\mathbbm{1}_{\{X(\0)\ge u\}})+\Cov(\mathbbm{1}_{\{X(t_{\ii, \delta}-t_{\ii', \delta})\ge u\}},\mathbbm{1}_{\{X(\0)\ge u\}})\\
&\hspace{1cm}-\Cov(\mathbbm{1}_{\{X(t-t_{\ii', \delta})\ge u\}},\mathbbm{1}_{\{X(\0)\ge u\}})-\Cov(\mathbbm{1}_{\{X(t_{\ii, \delta}-t')\ge u\}},\mathbbm{1}_{\{X(\0)\ge u\}}),
\end{align*}
for $t\in V_{\ii}(\delta),\ t'\in V_{\ii'}(\delta)$.
To connect the later quantity to the covariance function of $X$ we rely on Lemma 7.3.5.  in \cite{alexander2007limit} (see Lemma~\ref{LemmaCOV} where a revised proof is provided for sake of clarity).  Then, we obtain,
\begin{align*}|\E[d_{\ii}(t)d_{\ii'}(t')]|&\le K_{M}\big(|r(t-t')|^{1/3}+|r(t_{\ii, \delta}-t_{\ii', \delta})|^{1/3}+|r(t-t_{\ii', \delta})|^{1/3}+|r(t'-t_{\ii, \delta})|^{1/3}).\end{align*}  
Under Assumption~\ref{A2Supp} and using the structure of the approximating grid, it follows that
\begin{align}
\label{eq:dij1}\E[d_{\ii}(t)d_{\ii'}(t')]&\le 4\,\sigma^{2/3}K_{M}
\big(1+\|t_{\ii', \delta}-t_{\ii + \1, \delta}\|\big)^{-\gamma/3}\quad \mbox{if } {\ii'>\ii}.
\end{align}

The latter bound gets small whenever ${\rm dist}(t_{\ii+\1, \delta},t_{\ii'+\1})$ is large. Hereafter, we propose another control of this term when this distance is small.
It holds that \begin{align*}\mbox{for } t\in V_{\ii}(\delta), \quad d_{\ii}(t)&= \begin{cases}
1 & \mbox{if}\ X(t_{\ii, \delta}) \leq u < X(t),\\
-1 & \mbox{if}\ X(t)\leq u < X(t_{\ii, \delta}),\\
0 & \mbox{otherwise}.
\end{cases}
\end{align*}
It follows from the hypothesis of stationarity that $\E[d_{\ii}(t)]\le \PP(|d_{\ii}(t)|=1)= \PP\big(X(t_{\ii, \delta}) \leq u < X(t)\big) + \PP\big(X(t) \leq u < X(t_{\ii, \delta})\big)$.
By Lemma~\ref{lem:crossing_bound} we obtain that
\begin{align*}
 \PP\big(X(\0) \leq u < X(t)\big)&\le C_{d-1}^*(u)\|t\|_2 \le  \sqrt{d}\delta C_{d-1}^*(u),
\end{align*}
Combining with  the definition of $d_{\ii}(t)$ and the Cauchy-Schwarz inequality provides
\begin{align}
\label{eq:dij2}\E[d_{\ii}(t)d_{\ii'}(t)]\le { \sqrt{\E[d_{\ii}(t)^{2}]\E[d_{\ii'}(t')^{2}]}}\le {4\sqrt{d}\delta}C_{d-1}^*(u).
\end{align}

Therefore, using~\ref{A0} we derive that
\begin{align*}
\V(R_{\delta})&=\left(\sum_{{\ii = \ii'}=-N}^{N-1}+ {2^{d}}\sum_{{\ii' \ge \ii = -N}\atop{{\ii \ne \ii'}}}^{N-1}\right)\int_{V_{\ii}(\delta)}\int_{V_{\ii'}(\delta)}\E\left[d_{\ii}(t)d_{\ii'}(s)\right]\rmd t\, \rmd s,
\end{align*}
Recall that $\sigma_d(T_{N})= (2 \delta N)^{d}$  and that $|d_{\ii}|\le 1$ \emph{a.s.}. Combining with ~\eqref{eq:dij1} and~\eqref{eq:dij2}, we obtain
\begin{align*}
\V(R_{\delta})&\le\sum_{\ii =-N}^{N-1}\delta^{ {2d}}+ 2^{d}\delta^{2d}
\sum_{{\ii' \ge \ii =-N}\atop{{\ii \ne \ii'}}}^{N-1}\frac{4\upsilon^{2/3}K_{M}}{\big(1+\|t_{\ii', \delta}-t_{\ii+\1, \delta}\|\big)^{\gamma/3}}\wedge \big\{{4\sqrt{d}\delta}C_{d-1}^*(u)\big\} \\
&=\delta^{d}\sigma_{d}(T_{N})+ \kappa \delta^{d}\frac{\sigma_{d}(T_{N})}{N^{d}}
\sum_{{\ii' \ge \ii =-N}\atop{{\ii \ne \ii'}}}^{N-1}\{(1+\|t_{\ii', \delta}-t_{\ii+\1, \delta}\|)^{-\gamma/3}\wedge \delta\},
\end{align*}
where $\kappa$  is a constant depending on $({d},M,\upsilon^{2},C^{*}_{d-1}(u))$.  As  for all $(\ii,\ii')$  it holds  {$\|t_{\ii', \delta}-t_{\ii, \delta}\|^{2}=\delta_{N}^{2}\sum_{k=1}^{d}(i'_{k}-i_{k})^{2}$},  we get
\begin{align*}
\frac{1}{N^{d}}&\sum_{{\ii' \ge \ii =-N}\atop{{\ii \ne \ii'}}}^{N-1}\left\{(1+\|t_{\ii', \delta}-t_{\ii+\1, \delta}\|)^{-\gamma/3}\wedge \delta_{N}\right\}
= \frac{1}{N^{d}}\sum_{{\ii' \ge \ii =-N}\atop{{\ii \ne \ii'}}}^{N-1}\left\{\left(1+\delta\sqrt{\sum_{k=1}^{d}(i'_{k}-i_{k}-1)^{2}}\right)^{-\gamma/3}\wedge\delta\right\}\\
&=\sum_{\ell_{1},\ldots,\ell_{d}=1}^{N-1}\left\{\left(1+\delta\sqrt{\sum_{k=1}^{d}\ell_{k}^{2}}\right)^{-\gamma/3}\wedge\delta\right\}.
\end{align*}
To bound this term, we use the following upper bound,
\[\left(1+\delta\sqrt{\sum_{k=1}^{d}\ell_{k}^{2}}\right)^{-\gamma/3}\wedge\delta\le \begin{cases}
\delta & \mbox{if }\sqrt{\sum_{k=1}^{d}\ell_{k}^{2}}\le \left\lfloor \frac{1}{\delta} \right\rfloor,\\
\left(1+\delta\sqrt{\sum_{k=1}^{d}\ell_{k}^{2}}\right)^{-\gamma/3}&\mbox{otherwise.}
\end{cases}\]
We derive that
\begin{align*}
\sum_{\ell_{1},\ldots,\ell_{d}=1}^{N-1}\left\{\left(1+\delta\sqrt{\sum_{k=1}^{d}\ell_{k}^{2}}\right)^{-\gamma/3}\wedge\delta\right\}\le& \delta\mbox{Card}\Big\{\ell_{1},\ldots,\ell_{d}\in\{1,\ldots,N-1\}^{d},\ \sqrt{\sum_{k=1}^{d}\ell_{k}^{2}}\le \Big\lfloor \frac{1}{\delta} \Big\rfloor\Big\}\\
&+\sum_{{\ell_{1},\ldots,\ell_{d}=1}\atop{ \sqrt{\sum_{k=1}^{d}\ell_{k}^{2}}>\lfloor 1/{\delta}\rfloor} }^{N-1}\left(1+\delta\sqrt{\sum_{k=1}^{d}\ell_{k}^{2}}\right)^{-\gamma/3}=:S_{1}+S_{2}.
\end{align*}
Furthermore, $S_{1}\le \delta\lfloor \delta^{-1}\rfloor^{d}\le \delta^{1-d}$ and
\begin{align*}
S_{2}&=\sum_{{\ell_{1},\ldots,\ell_{d}=1}\atop{ \sqrt{\sum_{k=1}^{d}\ell_{k}^{2}}>\lfloor 1/{\delta_{N}}\rfloor} }^{N-1}\left(1+\delta\sqrt{\sum_{k=1}^{d}\ell_{k}^{2}}\right)^{-\gamma/3}=\sum_{{p_{1},\ldots,p_{d}=0}\atop{ \|\mathbf{p}\|> {1} }}^{\lfloor(N-1)\delta\rfloor}(1+\|\mathbf{p}\|)^{-\gamma/3},
\end{align*}
where $S_{2}=O(1)$\footnote{Indeed, using that in $\R^{d}$ there exists $0<c_{d}< C_{d}$ such that $c_{d}\|p\|_{1}\le\|p\|_{2}\le C_{d}\|p\|_{1}$ we obtain \begin{align*}
\sum_{\mathbf{p}\ge0}\frac{1}{(1+\|\mathbf{p}\|)^{\alpha}}&\le \sum_{\mathbf{p}\ge0}\frac{1}{(1+c_{d}\|\mathbf{p}\|_{1})^{\alpha\sum_{r\ge 0}\quad\sum_{\mathbf{p}\ge 0, \|\mathbf{p}\|_{1}=r}\frac{1}{(1+r)^{\alpha}}}}=\sum_{r\ge 0}\frac{K_{d}(r)}{(1+r)^{\alpha}},
\end{align*} where $K_{d}(r):=\mbox{Card}\{\mathbf{p}\ge 0, \|\mathbf{p}\|_{1}=r\}$. An immediate upper bound is given by $K_{d}(r)\le r^{d-1}$, which lead to the sufficient condition $\alpha>d$ to get the convergence of the sum. Note that this condition should be necessary, to prove this one should use similar computations, $\|p\|_{2}\le C_{d}\|p\|_{1}$ and show (most delicate part) that $K_{d}(r)\ge c r^{d-1}$ for some $1>c>0.$} whenever $\gamma>{3d}$ if $N\delta\to \infty$ and for every $\gamma$ if $N\delta$ remains bounded.
Gathering all these elements, we obtain (assuming additionally that $\gamma> 3d$ if $N\delta\to \infty$),
\begin{align*}
\V(R_{\delta})&\le {\delta^{d}}{\sigma_d(T_{_{N}})}+ \,\kappa\,{\delta^{d}}\sigma_d(T_{N)})\big(\delta^{1-d}+S_{2}\big)=\kappa \, {\delta}{\sigma_d(T_{N})}+O\left({\delta^{d}}{\sigma_d(T_{N})}\right).
\end{align*}
 It follows that, $\V(R_{\delta})\le  \kappa \delta\sigma_d(T_N),$ where $\kappa$ is a constant depending on  $d,\,M,\,\upsilon^{2}, \, C^{*}_{d-1}(u)$ and $\gamma$ only. Dividing by $\sigma_{d}(T)^{2}$ completes the proof using that $\V(R_{\delta})=\sigma_{d}(T)^{2}\V\big(C_{d}^{T}(u)-\hat C_{d}^{(\delta,T)}(u)\big)$.
\end{proof}

\begin{Lemma}\label{LemmaCOV}
For a quasi-associated random vector $(U,V)$ such that $U$ and $V$ are square integrable and with densities bounded by $M>0.$ Then, for all $(u,v)\in \R$ it holds \begin{align*}
|{\Cov}(\mathbbm{1}_{U\ge u},\mathbbm{1}_{V\ge v})|\le 3\, a^{2/3}\, |\Cov(U,V)|^{1/3} := K_{M}|\Cov(U,V)|^{1/3}.
\end{align*}
\end{Lemma}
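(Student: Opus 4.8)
The plan is to replace the indicator functions $\I{U\ge u}$ and $\I{V\ge v}$ by Lipschitz approximations, apply the quasi-association inequality to the smoothed versions, control the approximation error through the bounded-density hypothesis, and finally optimize the smoothing scale; one then checks that the resulting constant is exactly $3M^{2/3}$, so that the stated bound holds with $a=M$.

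First I would fix a scale $h>0$ and set $f_h(x) := \min\big(1,\max(0,\,1+(x-u)/h)\big)$ and $g_h(x) := \min\big(1,\max(0,\,1+(x-v)/h)\big)$. Each of these is $1/h$-Lipschitz, takes values in $[0,1]$, dominates the corresponding indicator pointwise ($f_h\ge \I{\cdot\ge u}$, $g_h\ge \I{\cdot\ge v}$), and coincides with it outside the intervals $(u-h,u)$ and $(v-h,v)$ respectively. Hence the differences $\Delta^U := \I{U\ge u}-f_h(U)$ and $\Delta^V := \I{V\ge v}-g_h(V)$ take values in $[-1,0]$, and because $U,V$ have densities bounded by $M$, $\E|\Delta^U|\le \P(u-h\le U\le u)\le Mh$ and likewise $\E|\Delta^V|\le Mh$.

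Next I would use the decomposition
$$\Cov\big(\I{U\ge u},\I{V\ge v}\big) = \Cov\big(f_h(U),g_h(V)\big) + \Cov\big(\Delta^U,\I{V\ge v}\big) + \Cov\big(f_h(U),\Delta^V\big).$$
The first term is bounded by $\mathrm{Lip}(f_h)\,\mathrm{Lip}(g_h)\,|\Cov(U,V)| = |\Cov(U,V)|/h^2$ by quasi-associativity. For the second term, since $\Delta^U\in[-1,0]$ and $\I{V\ge v}\in[0,1]$, both $\E[\Delta^U\I{V\ge v}]$ and $\E[\Delta^U]\,\E[\I{V\ge v}]$ lie in $[-\E|\Delta^U|,0]$, so $|\Cov(\Delta^U,\I{V\ge v})|\le \E|\Delta^U|\le Mh$; symmetrically $|\Cov(f_h(U),\Delta^V)|\le Mh$.

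Combining, $|\Cov(\I{U\ge u},\I{V\ge v})|\le |\Cov(U,V)|/h^2 + 2Mh$ for every $h>0$. If $\Cov(U,V)=0$, letting $h\to 0$ shows the left side vanishes; otherwise, taking $h=(|\Cov(U,V)|/M)^{1/3}$ minimizes the right side and gives $|\Cov(U,V)|^{1/3}M^{2/3}+2M^{2/3}|\Cov(U,V)|^{1/3} = 3M^{2/3}|\Cov(U,V)|^{1/3}$, which is the claim with $a=M$. The one place demanding care — and the reason the constant is $3$ rather than the weaker $3\cdot 2^{2/3}$ — is the middle step: the crude bound $|\Cov(\Delta^U,\I{V\ge v})|\le 2\E|\Delta^U|$ is too lossy, so it is essential that $f_h$ was chosen to dominate $\I{\cdot\ge u}$ pointwise, making $\Delta^U$ one-signed and permitting the sharper estimate $|\Cov(A,B)|\le \E|A|$ valid whenever $A\le 0$ and $0\le B\le 1$. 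Everything else (the Lipschitz constants, the density estimate, the one-variable optimization) is routine, so I do not expect a genuine obstacle here.
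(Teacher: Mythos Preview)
Your proposal is correct and follows essentially the same approach as the paper: Lipschitz approximation of the indicators, quasi-association on the smoothed term, density bound on the remainder terms, and optimization in the smoothing parameter. Your treatment is in fact slightly cleaner than the paper's: you handle the degenerate case $\Cov(U,V)=0$ by simply letting $h\to 0$ (the paper instead invokes an independence result for quasi-associated vectors with vanishing covariance), and you make explicit the one-signedness argument $\Delta^U\le 0,\ 0\le B\le 1 \Rightarrow |\Cov(\Delta^U,B)|\le \E|\Delta^U|$ that is needed to reach the constant $3$ --- the paper's proof text records the looser bound $4M\delta$ for the two error terms together (which would yield $5M^{2/3}$) while stating the final constant $3M^{2/3}$, so your care here is well placed.
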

Lemma~\ref{LemmaCOV} is a  slight modification of  Lemma 7.3.5.  in \cite{alexander2007limit} and for sake of clarity,  we provide below a    proof.

\begin{proof}[Proof of Lemma~\ref{LemmaCOV}] If $\Cov(U,V)=0$, by Corollary 1.5.5  in \cite{alexander2007limit} implies that $U$ and $V$ are independent and the bound is trivial. Else for any $\delta >0$ and the function $h_{\delta,u}$ for $u\in\R$ such that
\[h_{\delta,u}(t)=\begin{cases}
0 & \mbox{if }t\le u-\delta,\\
\frac1\delta\big(t-(u-\delta)\big) & \mbox{if }t\in (u-\delta,u),\\
1 & \mbox{if }t\ge u.
\end{cases}\]
It follows from the triangle inequality that \begin{align*}
\big|\Cov(\mathbbm{1}_{U\ge u},\mathbbm{1}_{V\ge v})\big|
&\le \big|\Cov(h_{\delta,u}(U),h_{\delta,v}(V))\big|+\big|\Cov(\mathbbm{1}_{U\ge u}-h_{\delta,u}(U),h_{\delta,v}(V))\big|\\ &\quad\quad+\big|\Cov(\mathbbm{1}_{U\ge u},\mathbbm{1}_{V\ge v}-h_{\delta,v}(V))\big|\\
&\le \delta^{-2}|\Cov(U,V)|+4 M \delta,
\end{align*}   where the last inequality follows from Theorem 1.5.3 of \cite{alexander2007limit} to control the first term, the last two terms are controlled using that $\mathbbm{1}_{U\ge u}-h_{\delta,u}(U)$ is nonzero for $U\in(u,u-\delta)$ and that the density of $U$ is bounded by $M$.  Minimizing in $\delta$ this last term, we find   $\delta^{*}=\big(|\Cov(U,V)|/M\big)^{1/3}$. Then
\begin{align*}
\big|\Cov(\mathbbm{1}_{U\ge u},\mathbbm{1}_{V\ge v})\big|&\le \big(|\Cov(U,V)|/M\big)^{-2/3}|\Cov(U,V)|+4\,M\,\big(|\Cov(U,V)|/M\big)^{1/3}\\
&= 3\, M^{2/3}\, |\Cov(U,V)|^{1/3}.
\end{align*}
\end{proof}
Notice that in Lemma~\ref{LemmaCOV}, the factor 4 in Lemma 7.3.5.  in \cite{alexander2007limit}   is   improved to 2   as $-1\le  \mathbbm{1}_{U\ge u}-h_{\delta,u}(U)\le 1$ \emph{a.s}.\\

Proposition~\ref{prop:AireRDimensionD} can be used to transfer for instance multi-level central limit theorem provided for  ${C}_d^{T}(u)$  in~\eqref{eq:UestCT2} to $\widehat{C}_d^{(\delta,T)}(u)$  in~\eqref{eq:C2pixel} (see  Corollary~\ref{CLT} below).

\begin{Corollary}\label{CLT}\begin{sloppypar}
Let $X$  a  random field  satisfying assumptions in Proposition~\ref{prop:AireRDimensionD}. Let $\uu=(u_1, \ldots, u_m)^t \in \R^m$, $C_d^{*}(\uu) = \left(\PP(X(\0) \geq u_1), \ldots, \PP(X(\0) \geq u_m)\right)^t$ (see~\eqref{areadensity}) and $\widehat{C}_d^{(\delta,T)}(\uu)  = \left(\widehat{C}_d^{(\delta,T)}(u_1) , \ldots,    \widehat{C}_d^{(\delta,T)}(u_m)\right)^t$ (see Equation~\eqref{eq:C2pixel}). Then it holds that \end{sloppypar}
\begin{align}\label{TCLC2tilde}
\sigma_{d}(T)^{1/2} \, \big(\widehat{C}_d^{(\delta,T)}(\uu) - C_d^{*}(\uu)\big)  \stackrel{\mathcal{L}}\longrightarrow \mathcal{N}(0, \Sigma(\uu)),
\end{align}
as $N{\delta}\to\infty$ and $\delta\to0$, where $\Sigma(\uu) =\left(\upsilon_{ij}(\uu)\right)_{i, j = 1}^m$  is an ($m \times m$)-matrix having the elements
\begin{align}\label{CovarianceBulinski}
\upsilon_{ij}(\uu) = \int_{\R^d}   \Cov(\mathbbm{1}_{\{X(\0)\ge u_i\}},\mathbbm{1}_{\{X(t)\ge u_j\}})  \rmd t.\end{align}
\end{Corollary}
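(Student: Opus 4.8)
The plan is to transfer the known multi-level central limit theorem for the continuously observed volume $C_d^T(\uu) = (C_d^T(u_1),\ldots,C_d^T(u_m))^t$ (see~\eqref{eq:UestCT2}) to its discretized counterpart $\widehat{C}_d^{(\delta,T)}(\uu)$ (see~\eqref{eq:C2pixel}) by showing that the two vectors differ by a quantity that is negligible at the scale $\sigma_d(T)^{1/2}$, and then applying Slutsky's theorem. First I would invoke the classical result of \cite{bulinski2012central} (see also \citet{Spodarev13}): under the quasi-association assumption~\ref{A1Supp}, the boundedness of the density of $X(\0)$ by $M$, and the covariance-decay condition, the vector $C_d^T(\uu)$ satisfies $\sigma_d(T)^{1/2}\big(C_d^T(\uu) - C_d^*(\uu)\big) \stackrel{\mathcal{L}}\longrightarrow \mathcal{N}(0,\Sigma(\uu))$ as $N\delta\to\infty$, with $\Sigma(\uu)$ given by~\eqref{CovarianceBulinski}; the decay exponent $\gamma > 3d$ assumed in Proposition~\ref{prop:AireRDimensionD} is more than enough for that theorem to apply. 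Along the way I would record that $\E[C_d^T(u_i)] = \PP(X(\0) \ge u_i) = C_d^*(u_i)$ by stationarity, cf.~\eqref{areadensity}.

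Second, I would control the discretization error coordinatewise using Proposition~\ref{prop:AireRDimensionD}. Under the constriction $T = [-\delta N,\delta N]^d$ one has $\bigcup_{P\in(\delta\mathcal{G})^T}P = T$, so by~\eqref{eqbiaisarea} the estimator $\widehat{C}_d^{(\delta,T)}(u_i)$ is unbiased, $\E[\widehat{C}_d^{(\delta,T)}(u_i)] = C_d^*(u_i)$; hence $\sigma_d(T)^{1/2}\big(\widehat{C}_d^{(\delta,T)}(u_i) - C_d^T(u_i)\big)$ is centered, and by~\eqref{eq:VdR} its variance is at most $\sigma_d(T)\cdot\kappa\delta/\sigma_d(T) = \kappa\delta$, which tends to $0$ as $\delta\to 0$. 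Summing over the $m$ coordinates (triangle inequality, or Cauchy--Schwarz for the Euclidean norm) gives $\sigma_d(T)^{1/2}\big(\widehat{C}_d^{(\delta,T)}(\uu) - C_d^T(\uu)\big) \to 0$ in $L^2$, hence in probability, as $\delta\to 0$. Combining the two steps with Slutsky's theorem (together with the Cram\'er--Wold device if one argues along one-dimensional projections) yields~\eqref{TCLC2tilde}; since the error term is negligible, the limiting covariance is unchanged, equal to $\Sigma(\uu)$ as in~\eqref{CovarianceBulinski}.

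The only genuinely delicate point is verifying that the hypotheses at hand entitle us to the multi-level CLT for the continuous volume $C_d^T(\uu)$: one must match assumptions~\ref{A1Supp} and~\ref{A2Supp} (and the bounded density of $X(\0)$) to the precise statement of the theorem of \cite{bulinski2012central}, checking in particular that $\gamma > 3d$ suffices and that the positivity of the limiting covariance matrix is part of the hypotheses (as in our statement). Everything else is a routine $L^2$-negligibility plus Slutsky argument, with Proposition~\ref{prop:AireRDimensionD} doing all the quantitative work.
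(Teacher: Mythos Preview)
Your proof is correct and follows essentially the same route as the paper: decompose $\sqrt{\sigma_d(T)}\big(\widehat{C}_d^{(\delta,T)}(\uu)-C_d^*(\uu)\big)$ into the continuous-observation CLT term handled by Theorem~2 of \cite{bulinski2012central} and a centered discretization remainder whose variance is $O(\delta)$ by Proposition~\ref{prop:AireRDimensionD}, then conclude by Slutsky. The only superfluous ingredient is the Cram\'er--Wold device, which is not needed since Slutsky applies directly in $\R^m$.
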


\begin{proof}[Proof of Corollary~\ref{CLT}]\begin{sloppypar}
Recall that  $\sigma_d(T_{N}) = (2 N \delta)^d$ and ${C}_d^{T}(\uu)   =  \widehat{C}_d^{ (\delta,T)}(\uu) +  R_\delta(\uu),$
where $R_{\delta}(\uu) := (R_{\delta}(u_1), \ldots, R_{\delta}(u_m))$
with\end{sloppypar}
$$R_{\delta}(u_k) := \frac{1}{\sigma_d(T_{N})} \sum_{\ii =-N}^{N-1} \int_{V_{\ii}(\delta)}\Big(\mathbbm{1}_{\{X(t)\ge u_k\}}-\mathbbm{1}_{\{X(t_{\ii, \delta})\ge u_k\}}\Big)\rmd t,\quad \mbox{for}\ \ k \in \{1, \ldots, m\}.$$
Then,
\begin{align*}
\sqrt{\sigma_d(T_{{N}})}\, \big(\widehat{C}_d^{(\delta,T)}(\uu) - C_d^{*}(\uu)\big)
 & = \sqrt{\sigma_d(T_{{N}})} \,({C}_d^{T}(\uu) - C_d^{*}(\uu)) - \sqrt{\sigma_d(T_{{N}})}  R_{\delta}(\uu)\\
&:= \textbf{I}_{1, N}(\uu) -   \textbf{I}_{2, N}(\uu).
\end{align*}
Notice that, since $N\delta_{N}\to \infty$,  $T_{{N}}$ is a VH-growing sequence (see Definition 6 in \cite{bulinski2012central}).
Then, using  Theorem  2   in \cite{bulinski2012central},  it holds that $\textbf{I}_{1, N}(\uu) \overset{\mathcal{L}}{\underset{N\rightarrow\infty}{\longrightarrow}} \mathcal{N}(0, \Sigma(\uu))$, with $\overset{\mathcal{L}}{\underset{ }{\longrightarrow}}$ the convergence in law and  where
 $\Sigma(\uu) =\left(\upsilon_{ij}(\uu)\right)_{i, j = 1}^m$  is an ($m \times m$)-matrix having the elements
$\upsilon_{ij}(\uu)$ as in~\eqref{CovarianceBulinski}.
As for $k \in \{1, \ldots, m\}$, $\E[R_\delta(u_k)]=0$, $\V(R_{\delta}(u_k))=o(\sigma_d(T_{{N}})^{-1})$,   from  Proposition~\ref{prop:AireRDimensionD}, then $\textbf{I}_{2, N}(\uu)  \overset{\PP}{\underset{N\rightarrow\infty}{\longrightarrow}} 0$. By  Slutsky's Theorem,   we get the result in~\eqref{TCLC2tilde}.
\end{proof}

\end{appendix}

\subsection{Examples}\label{sec:annexeExamples}

We begin with two simple examples that illustrate the use of the  Crofton formula in Equation~\eqref{eqn:crofton}.

\begin{Example}
Let $d=2$, Equation~\eqref{eqn:crofton} takes the form:
\begin{equation}\label{eqn:crofton_2d}
\sigma_1(M) = \frac{1}{4} \int_{-\infty}^\infty\int_0^{2\pi} \sigma_0(M \cap l_{\mathbf{s}(\theta),v})\ \rmd \theta\ \rmd v,
\end{equation}
where $\mathbf{s}(\theta):= (\cos(\theta),\sin(\theta))$. Let $M$ be a circle of radius $R$ in $\R^2$. For all $\theta\in[0,2\pi)$, the function $f_{\theta}(v):= \sigma_0(M \cap l_{\mathbf{s}(\theta),v})$ is equal to 2 on an interval of length $2R$, and 0 elsewhere. Therefore, we easily  recover
\begin{align*}
\sigma_1(M) &= \frac{1}{4} \int_0^{2\pi} \int_{-\infty}^\infty f_{\theta}(v)\ \rmd v\ \rmd \theta
= \frac{1}{4} \int_0^{2\pi} 4R\ \rmd \theta
= 2\pi R.
\end{align*}
\end{Example}

\begin{Example}
Let $M$ be the boundary of a a square $K$ with side-length $a$. As $\partial K = \bigcup_{i=1}^4 \mbox{side}_i$, using the additivity of~\eqref{eqn:crofton_2d}, it suffices to consider only a single side of the square. Without loss of generality, let us suppose that $\mbox{side}_1$ is horizontal. Then, the function $f_{\theta}(v):= \sigma_0(\mbox{side}_1 \cap l_{{\mathbf{s}(\theta)},v})$ is equal to 1 on an interval of length $a|\sin \theta|$, and 0 elsewhere. Thus, we easily   recover that
\begin{align*}
\sigma_1(\mbox{side}_1) &= \frac{1}{4} \int_0^{2\pi} \int_{-\infty}^\infty f_{\theta}(v)\ \rmd v\ \rmd \theta
= \frac{1}{4} \int_0^{2\pi} a|\sin\theta|\ \rmd \theta
= \frac{a}{2} \int_0^{\pi} \sin\theta\ \rmd \theta
= a.
\end{align*}
\end{Example}

We now present two classical examples where the density $C_{d-1}^{*}(u)$ in \eqref{defCast}
can be explicitly obtained.  The interested reader is referred to Exercises 6.2.c and 6.3 in \cite{AW09} and to~\cite{bierme2018lipschitz}.

\begin{Example}\label{ModelsGaussianType1}
Let   $X = \{X(t),  t \in \R^d\}$,   be an isotropic  Gaussian field,  with zero mean, unit variance and second  spectral moment $\lambda$ finite satisfying Assumption~\ref{A0}.   Then   we get
\begin{align*}
C_{d-1}^{*}(u) =  \sqrt{\frac{\lambda}{\pi}}e^{-\frac{u^{2}}2}\frac{\Gamma\left(\frac{d+1}{2}\right)}{\Gamma\left(\frac{d}{2}\right)}.
\end{align*}
\smallskip

Let $X^{(K)}=  \{X^{(K)}(t), t \in \R^d\}$ be   an isotropic   chi-square  field    satisfying Assumption~\ref{A0}  with  $K \in \N^+$ degree of freedom such that $X^{(K)}(t) = X_1(t)^2+ \ldots + X_K(t)^2$ where $X_1(t), \ldots, X_K(t)$  are $K$  \emph{i.i.d.}  stationary isotropic Gaussian random fields defined as above. Then one can get
\begin{align*}
C_{d-1}^{*}(u)= \sqrt{\lambda} \left(\frac u2\right)^{\frac{K-1}{2}}e^{-\frac u2} \frac{\Gamma\left(\frac{ d+1}2\right)}{\Gamma\left(\frac K2\right)\Gamma\left(\frac d2\right)}.
\end{align*}
 \end{Example}

\subsection{Convergence of the bias factor}\label{sec:convergence_of_bias}
In this appendix, we provide a short justification for Equation~\eqref{eqn:bias_converge}.
The Crofton formula in~\eqref{eqn:crofton} applied to a random manifold $M$ can be written in terms of the conditional  expectation,
\begin{equation}\label{eqn:conditional_crofton}\sigma_{d-1}(M) = \frac{\sqrt{\pi}\ \Gamma(\frac{d+1}{2})}{\Gamma(\frac{d}{2})}
\int_{\R^{d-1}}\int_{\partial B^d_1} \frac{\E\big[\sigma_0(M\cap l_{\mathbf{s},\mathbf{v_\mathbf{s}(u)}})\ |\ \sigma_{d-1}(M)\big]}{\sigma_{d-1}(\partial B^d_1)}\ \rmd \mathbf{s}\ \rmd\mathbf{u},\ \as.
\end{equation}
By slightly modifying the proof of Theorem~\ref{prp:Pcrossing_Esurfacearea} so as to leverage~\eqref{eqn:conditional_crofton}, one obtains
$$\frac{1}{q}\P\big(X(\0) \leq u < X(q\be_1)\ |\ C^T_{d-1}(u)\big) \overset{L^1}{\underset{q\rightarrow 0}{\longrightarrow}} \frac{C^T_{d-1}(u)}{\beta_d}.$$
Likewise, by slightly adjusting the proof of Theorem~\ref{thm:honeycomb} accordingly, one writes
$$\E\big[\widehat C^{(\delta \dot{\mathcal{H}},T)}_{d-1}(u)\ |\ C^T_{d-1}(u)\big]\overset{L^1}{\underset{\delta\rightarrow 0}{\longrightarrow}}\frac{2d}{\beta_d}C^T_{d-1}(u).$$
Equation~\eqref{eqn:bias_converge} follows.

\subsection{Alternative approaches}\label{DimensionalConstantSQUARE}

\subsubsection*{Further elements on the dimensional constant $\beta_{d}$}

Here, we provide an alternative justification for the value of the bias factor $\frac{2d}{\beta_d}$ in~\eqref{eqn:honeycomb_deterministic} for the simple case of the hypercubic lattice by adapting in any dimension $d$ the methodology proposed for instance by \citet{miller1999}.

Remark that the $(d-1)$-dimensional surface $L_X(u)$ is $C^1$, and so it can be approximated arbitrarily well by a union of $(d-1)$-dimensional hyperplanar surfaces. Then, the expectation of $\widehat C_{d-1}^{(\delta\dot{\mathcal{G}},T)}$  in~\eqref{eq:C1tildeNEW} is linear over these hyperpanar surfaces. Thus, it suffices to consider the bias in the estimate of the $\sigma_{d-1}$ measure of a single hyperplanar surface with orientation vector distributed uniformly on $\partial B_1^d$. It is not difficult to see that the contribution to $\widehat C_{d-1}^{(\delta\dot{\mathcal{G}},T)}$ of a hyperplanar surface with orientation vector $\textbf{r}\in\partial B_1^d$ is $||\textbf{r}||_1$ times its $\sigma_{d-1}$ measure. Thus, the expected bias factor should be the average value of $||\textbf{r}||_1$, when $\textbf{r}$ is distributed uniformly on $\partial B_1^d$. Indeed,
\begin{equation}\label{constantbetad}
\frac{1}{\sigma_{d-1}(\partial B_1^d)} \int_{\partial B_1^d} ||\textbf{r}||_1\, \sigma_{d-1}(\rmd \textbf{r}) = \frac{2d}{\beta_d}.
\end{equation}

\subsubsection*{Building another approximation for the $\sigma_{d-1}$ measure}

We discuss shortly an alternative approach for the estimation of the considered surface area.  However it is not well adapted to our framework since it is not  accessible from the sole knowledge of the field $X$ on the grid $\mathbb G(\delta, T)$.  The interested reader is also referred to \cite{bierme:hal-02793752} (see in particular Equation (4.5) and Figure 4.2) in the case $d=2$.\\

A first idea, following definition   of the $d-1$ dimensional Haussdorff measure  in~\eqref{deltacovering}, and the fact the   considered lattice is a grid, is to consider the hypercubic cover  of $T$ induced by the grid  $\mathbb G$  in~\eqref{G} (see Section~\ref{hypercubic:section}).  To this end,   define $J^{(\delta)}(u):=\left\{\ii \in \Z^d:  V_{\ii}(\delta) \subseteq T,\ V_{\ii}(\delta) \cap L^{T}_X(u)\ne \emptyset \right\}$  and  introduce
\begin{align}\label{eq:C1overline}
\check{C}^T_{d-1}(u) := \frac{a(\delta) }{\sigma_d(T)}  (\sqrt{d} \delta)^{d-1}
, \quad \mbox{ where } \quad a(\delta) := \mbox{Card}\left(J^{(\delta)}(u)\right).
\end{align}
Indeed, $L_X^T(u) \subset \bigcup_{\ii \in J^{(\delta)}(u)} V_{\ii}(\delta)$ and $\mbox{diam}(V_{\ii}(\delta)) = \sqrt{d}\delta$ for all $\ii \in J^{(\delta)}(u)$. So $\bigcup_{\ii  \in J^{(\delta)}(u)} V_{\ii}(\delta)$ is a $(\sqrt{d} \delta)$-cover of $L^T_X(u)$ in the sense of~\eqref{deltacovering}.
The random quantity $a(\delta)$  in~\eqref{eq:C1overline} represents the number of cells of the grid whose intersection with the level set $L_X^T(u)$ is non empty and it  can be rewritten as follows
 \begin{align*}
 a(\delta)&=\sum_{\ii = - N}^{N-1}\mathbbm{1}_{\big\{\underset{s\in V_{\ii}(\delta)}{\min}X({s}) \leq  u < \underset{s\in V_{\ii}(\delta)}{\max}X({s})\big\}}.
 \end{align*}

This object is not empirically accessible from the discretized excursion set as it requires the knowledge of the field on $T$. For these reasons,  this quantity is not a good candidate to estimate $C_{d-1}$ in our sparse information setting, \emph{i.e.}, when we have the   sole observation of $X$ on $\mathbb G(\delta, T)$.

\end{document}